\newtheorem{theorem}{Theorem}[section]
\newtheorem{prop}[theorem]{Proposition}
\newtheorem{lemma}[theorem]{Lemma}
\newtheorem{corollary}[theorem]{Corollary}
\newtheorem{definition}[theorem]{Definition}
\theoremstyle{remark}
\newtheorem{remark}[theorem]{\bf {Remark}}
\newtheorem{example}[theorem]{\bf {Example}}
\numberwithin{equation}{section}
\DeclareMathOperator{\Sym}{\mathscr{S}ym}
\DeclareMathOperator{\Dom}{Dom}
\DeclareMathOperator{\Hom}{Hom}
\DeclareMathOperator{\Hess}{Hess}
\DeclareMathOperator{\cpt}{cpt}
\DeclareMathOperator{\Pf}{Pf}
\DeclareMathOperator{\End}{End}
\DeclareMathOperator{\Res}{Res}
\DeclareMathOperator{\Ker}{Ker}
\DeclareMathOperator{\Tr}{Tr}
\DeclareMathOperator{\Diffeo}{Diffeo}
\DeclareMathOperator{\supp}{supp}
\DeclareMathOperator{\ind}{ind}
\DeclareMathOperator{\even}{even}
\DeclareMathOperator{\grad}{grad}
\DeclareMathOperator{\tina}{\widetilde{\nabla}}
\DeclareMathOperator{\odd}{odd}
\DeclareMathOperator{\loc}{loc}
\DeclareMathOperator{\str}{str}
\DeclareMathOperator{\dist}{dist}
\DeclareMathOperator{\ch}{ch}
\DeclareMathOperator{\ad}{ad}
\DeclareMathOperator{\tr}{tr}
\DeclareMathOperator{\ct}{ct}
\DeclareMathOperator{\Bl}{{\bf B}l}
\DeclareMathOperator{\Vol}{Vol}
\DeclareMathOperator{\rank}{rank}
\DeclareMathOperator{\id}{id}
\DeclareMathOperator{\codim}{codim}
\DeclareMathOperator{\wstr}{w-str}
\DeclareMathOperator{\Lag}{\mathscr{L}ag}
\DeclareMathOperator{\Gr}{Gr}
\DeclareMathOperator{\Real}{Re}
\DeclareMathOperator{\Imag}{Im}
\newcommand{\bA}{\mathbb A}
\newcommand\bC{{\mathbb C}}
\newcommand{\bN}{{{\mathbb N}}}
\newcommand{\bP}{{{\mathbb P}}}
\newcommand\bR{{\mathbb R}}
\newcommand\bZ{{\mathbb Z}}
\newcommand{\eF}{\EuScript{F}}
\newcommand{\eP}{\EuScript{P}}
\newcommand{\eS}{\EuScript{S}}
\newcommand{\eT}{\EuScript{T}}
\newcommand{\eU}{\EuScript{U}}
\newcommand{\eSt}{\EuScript{S}}
\newcommand{\ul}{\underline}
\newcommand{\ra}{\rightarrow}
\newcommand{\ctr}{\mathrel{\llcorner}}
\begin{document}
\title{Vertical flows and a general currential homotopy formula}
\begin{abstract} We generalize  some of the results of Harvey, Lawson and Latschev about transgression formulas. The focus here is  on flowing forms via vertical vector fields, especially Morse-Bott-Smale vector fields.   We prove a very general  transgression formula including also a version covering non-compact situations. Among applications, we completely answer a question of Quillen,  construct the Maslov spark,  give a very short proof of a refined  Chern-Gauss-Bonnet theorem,   and reprove some theorems  of Nicolaescu and  Getzler. A discussion about odd Chern-Weil theory is also included.
\end{abstract}
\author{Daniel Cibotaru}
\address{Universidade Federal do Cear\'a, Fortaleza, CE, Brasil}
\email{daniel@mat.ufc.br}
\maketitle
\tableofcontents

\section{\bf {Introduction}}

One upshot of Harvey and Lawson's work on singular connections \cite{HL1} is a refinement of some classical results in differential topology and geometry. In essence, each of these classical theorems gives   two different representatives for the same cohomology class,  using different constructions for the cohomology theory of a manifold. For example, the celebrated Chern-Gauss-Bonnet in its general form asserts that on an oriented vector bundle with connection, the Euler form of the underlying connection determines a deRham cohomology class which is Poincar\'e dual to the zero locus of a generic section of the bundle. The refinement consists in giving a transgression formula that "quantifies" at the level of currents  how these two representatives differ.  Harvey and Lawson give  a recipe on how to get "canonically" a current whose differential is the difference of the two representatives. Moreover,  it turns out that the transgression formulas themselves are of a cohomological  nature. In fact,   using the "dichotomy" between smooth and integral currents, Harvey, Lawson and Zweck construct a new variant  of  \emph{smooth} cohomology theory \cite{HLZ} such that the groups of the so called de Rham-Federer differential characters  are isomorphic with the groups of differential characters of Cheeger and Simons \cite{CS}. The transgression equations naturally produce objects in these groups. 

In the past 20 years, the authors of  \cite{HL1} and their collaborators have discovered new and important links of their theory of transgression formulas  with other areas and in particular  with Morse theory  \cite{HL2,HL3,HL4,La,Na}. While \cite{HL1} was the starting point of the theory, the framework and results that motivated the writing of the present note are part of \cite{HL2} with major technical inputs from \cite{HL3} and \cite{La}. We should add that there is basically no mention of singular connections in this article.

Our initial project was to generalize some parts of \cite{HL2} to an infinite dimensional context and we obtained indeed some preliminary results in that direction \cite{Ci2}. However,  it turns out  that even in  finite dimensions one can give an expansion of the existing theory, embracing new examples and clarifying certain technical aspects.  This is what we tried to accomplish in the present note.   Roughly half of the article is made up of examples.  Another substantial part of it consists of new\footnote{to a lesser or greater  extent} proofs of known results using the theorems herein.   

Our main result is a very general homotopy formula.
\begin{theorem} \label{t11} Let $\pi:P\ra B$ be a fiber bundle of oriented manifolds.  Assume that $g$ is a Riemannian metric on $P$ with respect to which all fibers are complete, $\omega\in\Omega^k(P)$ is a smooth form, $f:P\ra \bR$ is a smooth, proper, fiberwise  Morse-Bott-Smale function satisfying a certain local-constancy condition and $\phi_0:B\ra P$ is a section transversal to the stable bundles of $\nabla^Vf$, the vertical gradient of $f$, assumed to be complete. Suppose furthermore that $(g,\omega,\nabla^V f, \phi_0)$ form a strongly atomic tuple (Def. \ref{sattup}) and that  $\omega$ satisfies a certain integrability condition  (see (\ref{eq2snv})). Then the following equality of locally flat currents holds:
\begin{equation}\label{t12} \lim_{t\ra \infty}\phi_t^*\omega = \sum_{\codim{S(F)}\leq k} \Res_F^u(\omega)[\phi_0^{-1}(S(F))],
\end{equation}
where $\phi_t$ is the flow of $\phi_0$ up to time $t$ via $\nabla^Vf$, $S(F)/U(F)$ are the stable/unstable bundles of $\nabla^V f$ and $\Res_F^u(\omega)=\tau_F^*\displaystyle\int_{U(F)/F}\omega$ is a certain residue form along $\phi_0^{-1}(S(F))$. Moreover, there exist  currents on $B$ with $L^1_{\loc}$-coefficients $\mathscr{T}_{\infty}(\omega)$ and $\mathscr{T}_{\infty}(d\omega)$ such that:
\begin{equation}\label{t13} \lim_{t\ra \infty}\phi_t^*\omega-\phi_0^*\omega=(-1)^{|\omega|}d[\mathscr{T}_{\infty}(\omega)]+\mathscr{T}_{\infty}(d\omega).
\end{equation}
\end{theorem}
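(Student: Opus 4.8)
The backbone is the elementary finite-time homotopy formula. Let $\Psi_s$ be the flow of $\nabla^V f$ on $P$ (defined for all $s\in\bR$ since $\nabla^V f$ and the fibres are complete), and set $\Phi\colon B\times[0,\infty)\to P$, $\Phi(b,s):=\Psi_s(\phi_0(b))$, so that $\phi_t=\Phi(\cdot,t)$. Writing $\Phi^*\omega$ on $B\times[0,t]$ and integrating over the fibre $[0,t]$, Cartan's identity $\mathcal L_{\nabla^V f}=d\iota_{\nabla^V f}+\iota_{\nabla^V f}d$ together with Stokes gives, for every finite $t$,
\begin{equation*}
\phi_t^*\omega-\phi_0^*\omega=(-1)^{|\omega|}\,d\,\mathscr T_t(\omega)+\mathscr T_t(d\omega),\qquad
\mathscr T_t(\eta):=\int_0^t\phi_s^*\!\bigl(\iota_{\nabla^V f}\eta\bigr)\,ds .
\end{equation*}
This reduces the theorem to two assertions about the behaviour as $t\to\infty$: that the transgression forms $\mathscr T_t(\omega)$ and $\mathscr T_t(d\omega)$ converge, and that $\phi_t^*\omega$ converges, all in the sense of currents.

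The convergence of the transgressions is where the real work lies, and it is exactly what the integrability condition (\ref{eq2snv}) is designed to supply. I would prove that $\int_0^\infty\bigl\|\phi_s^*(\iota_{\nabla^V f}\omega)\bigr\|\,ds$ and its analogue for $d\omega$ are locally integrable on $B$, so that $\mathscr T_t(\omega)\to\mathscr T_\infty(\omega)$ and $\mathscr T_t(d\omega)\to\mathscr T_\infty(d\omega)$ absolutely in $L^1_{\loc}(B)$, the limits being currents with locally integrable coefficients. The mechanism is that along a trajectory the Morse--Bott--Smale normal forms give exponential contraction/expansion rates for the gradient flow in the stable/unstable directions, and the atomicity and integrability hypotheses are calibrated precisely so that, after pulling back by $\phi_s$ and integrating over $s\in[0,\infty)$, one lands in $L^1_{\loc}(B)$ even near $\phi_0^{-1}\bigl(\bigcup_F S(F)\bigr)$; properness of $f$ makes this a locally finite matter (only finitely many critical values are met over a compact piece of $B$) and fibrewise completeness prevents escape to infinity in finite time. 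Granting this, $d\mathscr T_t(\omega)\to d\mathscr T_\infty(\omega)$ weakly, and the finite-time formula forces $\lim_{t\to\infty}\phi_t^*\omega$ to exist as a locally flat current, equal to $\phi_0^*\omega+(-1)^{|\omega|}d\mathscr T_\infty(\omega)+\mathscr T_\infty(d\omega)$; this is (\ref{t13}).

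It remains to identify this limit with the residue sum (\ref{t12}), a local statement handled stratum by stratum, by induction on the critical values of $f$ (equivalently, on $\codim S(F)$). Off $\phi_0^{-1}\bigl(\bigcup_F S(F)\bigr)$ the trajectories $s\mapsto\Phi(b,s)$ converge by standard Morse--Bott--Smale theory and $\phi_t^*\omega$ converges smoothly there, contributing only through the top stratum. Near $\phi_0^{-1}(S(F))$ one uses fibrewise MBS coordinates in which $S(F)$ is locally $\{x_u=0\}$ and, by transversality of $\phi_0$, the $u_F:=\codim S(F)$ coordinates transverse to $\phi_0^{-1}(S(F))$ in $B$ are carried onto the unstable coordinates $x_u$; the factor $e^{ct}$ ($c>0$) by which the flow dilates $x_u$ then makes $\phi_t^*\omega$, paired with a test form on $B$, concentrate onto $\{x_u=0\}$ while spreading over the whole unstable fibre $U(F)/F$, so that the contribution of this stratum tends to $\bigl(\tau_F^*\!\int_{U(F)/F}\omega\bigr)[\phi_0^{-1}(S(F))]=\Res_F^u(\omega)[\phi_0^{-1}(S(F))]$. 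The strongly atomic hypothesis (Def.~\ref{sattup}) is precisely what guarantees that this limit exists with this value, and a degree count --- $\Res_F^u(\omega)$ has degree $k-u_F$ --- shows that only strata with $\codim S(F)\le k$ survive. Summing the strata gives (\ref{t12}).

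The main obstacle is thus twofold: the uniform $L^1_{\loc}$ control of the transgression over an infinite time interval in the presence of non-compact unstable manifolds, where the atomicity and integrability conditions must be exploited sharply; and the bookkeeping of a single trajectory that passes successively near several critical fibre-submanifolds of increasing $f$-value, which is what forces the inductive organization and the role of the local-constancy condition on $f$. The compact situation treated in \cite{HL2,La} is the template; the new content is making the estimates survive non-compactness --- with properness and fibrewise completeness replacing compactness --- and assembling the strata into the single formulas (\ref{t12})--(\ref{t13}).
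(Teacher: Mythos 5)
Your finite-time transgression step and the passage $\mathscr T_t\to\mathscr T_\infty$ are fine and essentially match the paper (this is the content of Proposition \ref{p1} and Theorem \ref{t1}, where the atomicity hypothesis is by definition the required integral bound, so there is nothing to ``prove'' from exponential rates). The genuine gap is in your treatment of the identification (\ref{t12}). First, you never establish that the right-hand side is a current at all: this requires that each $\phi_0^{-1}(S(F))$ have locally finite volume and that the fibre integrals $\int_{U(F)/F}\omega$ converge, which is exactly Propositions \ref{essprop1} and \ref{essprop2} together with condition (\ref{eq2snv}) and Lemma \ref{le2s6}; these are not ``standard Morse--Bott--Smale theory'' but rest on Latschev's Bott--Samelson resolutions with corners (Appendix \ref{appA}) and on the transversality-preservation Lemma \ref{transvlemma}. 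Second, your stratum-by-stratum concentration argument in fibrewise Morse--Bott coordinates controls $\phi_t^*\omega$ only near a single stable stratum, and gives no mechanism for excluding extra flat contributions supported on the closures of the strata, i.e.\ on the set of trajectories broken at two or more critical manifolds; a degree count cannot rule these out. The paper handles precisely this point by passing to the graph currents $\Imag\xi_t$ with $\xi_t=(\phi_t,\phi_0)$ in $P\times_BP$ (this is why \emph{strong} atomicity, Def.~\ref{sattup}, is formulated on the graph), proving via (\ref{xicur}) that $\Imag\xi_\infty$ exists as a closed locally flat current, showing that off a set of zero $\mathcal H^n$-measure it coincides with $\sum_F U(F)\times_F\phi_0(\phi_0^{-1}(S(F)))$, and then invoking Federer's Theorem 4.1.20 to conclude equality; the residue formula then follows by fibre integration. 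Your proposal contains no substitute for this support-theorem step.

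Be aware also that the local product-coordinate concentration route you sketch is essentially the argument of Proposition 9.4/Lemma 10.1 of \cite{HL2}, which the paper explicitly flags as containing an unjustified claim and as not generalizing (Remark \ref{glrem}); Theorem \ref{gan} and the proof of Theorem \ref{refhomf} were written precisely to avoid it. So while your outline of (\ref{t13}) is sound, the proof of (\ref{t12}) as proposed would not go through without importing the finite-volume propositions, the graph-current formulation, and the flat-support argument (or an equivalent), together with the cutoff/dominated-convergence extension of the domains of the limit currents (Lemma \ref{le1s6}) needed in the non-compact setting.
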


Let us  take a more detailed look at the context. 

The precursor to Theorem \ref{t11} is Theorem 10.3 in  \cite{HL2}. This is the particular case where one takes $P$ to be the fiber bundle with fiber the Grassmannian of linear subspaces of a certain $\bZ_2$-graded vector bundle $E\oplus F$. The flow considered in that article is the compactification of the linear flow on $\Hom(E,F)$ which takes $A\ra tA$. The proof, which seems to contain a glitch (see Remark \ref{glrem})  takes advantage  of the fact that the fiber is a Grassmannian and does not generalize. Instead, the main techniques to prove Theorem \ref{t11} are contained in \cite{La} where  Latschev shows, among other things that the stable and unstable manifolds of a Morse-Bott-Smale flow in a \emph{compact} manifold have finite volume. The idea is to construct a compact manifold with corners which comes with a surjective projection to the closure of the stable/unstable manifold, projection which is a diffeomorphism on a dense open subset: in other words, a  Bott-Samelson resolution with corners. Notice that the finite volume of the unstable manifolds is necessary in relation (\ref{t12}) in order for the right hand side to make sense if no integrability condition is imposed on $\omega$. In order to prove Theorem \ref{t11} and its cousins we rewrote the proof of the main technical lemma from Latschev's article \cite{La} in order to fit our necessities. By the way, Latschev's homotopy formula    (Theorem 4.1 in \cite{La}) becomes a particular case of the above result for a certain tautological section (see Corollary \ref{corLat}).

The interest in  formulas like (\ref{t12}) and (\ref{t13})  comes from the application of Theorem \ref{t11}  to closed forms $\omega$ that arise from standard constructions. One example is the classical Chern-Weil theory with $\omega$ a characteristic form associated to a connection in a vector bundle $E\ra B$. Initially this is a form on the base space.     In order to get the machinery running, one needs another piece of data and that is a section of $E$. One can in fact  think of it as a section of $P:=\bP(\bC\oplus E)$. Now one pulls back the bundle with connection $(\bC\oplus E,d\oplus\nabla)$ over   $P$. The theory applies to the characteristic form of the Chern connection of the hyperplane tautological bundle $\tau\subset \bC\oplus E$, connection obtained by orthogonally projecting $d\oplus\nabla$ onto $\tau$. The example of the transgression formula for   the top Chern form of a connection on a complex vector bundle is worked out in complete detail in Section \ref{tCc}. While this is in fact a perfect example of the theory developed in \cite{HL2}, the details are spread out over two articles:  \cite{HL1} and  \cite{HL2}. We gather all the pieces here because this is a beautiful illustration of how one can go quite far armed with formula (\ref{t12}) and a minimal number of computations. In fact,  in this case, one has to perform just one universal computation. The computation seems standard but we include it here for completeness.

  The set-up  of a fiber bundle with a Grassmannian fiber together with the compactification of the linear flow  mentioned a bit earlier seems to be sufficient for such applications. However, there are other situations in which one would like to be able to flow forms at "infinity" and   concretely describe the resulting current. In principle, the number of applications is only limited by the ability of finding locally constant Morse-Bott-Smale flows (for the exact requirements see Section \ref{section 3}) and interesting forms $\omega$. We apply Theorem \ref{t11} in Sections \ref{occMs} and  \ref{SCcf} to produce transgression formulas in (cohomological) odd degree. We recover a result of Nicolaescu \cite{Ni3} and answer a question of Quillen from \cite{Qu0} with this occasion. 
  
   In his celebrated article \cite{Qu0}, Quillen left open the question of weak convergence of the Chern character forms built out of superconnections on a vector bundle. These Quillen superconnections are made up from a connection $\nabla$ and a self-adjoint endomorphism $A$ on a bundle. In \cite{Qu0} he proved that the Chern character forms for $tA$, in the limit  $t\ra \infty$, concentrate on the singular set of $A$, i.e. the points where $A$ has kernel.  Quillen asked about the exact conditions of when this limit exists.  While  it was  clear from the very first treatise on the subject (\cite{HL1}) that the characteristic forms Harvey and Lawson built using a section, a connection and a mode of approximation bears a  resemblance to the superconnection formalism, to our knowledge there has not appeared in the literature an exposition making this resemblance more apparent. We take up this task here in Section \ref{SCcf}, not through the theory of singular connections but  with the means at our disposal. We take advantage of another important result of Quillen \cite{Qu}, that allows the extension of the superconnection Chern character forms to accommodate situations when the bundle endomorphism is unitary and not self-adjoint as in the original construction. This is done using the Cayley and Laplace transforms in \cite{Qu} and applies equally well to even and odd $K$-theory. This "trick" gives forms defined  not only on a dense, open set but on the full Grassmannian,  forms which can be flown out. One of the highlights of our applications is the computation of this limit at infinity, described explicitly in Theorems \ref{ansQu} and \ref{ansQu2}.
  
  The main result of Nicolaescu in \cite{Ni3} is about the Poincar\'e duality of certain invariant forms  on the unitary group $U(n)$ (arising from the Maurer-Cartan connection) and certain currents appearing as stable manifolds of natural "height" Morse functions on $U(n)$. The original proof made appeal to the theory of analytic currents of Hardt \cite{Ha}. We should say that analyticity\footnote{Liviu Nicolaescu brought to our attention the fact that the model flows we considered in this paper  are either tame or conjugate to  tame flows (see \cite{Ni4} and Example \ref{univmodel}).} has its own important role in the theory as clearly demonstrated in \cite{HL2}. Nevertheless, it is quite straightforward to see that Nicolaescu's Theorem is a particular case of Theorem \ref{t11}. The hardest part seems to be the computation of a certain residue, a result which we believe is interesting in itself. This is done in Appendix \ref{resU}.

  In Section \ref{occMs}, we introduce a representative for a degree zero de Rham-Federer differential character called the Maslov spark. This object is naturally associated to a pair made of a unitary bundle isomorphism and a connection. The Maslov cycle, the locally integrally flat part of the Maslov spark,  and its higher degree relatives, discussed at length in  \cite{Ci}, have a well known relation with the spectral flow of a family of self-adjoint Fredholm operators. 
    
   In the last section we show  the usefulness of  Theorem \ref{t11} in its  non-compact formulation. The fiber bundle $P\ra B$ is simply a vector bundle.  We recover a  formula of Getzler from \cite{Ge}, equivalent with Chern-Gauss-Bonnet, formula where he uses Thom forms which are Gaussian shaped, in the spirit of the Mathai-Quillen formalism.
In fact, Getzler's result and a theorem by Bismut in \cite{Bi} and in general the superconnection formalism were the other important motivation for the present results.  

One important notion Harvey and Lawson introduced in \cite{HL2} is that of geometric atomicity. One says that a section $\phi_0$ is (weakly) geometrically atomic with respect to a vertical vector field $X$ on the fiber bundle $P\ra B$ if the volume of  $\phi ({[0,\infty)}\times B)$ is finite. It turns out that a  transgression formula \ref{t13} makes sense in this conditions even if $X$ is not the vertical gradient of a nice Morse-Bott-Smale function (see Theorem \ref{t1}). However, in this general context not much can be said about $\displaystyle\lim_{t\ra \infty}\phi_t^*\omega$. Geometric atomicity is philosophically central to the article since the convergence of currents (locally) in the mass norm or the flat norm depends essentially on having finite volume at finite distances. 
The completeness of the fibers, properness of $f$ and the integrability conditions on $\omega$ - a tuple being strongly atomic is essentially one such condition - insure that things behave well at $\infty$ as well. As a matter of fact, Theorem \ref{t11} is a two stage generalization of the aforementioned result from \cite{HL2}. In the first stage, we assume compact fiber and no condition on $\omega$ and we prove  Theorem \ref{refhomf}. In the second stage we noticed that not much changes if the finite volume condition is replaced by an integrability condition. This clarifies also the role of geometric atomicity, weak or strong.

As we said before, one big part of the article is made up of proofs of known results and one of these is Latschev's main theorem about the existence of Bott-Samelson resolutions with corners.  We include a complete treatment  in the Appendix because on one hand we needed the details in the proof of Theorem \ref{refhomf} and on the other hand because it brings a certain degree of simplification to some of the original arguments.

Among other applications, we give a short proof of a refined version of the  Chern-Gauss-Bonnet Theorem using the fiberwise one-point compactification of an oriented real vector bundle and the height function in the role of $f$. As opposed to most other known proofs this one does not go through the construction of a Thom form on the real vector bundle first. 

Finally, in Section \ref{occMs} we include a short discussion about a possible approach to an odd Chern-Weil theory.

\vspace{0.3cm}

\noindent
\emph{Acknowledgements:} My first contact with the fundamental work of Harvey and Lawson was through conversations with  Liviu Nicolaescu. He also made important suggestions that helped improved the exposition.  For this and for many other obvious reasons, I would like to warmly thank him.  In the past several years, several people have listen to me occasionally   talking about transgression and sparks and superconnections.  I can not say for sure that I convinced them I was talking about mathematics. Among them are Marianty Ionel, Michael Deutsch,  Jorge de Lira, Levi Lima, Lev Birbrair, Paulo Piccione and Luciano Mari. I owe them many thanks,  for patiently listening to my repeatedly imprecise statements.

\section{\bf{Vertical vector fields and geometric atomicity}}
\label{s2}

 All manifolds used in this article are smooth, finite dimensional  and \emph{without boundary}, although they could potentially be disconnected and non-compact unless stated otherwise. 

 Let $\pi:P\ra B$ be a locally trivial fiber bundle with fiber  $M$ and an $n$-dimensional, \emph{oriented} base manifold $B$. Suppose $P$ is endowed with a Riemannian metric. Let $X:P\ra VP$ be a vertical vector field on $P$ where the vertical tangent space $VP:=\Ker d\pi$ represents the collection of all the tangent spaces to the fibers. In the next section we will be imposing certain pleasant properties on $X$ in order to obtain refined results. Here we will only assume that the flow determined by $X$ is \emph{complete}, which is automatic for example if $F$ is compact. Notice that every integral curve of $X$ that starts in a fiber stays in the same fiber forever. Let $\Phi:\bR\times P\ra P$ be the flow of $X$.

If $s=\phi_0:B\ra P$ is a section, we consider the map:
\begin{equation}\label{flsec} \phi:\bR\times B\ra P,\quad\quad \phi (t,b)=\Phi(t,\phi_0(b)),
\end{equation}
which is the flowout of $s$.

Notice that for every integer $k\geq 0$ and for every $t\geq 0$ we have an operator 
\[ \eT_t:\Omega^k(P)\ra \Omega_{k-1}(B), \quad\quad \eT_t(\omega)=\left\{ \eta\ra \int_{[0,t]\times B}\phi^*\omega\wedge p_2^*\eta\right\}, \quad \forall \eta\in\Omega_0^{n-k+1}(B),
\]
where $\Omega^{k}(P)$ are smooth differential forms on $P$,  $\Omega_{k-1}(B)$ are currents of degree $k-1$ on $B$, meaning elements of the topological dual to $\Omega_0^{n-k+1}(B)$ (forms with compact support) and $p_2:\bR\times B\ra B$ is the projection on the second factor.
\begin{remark} Notice the different grading on $\Omega_*(B)$. We follow \cite{HL3} in using a convention   that has the advantage of making the operator $\partial$ of degree $+1$ just like $d$ on forms. This turns the natural inclusion $\Omega^*(B)\hookrightarrow \Omega_*(B)$ into a morphism of differential graded algebras.
\end{remark}
\begin{remark} The current  $\eT_t$ can be described simply by using standard operations with currents:
\[ \eT_t(\omega)=\pi_*(\omega\wedge \phi_*([0,t]\times B)).
\]
This holds essentially because $\pi\circ \phi=p_2$ which is due to the fact that $X$ is vertical.  We used $\wedge$ where it is standard to put $\llcorner$. This is due to the  grading convention.
\end{remark}
\begin{remark} Notice that in order for the integral above to make sense one only needs an orientation on $B$. For everything that follows one does not need $P$ or the fiber $M$ to be orientable, just for $B$ to have a fixed orientation.  One can in fact remove even this requirement by working with twisted forms as described in the Appendix of \cite{HL2}. In our set-up one would ask for $\omega$ to be an \emph{untwisted form} (for which the operation of pull-back makes sense)  but all $\eta$ would be twisted forms on $B$ and therefore $\phi^*\omega\wedge \eta$ would be twisted and could be integrated over $B$.   We do not see any inconvenient in using this convention throughout the article. However, we preferred a more classical set-up. 
\end{remark}
We investigate the conditions under which the limit $\displaystyle\lim_{t\ra \infty}\eT_t$ exists in the weak sense. Let us start with an elementary result.

Let $Z:=(X\circ \phi_0)^{-1}(0)\subset B$ be the singular locus of the section $\phi_0$ with respect to $X$. We have:
\begin{lemma}
\[ \eT_t(\omega)=\left\{ \eta\ra \int_{[0,t]\times B\setminus Z} \phi^*\omega\wedge p_2^*\eta \right\},\quad\quad\forall \eta\in\Omega_0^{n-k+1}(B).
\]
\end{lemma}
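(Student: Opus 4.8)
The plan is to show that the integrand $\phi^*\omega\wedge p_2^*\eta$ vanishes, \emph{as a differential form}, at every point of the closed set $[0,t]\times Z$, so that this set contributes nothing to the integral --- even though $Z$ need not be Lebesgue--negligible in $B$.

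First I would note that if $b\in Z$ then $X(\phi_0(b))=0$, so $\phi_0(b)$ is a rest point of the flow $\Phi$; hence $\phi(s,b)=\phi_0(b)$ for every $s\in\bR$, and consequently $\tfrac{\partial}{\partial s}\phi(s,b)=X(\phi(s,b))=X(\phi_0(b))=0$. Writing $\partial_s$ for the coordinate vector field along the $[0,t]$ factor of the product manifold $[0,t]\times B$, this says exactly that $d\phi_{(s,b)}(\partial_s)=0$ whenever $b\in Z$. Since $p_2$ is the projection onto $B$, we also have $dp_2(\partial_s)=0$ at every point.

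Next, set $\mu:=\phi^*\omega\wedge p_2^*\eta$, a form of degree $k+(n-k+1)=n+1=\dim\bigl([0,t]\times B\bigr)$. By the Leibniz rule for contraction, $\iota_{\partial_s}\mu=(\iota_{\partial_s}\phi^*\omega)\wedge p_2^*\eta\pm\phi^*\omega\wedge(\iota_{\partial_s}p_2^*\eta)$. The second summand is identically $0$ because $\iota_{\partial_s}p_2^*\eta=p_2^*\bigl(\iota_{dp_2(\partial_s)}\eta\bigr)=0$; the first summand vanishes at every point $(s,b)$ with $b\in Z$ because there $\iota_{\partial_s}\phi^*\omega=\omega\bigl(d\phi(\partial_s),\,\cdot\,,\dots,\,\cdot\,\bigr)=0$. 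Hence $\iota_{\partial_s}\mu=0$ at every point of $[0,t]\times Z$. But any $(n+1)$-form $\mu$ on the $(n+1)$-manifold $[0,t]\times B$ satisfies $\mu=ds\wedge\iota_{\partial_s}\mu$ pointwise (its component not involving $ds$ would be an $(n+1)$-form built from the $n$ cotangent directions coming from $B$, hence $0$); therefore $\mu$ itself vanishes at every point of $[0,t]\times Z$.

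Finally, writing $\mu=g\cdot\nu$ for a fixed smooth volume form $\nu$ on $[0,t]\times B$ and a smooth function $g$, the previous step gives $g\equiv 0$ on $[0,t]\times Z$, and therefore $\int_{[0,t]\times B}\mu=\int_{[0,t]\times(B\setminus Z)}\mu$, which is the claimed identity (here $[0,t]\times B\setminus Z$ stands for $[0,t]\times(B\setminus Z)$, and the case $k=0$ is vacuous since then $p_2^*\eta=0$). There is no real obstacle in this argument; the only point worth flagging is that $Z$ may be positive- or even full-dimensional in $B$, so one genuinely needs the degeneracy of the top-degree form $\mu$ along $[0,t]\times Z$ rather than a measure-theoretic triviality of the excised set.
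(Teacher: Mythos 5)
Your proposal is correct and rests on exactly the same mechanism as the paper's proof: for $b\in Z$ the flow is stationary, so the contraction of $\phi^*\omega$ with $\partial_s$ vanishes along $[0,t]\times\{b\}$, and this is the only part of the integrand that can contribute. The paper packages this via Fubini and the fiber integral over $[0,t]$ (decomposing $\phi^*\omega$ into its $dt$-component $\phi_s^*(\iota_{X(\phi(s,\cdot))}\omega)$), while you observe directly that the top-degree form $\phi^*\omega\wedge p_2^*\eta$ equals $ds\wedge\iota_{\partial_s}(\cdot)$ and hence vanishes pointwise on $[0,t]\times Z$; this is a harmless repackaging of the same argument.
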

\begin{proof} Notice that by Fubini theorem 
\[ \mathcal{T}_t(\omega)(\eta)=\int_{B}\left(\int_{[0,t]}\phi^*\omega\right)\wedge\eta,
\]
where $\int_{[0,t]}\phi^*\omega$ is the integral of $\phi^*\omega$ over the fiber of the projection $p_2:[0,t]\times B\ra B$. Every $k$ form $\alpha\in\Omega^{k}([0,t]\times B)$ can be written uniquely as
\[ \alpha=\beta\wedge dt+\gamma
\]
such that the contraction of $\beta$ or $\gamma$ with $\frac{\partial}{\partial t}$ is zero. Notice  that $\beta=(-1)^{k-1}\iota_{\frac{\partial}{\partial t}}\alpha$.  We have
\[\iota_{\frac{\partial}{\partial t}}(\phi^*\omega)(s,\cdot)=\phi^*_s\left(\iota_{\frac{\partial \phi}{\partial t}(s,\cdot)}\omega\right)(\cdot)=\phi_s^*\left(\iota_{X(\phi(s,\cdot))}\omega\right)(\cdot),
\]
with $\iota_{\frac{\partial \phi}{\partial t}(s,\cdot)}\omega$ defined only for points in the image of $\phi$.
It follows that for a fixed $b\in B$ 
\[ \left(\int_{[0,t]}\phi^*\omega\right)(b)=(-1)^{k-1}\int_{[0,t]}\phi^*_s\left(\iota_{\frac{\partial \phi}{\partial t}(s,b)}\omega\right)(b)ds
\]
which is obviously $0$ if $b\in(X\circ\phi_0)^{-1}\{0\}$ since in that case $\frac{\partial \phi}{\partial t}(s,b)=0$ for all $s$.
\end{proof}

Now $\phi\bigr|_{[0,t]\times B\setminus Z}$ is obviously an immersion and we follow \cite{HL2} for the terminology we are about to introduce:
\begin{definition} A section $s:B\ra P$ is called weakly geometrically atomic on the positive/negative semi-axis with respect to $X$ if the set $[0,\infty)\times B\setminus Z$, respectively $(-\infty,0]\times B\setminus Z$ has \emph{locally} finite $(n+1)$-Hausdorff measure with respect to the Riemannian metric induced from $P$ via the immersion $\phi:\bR\times B\setminus Z\ra P$. This means that $\forall b\in B\setminus Z$ there exists a neighborhood $b\in U\subset B\setminus Z$ such that $\phi([0,\infty)\times U)$ has finite $(n+1)$-Hausdorff measure with an analogous property holding for the other case. 

The section $s:B\ra P$ is called weakly geometrically atomic  if it is so on both semi-axis.

\end{definition} 

Notice that if $s$ satisfies weak geometric atomicity on the positive semi-axis then $[0,\infty)\times (\supp \eta\setminus Z)$ will have finite Hausdorff measure for all $\eta\in\Omega^*_0(B)$. Most of the time we will use weak geometric atomicity on $[0,\infty)$ and therefore we will omit to specify the semi-axis. 

For the rest of this section \emph{weak geometric atomicity}  is the relevant notion. However, we will need a slightly stronger notion in Section \ref{Sect5}.

\begin{definition}\label{stratom} A section $s:B\ra P$ is called strongly geometrically atomic on the positive semi-axis with respect to $X$ if the set $[0,\infty)\times B\setminus Z$  has \emph{locally} finite $(n+1)$-Hausdorff measure with respect to the Riemannian metric induced from $P\times P$ via the immersion $\xi:\bR\times B\setminus Z\ra P\times P$ defined by:
\[ \xi(t,b)=(\phi_t(b),\phi_0(b)).
\]
\end{definition}
One could use $P\times_BP$ in  Definition \ref{stratom} but it would not make a difference since  $P\times_BP$ gets its natural metric from $P\times P$. The next easy result justifies the choice of words in the definitions above.
\begin{lemma} Strong geometric atomicity implies weak geometric atomicity. 
\end{lemma}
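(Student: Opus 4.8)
The plan is to reduce everything to an elementary comparison of the two pulled-back Riemannian metrics on the common domain $\bR\times B\setminus Z$. First I would observe that, writing $p_2\colon\bR\times B\ra B$ for the projection, the immersion $\xi$ of Definition \ref{stratom} factors as $\xi=(\phi,\phi_0\circ p_2)$, and that $Z=(X\circ\phi_0)^{-1}(0)$ is literally the same subset appearing in the weak notion, so there is no mismatch of domains. Since $P\times P$ carries the product metric, pulling back along $\xi$ gives $\xi^*g_{P\times P}=\phi^*g+p_2^*\phi_0^*g$ on $\bR\times B\setminus Z$. The second summand is the pullback of a Riemannian metric, hence a positive semidefinite symmetric $2$-tensor, so $\xi^*g_{P\times P}\ge\phi^*g$ pointwise as quadratic forms on each tangent space. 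Both are genuine Riemannian metrics there, because $\phi$, and hence $\xi$, is an immersion away from $Z$.

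Next I would pass from the infinitesimal inequality to distances and then to measures. The inequality of quadratic forms implies that the $\xi^*g_{P\times P}$-length of any piecewise-$C^1$ curve dominates its $\phi^*g$-length, whence the induced distance functions satisfy $d_{\xi^*g_{P\times P}}\ge d_{\phi^*g}$; equivalently the identity map from $(\bR\times B\setminus Z,\,d_{\xi^*g_{P\times P}})$ to $(\bR\times B\setminus Z,\,d_{\phi^*g})$ is $1$-Lipschitz. Because a $1$-Lipschitz map does not increase $s$-dimensional Hausdorff measure for any $s$, this yields $\mathcal H^{n+1}_{\phi^*g}(A)\le\mathcal H^{n+1}_{\xi^*g_{P\times P}}(A)$ for every subset $A\subset\bR\times B\setminus Z$ — concretely, any cover of $A$ by sets of small $d_{\xi^*g_{P\times P}}$-diameter is a cover by sets of no larger $d_{\phi^*g}$-diameter.

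Finally I would conclude: given a point $b\in B\setminus Z$, strong geometric atomicity on the positive semi-axis supplies a neighborhood $b\in U\subset B\setminus Z$ with $\mathcal H^{n+1}_{\xi^*g_{P\times P}}([0,\infty)\times U)<\infty$, and the measure comparison above forces $\mathcal H^{n+1}_{\phi^*g}([0,\infty)\times U)<\infty$ as well; since $b$ was arbitrary this is exactly weak geometric atomicity on the positive semi-axis, and the negative semi-axis is handled identically. I do not expect any real obstacle here: the only points deserving care are checking that the domain $\bR\times B\setminus Z$ and the immersion property are shared by both definitions, and invoking the (standard) monotonicity of Hausdorff measure under $1$-Lipschitz maps, rather than trying to argue directly with the Riemannian volumes of the two immersed submanifolds.
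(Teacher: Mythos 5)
Your argument is correct, and it rests on the same underlying mechanism as the paper's proof — that forgetting the second factor of $P\times P$ can only contract lengths — but you package it differently. The paper argues on the image side: the first-factor projection $\pi_1\colon P\times P\ra P$ is a Riemannian submersion carrying $\xi([0,\infty)\times B\setminus Z)$ diffeomorphically onto $\phi([0,\infty)\times B\setminus Z)$, and it then invokes the (stated as an easy exercise) fact that the volume of a section of a Riemannian submersion dominates the volume of the base, concluding $\Vol(\xi(\cdot))\geq\Vol(\phi(\cdot))$. You instead work on the common parameter domain $\bR\times B\setminus Z$: the identity $\xi^*g_{P\times P}=\phi^*g+p_2^*\phi_0^*g$ gives the pointwise inequality of quadratic forms $\xi^*g_{P\times P}\geq\phi^*g$, hence a $1$-Lipschitz identity map between the two induced distances, hence monotonicity of $(n+1)$-dimensional Hausdorff measure. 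Your route is the more elementary and, arguably, the cleaner one relative to how the definitions are actually phrased (locally finite Hausdorff measure of the domain with the pulled-back metric): it sidesteps any bookkeeping about images versus domains and about volumes of immersed, possibly non-embedded sets, and the Riemannian-submersion exercise the paper appeals to is exactly your pointwise inequality in disguise. The only points worth stating explicitly, which you do, are that $Z$ and the domain are the same for both notions, that $\phi$ (hence $\xi$) is an immersion off $Z$ so both pullbacks are genuine metrics, and that Hausdorff measure does not increase under $1$-Lipschitz maps; if one insists on the formulation in terms of the images $\phi([0,\infty)\times U)\subset P$, the same Lipschitz monotonicity applied to $\phi$ itself closes that last gap as well.
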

\begin{proof} The projection onto the first factor of $P\times P$ takes $\xi([0,\infty)\times B\setminus Z)$ diffeomorphically onto $\phi([0,\infty)\times B\setminus Z)$. This projection is a Riemannian submersion. It is an easy exercise to show that the volume of the section of a Riemannian submersion is bigger or equal the volume of the base space. Therefore
\[\Vol(\xi(\bR\times B\setminus Z))\geq\Vol(\phi(\bR\times B\setminus Z)).
\]
\end{proof}

\begin{remark} The definition of weak/strong geometric atomicity seems to depend on the metric on $P$. However, if $P$ is compact then one can check easily that the condition does not depend on the metric. It turns out that the following more general fact is true: the condition of geometric atomicity depends only on the equivalence class of vertical Riemannian metrics on the vector bundle $VP\ra P$, equivalence which can be defined as follows. Two metrics $g_1$ and $g_2$ are in the same class if for every $b_0\in B$ there exist a neighborhood $U$ and constants $C_1$ and $C_2$ such that $g_1(b)\leq C_1g_2(b)$ and $g_2(b)\leq C_2g_1(b)$, where $g_i(b)$ represent the Riemannian metrics on the fiber $P_p:=\pi^{-1}(b)$. In particular, any two vertical metrics are equivalent if the fiber $F$ is compact.  
\end{remark}
\begin{remark} Notice that the definition of geometric atomicity is vacuous for periodic flows since only sections $\phi_0$ for which $B\setminus Z$ has Hausdorff dimension smaller than $n$ could potentially fulfill the condition, a situation that will not be treated here. The prototype for $X$ is the gradient of a Morse function in each fiber.
\end{remark}

\begin{definition}  A smooth form $\omega\in\Omega^k(P)$ is called (locally) vertically bounded if for every compact $K\subset B$ there exists a constant $C=C(K)$ such that
\[|\omega_p|\leq C_K,\quad\quad\forall p\in \pi^{-1}(K).
\]
\end{definition}
In particular every smooth form on a fiber bundle with compact fiber is vertically bounded.

We have the following result:
\begin{prop}\label{p1} Let $\omega\in\Omega^k(P)$ be a vertically bounded, smooth differential form and $\phi_0:B\ra P$ be a weakly  geometrically atomic section with respect to $X$. Then
\[ \lim_{t\ra \infty}\eT_t(\omega)=\eT_{\infty}(\omega):=\left\{\eta\ra \int_{[0,\infty)\times B\setminus Z}\phi^*\omega\wedge p_2^*\eta\right\}.
\]
where the limit is in the topology  of currents with locally finite mass on $B$, i.e.  $\gamma T_t\ra\gamma T_{\infty}$ in the mass norm for every $\gamma$ a smooth function with compact support.  
\end{prop}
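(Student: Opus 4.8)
The plan is to reduce the statement to a local estimate over a fixed compact $K\subset B$ and then to dominate the tail of $\eT_t(\omega)$ by the $(n+1)$-volume of the corresponding tail of the flowout of $s$, a quantity which is finite precisely because of weak geometric atomicity. First I fix an auxiliary Riemannian metric on $B$; it is needed only to make sense of the comass of test forms, and hence of the mass norm, and since all the currents that occur have support in a fixed compact set the conclusion does not depend on this choice. Given $\gamma\in\Omega_0^0(B)$, put $K:=\supp\gamma$. Using the preceding Lemma to discard $Z$, for every $\eta\in\Omega_0^{n-k+1}(B)$ one has
\[ \bigl(\eT_t(\omega)-\eT_\infty(\omega)\bigr)(\eta)=-\int_{[t,\infty)\times B\setminus Z}\phi^*\omega\wedge p_2^*\eta,
\]
so it suffices to show that $\eT_\infty(\omega)$ is a well-defined current of locally finite mass and that $\mathbf{M}_K\bigl(\eT_t(\omega)-\eT_\infty(\omega)\bigr)\to 0$ as $t\to\infty$; the asserted convergence $\gamma\eT_t(\omega)\to\gamma\eT_\infty(\omega)$ in the mass norm then follows from $\mathbf{M}(\gamma S)\le\|\gamma\|_\infty\mathbf{M}_K(S)$.

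The heart of the matter is the estimate of the integral above. Since $X$ is vertical we have $\pi\circ\phi=p_2$, hence $\phi^*\omega\wedge p_2^*\eta=\phi^*(\omega\wedge\pi^*\eta)$, a top-degree form on the $(n+1)$-manifold $[0,\infty)\times(B\setminus Z)$, which $\phi$ immerses into $P$ (as already observed in the text). Endowing $[0,\infty)\times(B\setminus Z)$ with the pullback metric $\phi^*g$, using the pointwise inequality $|\phi^*\alpha|_{\phi^*g}\le|\alpha|_g\circ\phi$, and noting that $\pi(\phi(s,b))=b$ forces $\omega\wedge\pi^*\eta$ to be supported in $\pi^{-1}(K)$ and the integrand to vanish off $[0,\infty)\times K$, I obtain
\[ \Bigl|\int_{[t,\infty)\times B\setminus Z}\phi^*\omega\wedge p_2^*\eta\Bigr|\le\bigl\|\omega\wedge\pi^*\eta\bigr\|_{g,\,\pi^{-1}(K)}\cdot\Vol_{\phi^*g}\bigl([t,\infty)\times(K\setminus Z)\bigr),
\]
where $\Vol_{\phi^*g}$ is the $(n+1)$-dimensional Hausdorff measure in the metric induced via $\phi$, i.e.\ the quantity occurring in the definition of weak geometric atomicity. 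For the comass factor I use vertical boundedness, $|\omega|_g\le C_K$ on $\pi^{-1}(K)$, together with the fact that $\pi^*\eta$ pairs only with horizontal directions, to bound $\|\omega\wedge\pi^*\eta\|_{g,\pi^{-1}(K)}\le C_K\,\|d\pi\|_{\pi^{-1}(K)}^{n-k+1}\,\|\eta\|_\infty$ (with $\|\eta\|_\infty$ the supremum of the pointwise comass); over the compact $K$ the operator norm of $d\pi$ is bounded on $\pi^{-1}(K)$ (automatically so when the fibre $M$ is compact, and in general in the locally trivial situation at hand — this is the context the vertical-boundedness hypothesis is designed to accompany). Altogether,
\[ \bigl|\bigl(\eT_t(\omega)-\eT_\infty(\omega)\bigr)(\eta)\bigr|\le C_K'\,\|\eta\|_\infty\,\Vol_{\phi^*g}\bigl([t,\infty)\times(K\setminus Z)\bigr).
\]

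Finally I invoke weak geometric atomicity together with the remark recorded just after its definition, namely that $[0,\infty)\times(\supp\eta\setminus Z)$ has finite $(n+1)$-Hausdorff measure; this gives $\Vol_{\phi^*g}([0,\infty)\times(K\setminus Z))<\infty$. The case $t=0$ of the last display shows that the integral defining $\eT_\infty(\omega)(\eta)$ converges absolutely and that $\mathbf{M}_K(\eT_\infty(\omega))<\infty$, so $\eT_\infty(\omega)$ is a genuine current of locally finite mass; taking the supremum over $\eta$ supported in $K$ with $\|\eta\|_\infty\le1$ then gives
\[ \mathbf{M}_K\bigl(\eT_t(\omega)-\eT_\infty(\omega)\bigr)\le C_K'\,\Vol_{\phi^*g}\bigl([t,\infty)\times(K\setminus Z)\bigr)\longrightarrow 0\qquad(t\to\infty),
\]
the limit being the vanishing of the tail of a finite measure. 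I expect the main obstacle to be exactly the control of the comass $\|\omega\wedge\pi^*\eta\|_{g,\pi^{-1}(K)}$ when the fibre is non-compact: vertical boundedness takes care of the $\omega$-factor, but one also needs the horizontal part of the metric on $P$ not to degenerate along the flowout, and over a compact base set this is what local triviality delivers. A secondary, routine point, used in the first displayed inequality, is that for an immersion $\phi$ one has $\bigl|\int_N\phi^*\alpha\bigr|\le\int_N(|\alpha|_g\circ\phi)\,d\Vol_{\phi^*g}$.
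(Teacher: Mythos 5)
Your argument is correct and follows essentially the paper's proof: the paper likewise notes that $\phi^*\omega\wedge p_2^*\eta$ is bounded on the finite-volume set $[0,\infty)\times \supp\eta\setminus Z$ furnished by weak geometric atomicity and concludes by the Lebesgue dominated convergence theorem, of which your explicit ``tail of a finite measure'' mass estimate is just a quantitative rephrasing. The one point you single out — controlling the comass of $p_2^*\eta$ with respect to the induced metric, i.e.\ the boundedness of $\|d\pi\|$ on $\pi^{-1}(K)$, automatic for compact fibre — is precisely the boundedness the paper asserts in a single line, so your write-up is, if anything, the more careful of the two.
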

\begin{proof} The right hand side is well-defined because the form $\phi^*\omega\wedge p_2^*\eta$ is bounded on the set of finite volume $[0,\infty)\times \supp{\eta}\setminus Z$. It is also a current of locally  finite mass. Then $\eT_t(\omega)$ converges to $\eT_{\infty}$ by the Lebesgue dominated convergence theorem.  
\end{proof}
\begin{remark} The current $\eT_{\infty}(\omega)$ is represented by a form on $B$ with $L^{1}_{\loc}$-coefficients. By this we understand the following. Suppose we have a Riemannian metric on $B$. One can show that $\eT_{\infty}(\omega)(\eta)=\int_B \tilde{\omega}\wedge \eta$, where $\tilde\omega: B\ra \Lambda^{k-1}T^*B$ is a form which satisfies the property that for every $b\in B$ there exists a neighborhood $U$ such that for every smooth $\overrightarrow{\xi}: U\ra \Lambda^{k-1}TB\bigr|_{U}$ with $\|\overrightarrow{\xi}(b)\|=1$, $\forall b\in U$,  the function $b\ra \tilde{\omega}(\overrightarrow{\xi})(b)$ is integrable on $U$.

In fact,
\[ \tilde{\omega}(b)=\int_0^{\infty}\phi^*_s(\iota_{X(\phi(s,b))}\omega)(b)~ds,
\]
and Fubini Theorem for
 \[\int_{[0,\infty)\times \supp {\eta}}\phi^*\omega\wedge p_2^*\eta\left(\frac{\partial}{\partial t }\wedge\overrightarrow{\xi}\wedge\overrightarrow{\xi}^c\right)dt\otimes d\mathcal{H}^n\]
 for various choices of $\eta$ implies the claimed property for $\tilde{\omega}$. It should be clear that having $L^1_{\loc}$-coefficients does not depend on the choice of metric on $B$.
\end{remark}
\begin{remark} If $d\omega$ is also vertically bounded then $\eT_t(\omega)$ is locally normal by Stokes Theorem, and since the limit is taken locally in the mass norm it implies that $\eT_{\infty}(\omega)$ is locally flat. In particular this happens for $\omega=d\alpha$ with $d\alpha$ vertically bounded.
\end{remark}
\begin{remark} The underlying reason for which $\eT_{\infty}(\omega)$ might not be smooth is the fact that $\phi:\bR_{\geq}\times B\ra B$ is not a fiber bundle in general since $X$ has stationary points. However,   if  the vector field $X$ is a locally constant Morse-Bott-Smale vector field (see Def. \ref{MBSdef}) and  if the  image of $\phi_0$ is contained in the stable bundle $S(F)$ of a single stationary manifold $F$ of $X$ then $\eT_{\infty}(\omega)$ is smooth too. However, this is a  non-generic situation. 
\end{remark}

\begin{definition} The boundary operator for $\eT_t$ is
\[ \partial \eT_t:\Omega^*(P)\ra \Omega_*(B),\quad\quad \partial \eT_t(\omega)=\eT_t(d\omega)+(-1)^{|\omega|}d[\eT_t(\omega)],
\]
where $dT(\eta):=T(d\eta)$ for all currents $T\in \Omega_*(B)$.
\end{definition}
Unwinding the definition we get:
\[ \partial \eT_t(\omega)(\eta)=\int_{[0,t]\times B}d(\phi^*\omega\wedge p_2^*\eta)=\int_B \phi_t^*\omega\wedge \eta-\int_B\phi_0^*\omega\wedge\eta.
\]
\begin{theorem}\label{t1} Let $P\ra B$ be a fiber bundle and $X$ be a vertical vector field. Let $\omega\in\Omega^k(P)$ be a vertically bounded form such that $d\omega$ is also vertically bounded. Assume $\phi_0$ is weakly geometrically atomic with respect to $X$. Then 
\[ \lim_{t\ra \infty}\partial\eT_t(\omega)=\partial\eT_{\infty}(\omega),
\]
locally as flat currents on $B$.
\end{theorem}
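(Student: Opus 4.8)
\emph{Proof idea.} The plan is to get this as a two–line corollary of Proposition \ref{p1}, applied to \emph{both} $\omega$ and $d\omega$, together with the elementary fact that the exterior derivative on currents is continuous from the local mass topology to the local flat topology. First I would recall the identity obtained by unwinding the definitions,
\[
\partial\eT_t(\omega)=\eT_t(d\omega)+(-1)^{|\omega|}\,d[\eT_t(\omega)],
\]
and note that $\partial\eT_\infty(\omega)$ is by definition $\eT_\infty(d\omega)+(-1)^{|\omega|}\,d[\eT_\infty(\omega)]$; both right–hand sides make sense because, $\omega$ and $d\omega$ being vertically bounded (with $d(d\omega)=0$ trivially vertically bounded), Proposition \ref{p1} applies to each of them and yields currents $\eT_\infty(\omega),\eT_\infty(d\omega)$ of locally finite mass with $\eT_t(\omega)\ra\eT_\infty(\omega)$ and $\eT_t(d\omega)\ra\eT_\infty(d\omega)$ locally in the mass norm, i.e. $\gamma\eT_t(\cdot)\ra\gamma\eT_\infty(\cdot)$ in mass for every smooth, compactly supported $\gamma$.

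It then suffices to handle the two summands separately after multiplying by an arbitrary cutoff $\gamma\in C_0^\infty(B)$. For the first summand, $\gamma\eT_t(d\omega)\ra\gamma\eT_\infty(d\omega)$ in the mass norm, and mass convergence implies flat convergence, so this term is harmless. For the second, I would use the Leibniz rule for currents, $d(\gamma S)=d\gamma\wedge S\pm\gamma\,dS$ (signs according to the grading convention of Section \ref{s2}), to write $\gamma\,d[\eT_t(\omega)]$ as a combination of $d\!\bigl(\gamma\eT_t(\omega)\bigr)$ and $d\gamma\wedge\eT_t(\omega)$. Choosing $\gamma'\in C_0^\infty(B)$ with $\gamma'\equiv 1$ on $\supp\gamma$, one has $d\gamma\wedge\eT_t(\omega)=d\gamma\wedge\bigl(\gamma'\eT_t(\omega)\bigr)\ra d\gamma\wedge\bigl(\gamma'\eT_\infty(\omega)\bigr)$ in the mass norm, since wedging with the fixed bounded smooth form $d\gamma$ is mass–continuous; and $d\!\bigl(\gamma\eT_t(\omega)\bigr)\ra d\!\bigl(\gamma\eT_\infty(\omega)\bigr)$ in the flat norm by the standard estimate $\mathbf{F}(dR)\le\mathbf{M}(R)$ (valid for the coboundary $d$ on currents in the present grading convention) applied to $R=\gamma\eT_t(\omega)-\gamma\eT_\infty(\omega)$, whose mass tends to $0$. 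Adding up, $\gamma\,\partial\eT_t(\omega)\ra\gamma\,\partial\eT_\infty(\omega)$ in the flat norm for every cutoff $\gamma$, which is exactly the asserted local flat convergence.

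I do not expect a genuine obstacle here; the only things to be checked carefully are that $d$ does not destroy the localization — handled by the auxiliary cutoff $\gamma'$ and the Leibniz rule — and that the grading/sign conventions of Section \ref{s2} are respected so that the formula for $\partial\eT_t$ matches on both sides. One could alternatively phrase the last step by remarking that $\eT_t(\omega)$ is locally normal (its mass and that of its boundary are locally bounded, since $\omega$ and $d\omega$ are vertically bounded), hence the limit is locally flat and $d$ passes to the limit; but the direct mass–to–flat continuity argument above is the shortest.
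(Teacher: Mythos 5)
Your argument is correct and is essentially the paper's own proof: the paper also splits $\partial\eT_t(\omega)$ into $\eT_t(d\omega)$, handled by mass-to-flat convergence (the paper invokes local normality and Federer 4.1.17), and $d[\eT_t(\omega)]$, handled by the inequality $\mathbf{F}(dT)\leq\mathbf{M}(T)$ applied to the mass-convergent sequence $\eT_t(\omega)$. The only difference is that you spell out, via the cutoff $\gamma$, the auxiliary $\gamma'$ and the Leibniz rule, the localization step that the paper leaves implicit.
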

\begin{proof} The limit $\lim_{t\ra \infty}\eT_t(d\omega)=\eT_{\infty}(d\omega)$ holds in the mass topology and $\eT_{t}(d\omega)$ are locally normal currents hence the limit holds also in the flat topology (see Federer 4.1.17). 

For the other term, notice that $d[\eT_t(\omega)]$ is locally normal due to the boundedness of $d\omega$ and $\eT_t(\omega)\ra \eT_{\infty}(\omega)$ in the mass norm hence $d[\eT_t(\omega)]\ra d[\eT_{\infty}(\omega)]$ in the flat norm, due to the obvious inequality:
\[ {\bf F}(dT)\leq {\bf M}(T).
\]
\end{proof}
\begin{corollary}\label{cltran}   If $P\ra B$ has compact fibers and $\omega\in\Omega^k(P)$ is closed, the following transgression formula of closed currents holds:
\[ \lim_{t\ra\infty} \phi_t^*\omega-\phi_0^*\omega=(-1)^{|\omega|}d[\eT_{\infty}(\omega)],
\]
and  $\displaystyle\lim_{t\ra \infty} \phi_t^*\omega$ is a locally flat, closed current on $B$. 
\end{corollary}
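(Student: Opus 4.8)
The plan is to obtain the corollary as a direct specialization of Theorem \ref{t1}. First I would observe that the extra hypotheses of Theorem \ref{t1} are automatic or nearly so here: since the fibers of $P\ra B$ are compact, the flow of $X$ is complete and every smooth form on $P$ is vertically bounded, so both $\omega$ and $d\omega$ are vertically bounded; the only remaining hypothesis, weak geometric atomicity of $\phi_0$, is the standing assumption on the section carried over from the ambient set-up. Thus Theorem \ref{t1} applies and gives $\partial\eT_t(\omega)\to\partial\eT_{\infty}(\omega)$, locally in the flat norm on $B$.

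The next step is to rewrite both sides using $d\omega=0$. By the definition of the boundary operator, $\partial\eT_t(\omega)=\eT_t(d\omega)+(-1)^{|\omega|}d[\eT_t(\omega)]=(-1)^{|\omega|}d[\eT_t(\omega)]$, and similarly $\partial\eT_{\infty}(\omega)=(-1)^{|\omega|}d[\eT_{\infty}(\omega)]$ because $\eT_{\infty}(d\omega)=0$. On the other hand, the identity recorded just above Theorem \ref{t1} reads $\partial\eT_t(\omega)=[\phi_t^*\omega]-[\phi_0^*\omega]$ as currents on $B$, where $[\alpha]$ denotes the current $\eta\mapsto\int_B\alpha\wedge\eta$. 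Combining these, $[\phi_t^*\omega]-[\phi_0^*\omega]\to(-1)^{|\omega|}d[\eT_{\infty}(\omega)]$ locally in the flat norm; since $[\phi_0^*\omega]$ is a fixed current, $\lim_{t\ra\infty}\phi_t^*\omega$ exists and equals $\phi_0^*\omega+(-1)^{|\omega|}d[\eT_{\infty}(\omega)]$, which is the claimed transgression formula.

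It then remains to check the two structural claims about the limit current $L:=\phi_0^*\omega+(-1)^{|\omega|}d[\eT_{\infty}(\omega)]$. For closedness, apply $d$: the first term contributes $[\phi_0^*d\omega]=0$ and the second is killed by $d\circ d=0$, so $dL=0$. For local flatness, $\phi_0^*\omega$ is a smooth form and $\eT_{\infty}(\omega)$ has $L^1_{\loc}$ coefficients by the remark following Proposition \ref{p1}, hence has locally finite mass; the exterior derivative of a current of locally finite mass is locally flat by the very definition of the flat norm, and a finite sum of locally flat currents is locally flat, so $L$ is locally flat.

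I do not expect a real obstacle: essentially all the analytic content is already packaged in Theorem \ref{t1} and Proposition \ref{p1}. The only points demanding care are bookkeeping ones — confirming that weak geometric atomicity of $\phi_0$ is indeed available (it is the one input to Theorem \ref{t1} not supplied by compactness of the fibers), and keeping the sign $(-1)^{|\omega|}$ and the shifted grading convention on $\Omega_*(B)$ straight so that the operator "$d$ on currents" appearing on the right-hand side is the one dual to $d$ on compactly supported test forms.
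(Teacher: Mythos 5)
Your proposal is correct and follows the route the paper intends: Corollary \ref{cltran} is exactly Theorem \ref{t1} specialized to closed $\omega$ (compactness of the fibers supplying completeness of the flow and vertical boundedness of $\omega$ and $d\omega$), combined with the displayed identity $\partial\eT_t(\omega)(\eta)=\int_B\phi_t^*\omega\wedge\eta-\int_B\phi_0^*\omega\wedge\eta$ and the vanishing of the $\eT_t(d\omega)$ terms. Your closing remarks on closedness and local flatness of the limit, and on the need to keep the weak geometric atomicity hypothesis on $\phi_0$ in force, are exactly the bookkeeping the paper leaves implicit.
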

\begin{remark} This section could have been written almost entirely in the more general context of fiber bundles $\eP\ra B$ with fiber a Banach manifold $\eF$. We  return to these aspects in \cite{Ci2}. 
\end{remark}

In order to obtain more refined results about the limits of currents above we need to restrict our attention to nice vector fields $X$.

\section{\bf {Locally Morse-Bott-Smale vector fields and normal sections}}\label{section 3}

In \cite{HL3}, Harvey and Lawson introduce a class of gradient Morse flows on a compact  manifold $M$, that satisfy certain desirable properties which imply that 
\begin{equation}
\label{HLMS} \lim_{t\ra \infty} \phi_t^*\alpha=\sum_{p} \left(\int_{U_p} \alpha\right)\cdot [S_p],
\end{equation}
where $\phi_t:M\ra M$ is the flow, $\alpha$ is differential form on $M$ and $U_p/S_p$ are the unstable/stable manifold of a critical point $p$.  One of these desirable properties which in particular gives meaning to (\ref{HLMS}) is that $S_p$ has finite volume for all critical $p$ which  (together with an orientation) turns it into a current $[S_p]$ on $M$. Moreover they prove that these properties are satisfied by the gradient flow of a Morse function for which the stable and unstable manifolds of every pair of critical points intersect transversely, the so called Morse-Smale flows. Not long after,  Latschev in \cite{La} generalized their result to Morse-Bott  gradient flows satisfying a natural  transversality condition a la Smale.  Let us recall some definitions.

Let $M$ be a Riemannian manifold not necessarily compact and $f:M\ra \bR$ a smooth function whose gradient generates \emph{a complete} gradient flow: $\phi:\bR\times M\ra M$.
\begin{definition} The function $f$ is called:
\begin{itemize}
\item[(a)] Morse-Bott if the critical set of its gradient $\nabla f$ is a union of disjoint submanifolds and the Hessian  $\Hess_pf$ at a point $p$ in a critical manifold $F$ is non-degenerate on the orthogonal complement of $T_pF$.  Denote by $S(F)$ and $U(F)$, respectively the stable and unstable sets of $F$ which are unions of stable/unstable sets of points:
\[ S(F):=\{ p\in M ~|~\lim_{t\ra \infty}\Phi(t,p)=q \in F \}\qquad  S(F)=\bigcup_{q\in F} S(q) \]
\[ U(F):=\{p\in M ~|~\lim_{t\ra -\infty} \Phi(t,p)=q\in F\} \qquad U(F)=\bigcup_{q\in F} U(q).\]
\item[(b)] Morse-Bott-Smale if it has the extra property that for any two critical manifolds $F$ and $F'$ and for any $p\in F$ and $q\in F'$ the following manifolds are transversal:
\[ U(p)\pitchfork S(F')\qquad S(q)\pitchfork U(F).
\]
\end{itemize}
\end{definition}
One can speak of manifolds instead of just sets in the previous definitions because of the following result (see \cite{AB}, Proposition 3.2):
\begin{theorem} The stable and unstable manifolds $S(F)$ and $U(F)$ are images of injective immersions: $\eS:\nu^s(F)\ra M$ and $\eU:\nu^u(F)\ra M$, where $\nu^{s/u}(F)$ are those bundles over $F$ resulting from the decomposition of the $\Hess f\bigr|_{TF^{\perp}\times TF^{\perp}}$ into its negative and positive eigenspaces. Moreover the endpoint maps $S(F)\ra F$ and $U(F)\ra F$
\[  p\ra \lim_{t\ra \infty} \Phi(t,p)\; \; \mbox{and}\;\; p\ra \lim_{t\ra -\infty}\Phi(t,p)
\]
are smooth and restricted to a neighborhood of $F$ have the structure of locally trivial fiber bundles.
\end{theorem}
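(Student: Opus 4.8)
\emph{Proof proposal.} This is the Morse--Bott version of the classical stable manifold theorem, and the plan is to reduce everything to a local model near $F$ and then flow that model out. First I would record that the Morse--Bott hypothesis says precisely that $F$ is a normally hyperbolic invariant submanifold for the complete gradient flow $\Phi$: along $F$, the linearization of $\Phi$ preserves the normal splitting $\nu(F)=\nu^s(F)\oplus\nu^u(F)$ into the negative and positive eigenspaces of $\Hess f|_{TF^\perp\times TF^\perp}$, acting as a contraction on $\nu^s(F)$ and as an expansion on $\nu^u(F)$. That $\nu^s(F)$ is the \emph{negative} eigenspace is forced by the fact that $f$ strictly increases along nonconstant gradient trajectories, so a trajectory limiting onto $F$ as $t\ra+\infty$ does so through sublevel, i.e. negative-eigenspace, directions. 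Concretely, the Morse--Bott lemma supplies a tubular neighborhood of $F$ and fiber coordinates $(v^s,v^u)$ on $\nu(F)$ in which $f=f|_F+\tfrac12(|v^u|^2-|v^s|^2)$; for a suitable metric near $F$ one can even arrange the gradient flow to be linear there, $\dot v^s=-v^s$ and $\dot v^u=v^u$. (If one prefers to avoid the smooth Morse--Bott lemma, all of this also follows from the Hirsch--Pugh--Shub theory of normally hyperbolic invariant manifolds.)

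From this local picture I would read off the local stable and unstable manifolds $W^s_{\loc}(F)=\{v^u=0\}$ and $W^u_{\loc}(F)=\{v^s=0\}$: embedded submanifolds, forward- resp. backward-invariant, tangent to $\nu^s(F)\oplus TF$ resp. $\nu^u(F)\oplus TF$ along $F$. In the model coordinates $W^s_{\loc}(F)$ is the part of the subbundle $\nu^s(F)$ lying in the tubular neighborhood, a disk bundle over $F$, and the endpoint map $p\mapsto\lim_{t\ra+\infty}\Phi(t,p)$ is simply its bundle projection; hence the endpoint maps, restricted to a neighborhood of $F$, are locally trivial fiber bundles, which is the last assertion of the theorem.

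Next I would globalize by flowing the local model out: $S(F)=\bigcup_{t\ge 0}\Phi(-t,W^s_{\loc}(F))$ and $U(F)=\bigcup_{t\ge 0}\Phi(t,W^u_{\loc}(F))$. Each piece is the diffeomorphic image under $\Phi(\mp t,\cdot)$ of an embedded submanifold, and the pieces are nested, so $S(F)$ and $U(F)$ acquire smooth manifold structures for which the inclusions into $M$ are injective immersions. To present $S(F)$ as the image of an injective immersion $\eS:\nu^s(F)\ra M$, I would push the disk bundle $W^s_{\loc}(F)$ out by the flow and reparametrize its fibers radially onto the full fibers of $\nu^s(F)$, so that $\eS$ carries $\nu^s_q(F)$ bijectively onto the stable manifold $S(q)$ of $q\in F$; injectivity then follows because distinct points of $F$ have disjoint stable manifolds and $\eS$ is a reparametrized homeomorphism on each fiber, while the immersion property is local and holds on each flowed-out piece. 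The time-reversed construction gives $\eU:\nu^u(F)\ra M$. Finally, smoothness of the \emph{global} endpoint map $S(F)\ra F$ is automatic: near any $p\in S(F)$ it equals $e^s_{\loc}\circ\Phi(t,\cdot)$ for $t$ so large that $\Phi(t,p)\in W^s_{\loc}(F)$, and both factors are smooth.

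The step I expect to be the main obstacle is the first one: producing a genuinely \emph{smooth} local normal form — equivalently, the smooth stable manifold theorem together with the fiber-bundle structure of its endpoint map — in the Morse--Bott setting, since the Hartman--Grobman-type conjugacy to the linearization is only continuous. Once the smooth local model and its disk-bundle structure are secured, the globalization and the verifications of injectivity and smoothness are routine.
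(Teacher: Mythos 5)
The paper does not actually prove this statement: it is quoted as a known result from \cite{AB} (Proposition 3.2), so there is no internal proof to compare yours against line by line. Judged on its own, your outline follows the standard route (local analysis near $F$, then flow-out and globalization), and the globalization half is fine and routine; but there is a genuine gap exactly where you flag it, plus one reduction step that is not legitimate as written.

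First, the remark that ``for a suitable metric near $F$ one can arrange the gradient flow to be linear'' cannot be used here: the Riemannian metric is part of the data, and changing it changes $\nabla f$, hence the flow $\Phi$ and the very sets $S(F)$, $U(F)$ whose structure you are trying to establish. Relatedly, in Morse--Bott coordinates for the \emph{given} metric the local stable set is in general not the coordinate subbundle $\{v^u=0\}$; it is only tangent to $\nu^s(F)\oplus TF$ along $F$, so identifying it as a smooth disk bundle already requires a stable manifold theorem for the actual flow, not the linear model. Second, and this is the core of the theorem, what is needed is not merely a smooth local invariant manifold $W^s_{\loc}(F)$ but the smoothness of its partition into the stable manifolds $S(q)$ of the individual points $q\in F$, i.e. that the endpoint map is a smooth locally trivial fibration near $F$. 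This is the strong stable foliation; it does not follow from a Hartman--Grobman-type conjugacy (only topological, as you note), nor is it an immediate corollary of the bare existence statement of normally hyperbolic invariant manifold theory --- one must use that the internal dynamics on $F$ is trivial (every point of $F$ is fixed) to get smoothness of the fibers and of their dependence on $q$. That is precisely the content of the cited Proposition 3.2 of \cite{AB}, and in your write-up it is the step you yourself declare to be ``the main obstacle'' and leave unproved. So the proposal is a correct-looking outline whose analytic heart is assumed rather than proven; completing it means either carrying out a fibered graph-transform/Perron argument over $F$ or explicitly invoking the invariant-manifold literature at that point, which is in effect what the paper does by citing \cite{AB}.
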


The other fundamental result about Morse-Bott functions is the existence of nice coordinates around critical manifolds a result better known as the Morse-Bott lemma:
\begin{lemma}\label{MBL} Let $f:M\ra \bR$ be a Morse-Bott function and let $F$ be a connected component of the critical set of $f$. For any $p\in F$ there exists a local chart of $M$ around $p$ and a local splitting of the normal bundle of $F$:
\[ \nu F=\nu^+(F)\oplus \nu^{-}(F),
\]
such that $f$ in these coordinates assumes the form:
\[ f(x,y,z)= c+\frac{1}{2}(|x|^2-|y|^2),\quad\quad \forall z\in B, x\in \nu^{+}(F), y\in \nu^{-}(F).
\]
\end{lemma}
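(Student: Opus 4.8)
The statement to be proved is the Morse--Bott lemma (Lemma \ref{MBL}). The plan is to deduce it from the classical Morse lemma by treating it as a \emph{parametrized} statement, with the parameter running over a coordinate patch of $F$ near $p$, and to diagonalize the resulting family of normal Hessians by a $z$-dependent completing-the-square procedure (the Moser/homotopy method would be an equally viable alternative).

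\emph{Step 1 (adapted coordinates).} First I would pick a coordinate chart of $M$ centered at $p$ in which $F$ becomes a linear subspace: coordinates $(u,z)\in\bR^m\times\bR^{d}$ with $m=\codim F$, $d=\dim F$, so that $F=\{u=0\}$ locally and $\partial_{z_1},\dots,\partial_{z_d}$ span $TF$ along $F$; such a chart exists since $F$ is an embedded submanifold. After a further linear change of the $u$-variables with coefficients depending smoothly on $z$ (using that $\Hess_{(0,z)}f|_{T_{(0,z)}F^{\perp}}$ is non-degenerate, one diagonalizes it at $z=0$ and keeps one entry from vanishing nearby), I may assume the $u$-Hessian of $f$ at the origin is $\mathrm{diag}(1,\dots,1,-1,\dots,-1)$.

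\emph{Step 2 (Taylor expansion).} Since $df\equiv0$ on the connected manifold $F$, $f$ is constant $=c$ on $F$ near $p$; set $g:=f-c$. Then $g(0,z)=0$ and $\partial_{u_i}g(0,z)=0$. Two applications of Hadamard's lemma in the $u$-variables (with $z$ as a smooth parameter) give
\[ g(u,z)=\sum_{i,j=1}^m u_i\,u_j\,h_{ij}(u,z),\qquad h_{ij}=h_{ji}\ \text{smooth}, \]
where $H(z):=\bigl(2h_{ij}(0,z)\bigr)$ is precisely the matrix of $\Hess_{(0,z)}f$ in the normal directions; by the Morse--Bott hypothesis $H(z)$ is non-degenerate for every $z$, and its signature is locally constant, hence equal to $(\dim\nu^{+}(F),\dim\nu^{-}(F))$ on the (connected) patch.

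\emph{Step 3 (parametrized diagonalization) and conclusion.} I would then diagonalize $u\mapsto\sum u_iu_jh_{ij}(u,z)$ by induction on $m$ as in the classical Morse lemma, but tracking smoothness in $z$. Non-degeneracy of $H(z)$ lets me arrange, after a smooth $z$-dependent linear change, that $h_{11}(0,z)\neq0$, hence $h_{11}(u,z)\neq0$ on a neighborhood of $\{u=0\}$; then
\[ v_1:=|h_{11}(u,z)|^{1/2}\Bigl(u_1+\sum_{j\geq2}\tfrac{h_{1j}(u,z)}{h_{11}(u,z)}\,u_j\Bigr),\qquad v_i:=u_i\ (i\geq2), \]
is a local diffeomorphism fixing $\{u=0\}=F$ (its differential at the origin is triangular with nonzero diagonal) and converts $g$ into $\pm v_1^2+\sum_{i,j\geq2}v_iv_j\,\tilde h_{ij}(v,z)$ with new smooth symmetric coefficients. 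Iterating $m$ times yields coordinates $(w_1,\dots,w_m,z)$ with $g=\sum_i\epsilon_iw_i^2$, $\epsilon_i=\pm1$, and by Step 2 exactly $\dim\nu^{+}(F)$ of the $\epsilon_i$ equal $+1$. Relabelling the positive ones as $\sqrt2\,x_k$ and the negative ones as $\sqrt2\,y_\ell$ and setting $\nu^{+}(F):=\Span(\partial_{x_k})$, $\nu^{-}(F):=\Span(\partial_{y_\ell})$ gives the required chart together with the splitting $\nu F=\nu^{+}(F)\oplus\nu^{-}(F)$ and $f=c+\tfrac12(|x|^2-|y|^2)$.

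\emph{Main obstacle.} The only point that is not purely formal is Step~3: one must make each change of variables depend smoothly on the parameter $z$ and must secure a non-vanishing pivot $h_{ii}$ at every stage of the induction. Both follow from the non-degeneracy of $H(z)$, which persists on a neighborhood of $p$ in $F$ and, by continuity, on a neighborhood of $\{u=0\}$ in the chart; the remaining checks — that each substitution is a genuine local diffeomorphism preserving $F$, and that the signature count does not jump — are routine.
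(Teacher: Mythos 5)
Your argument is correct: it is the standard proof of the Morse--Bott lemma via the parametrized Morse lemma (adapted coordinates in which $F$ is linear, two applications of Hadamard's lemma to write $f-c$ as a quadratic form with smooth coefficients, then completing the square inductively while tracking smooth dependence on the parameter $z$ and keeping a non-vanishing pivot, which the non-degeneracy of the normal Hessian guarantees after shrinking the chart at each of the finitely many stages). Note that the paper itself offers no proof of this lemma --- it is quoted as a classical result --- so there is nothing to compare against; the only points deserving explicit mention in a written-out version are (i) the identification of the $u$--$u$ block of the Hessian at $(0,z)$ with the induced non-degenerate form on $T_{(0,z)}M/T_{(0,z)}F\cong\nu_{(0,z)}F$ (the full Hessian annihilates $T F$), and (ii) the fact that after each completion of the square the residual form in the remaining variables is again non-degenerate at $u=0$, so the induction can continue; both are routine and your sketch handles them correctly.
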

\begin{remark} It is good to be aware of the fact that a  Morse-Bott function can not always be turned into a Morse-Bott-Smale function by a change of metric as is the case for classical Morse functions. One illuminating example can be found in \cite{La}, Remark 2.4. 
\end{remark}

We now return to the set-up of the previous section. Suppose $P\ra B$ is a fiber bundle with a Riemannian metric on $P$.  We will assume that for every $b\in B$ there exists a function $f:P_b\ra\bR$  such that the gradient flow of $f$ with respect to the metric induced from $P$ is  Morse-Bott-Smale   and $X\bigr|_{P_b}=\grad f$. This is the vertical "niceness" of $X$.

 We will require that $X$ be  locally trivial in the horizontal directions.   This means that around every $b\in B$ there exists a chart $U$ and a local trivialization $P\bigr|_{U}\simeq U\times M$ with corresponding trivialization $VP\bigr|_{U}\simeq U\times TM$, such that in these coordinates:
\[ X(u,p)=(u, \tilde{X}(p)),\quad\quad \forall u\in U, p\in M.
\]
the important point being that $\tilde{X}$ does not depend on $u$.
\begin{definition}\label{MBSdef} A vertical vector field $X:P\ra VP$ is called locally Morse-Bott-Smale if in every fiber it is gradient Morse-Bott-Smale and if it satisfies the horizontal local triviality condition described  above.
\end{definition}

\begin{example}\label{univmodel} One universal method to construct  flows as above is as follows. Suppose that on the manifold $M$ we fix a function $f$ whose gradient $X$ is Morse-Bott-Smale. Take $G\subset \Diffeo(M)$ to be a finite dimensional Lie subgroup of diffeomorphisms of $F$ that commute with the flow diffeomorphisms $\phi_t$ of $X$, i.e.
\[ \psi\in G\Leftrightarrow \psi \phi_t=\phi_t\psi \qquad\forall t
\]
Infinitesimally, this can be written as $d_p\psi(X_p)=X_{\psi(p)}$ for all $p\in F$. Now take $P\ra B$ to be a fiber bundle with fiber $F$ and structure group $G$. Then  $X$ induces a locally constant Morse-Bott-Smale vector field on $P$ as follows. Consider a principal bundle $\tilde{P}$ with structure group $G$ such that the associated bundle via the natural action of $G$ on $M$ is $P$. Consider the vector field on $\tilde{P}\times F$ which is constant in the $\tilde{P}$ variable and equal to $X$ in the $M$ variable. Due to the invariance property mentioned before this vector field descends to a vector field on $P$.

One parameter families of invariant diffeomorphisms $\psi$  arise  via  (complete) vector fields $Y$ such that $[X,Y]=0$. The flow diffeomorphisms of $Y$ satisfy the above property.
\end{example}

While we only require that $X$ be  locally a vertical gradient vector field, in all the examples we list below $X$ is in fact a vertical gradient of a globally defined function $f$.

\begin{example} Let $E,F\ra B$ be two complex vector bundles on $B$ with $E$ of rank $k$. Suppose they are endowed with hermitian metrics. Let $\Gr_k(E\oplus F)\ra B$ be the bundle of Grassmannians of subspaces inside $E\oplus F$ of the same dimension as the rank of $E$. The metrics on $E$ and $F$ induce a vertical metric on $\Gr_k(E\oplus F)$. Let $f:\Gr_k(E\oplus F)\ra \bR$ be defined by
\[ f(L)=\Real\Tr(\epsilon P_L),
\] 
where $\epsilon:E\oplus F\ra E\oplus F$ is the reflection in $E$ and $P_L:E\oplus F\ra E\oplus F$ is the orthogonal projection onto $L$. The fiberwise gradient of $f$ is a vector field of the type we described above. 
\end{example}
\begin{example} Let $E\ra B$ be a real vector bundle with a Riemannian metric and let $P:=S(\bR\oplus E)\ra B$ be the spherical bundle of $\bR\oplus E$. Let $f$ be the Morse function which is the restriction to $P$ of the projection on the first coordinate of $\bR\oplus E$. The vertical  gradient flow of  $f$ is the "height function" gradient flow in each fiber. 
\end{example}
\begin{example} Let $U(E)\ra B$ be the fiber bundle with fiber the unitary endomorphisms of a hermitian bundle $E\ra B$. The function
\[f(U)=\Real\Tr(U),
\]
is a function whose fiberwise gradient satisfies the condition above. More generally, if there exists $A\in\End(E)$, self-adjoint such that $A$ induces a decomposition of $E$ into  eigenbundles $E=E_1\oplus E_2\oplus\ldots \oplus E_k$ in such a way that $A\bigr |_{E_i}\equiv \lambda_i \id_{E_i}$ then
\[ f(U)=\Real\Tr (AU),
\] 
is also a a locally constant Morse-Bott-Smale function.
\end{example}

The local triviality condition for $X$ plays a crucial role. It allows to write $X$ as a  Morse-Bott-Smale gradient vector field on open subsets of $P$. Indeed one can use the local trivialization $P\bigr|_{U}\simeq U\times M$ and consider the Morse-Bott-Smale vector field $\tilde{X}=\grad{f}$ on $M$. Then for any choice of horizontal metric on $U$,  the vector field
\[ (u,p)\ra (u,\tilde{X}(p))
\]
is the gradient of $\tilde{f}(u,p)=f(p)$, provided one puts the direct sum metric on $TP\bigr|_{U}=VP\oplus HP$.  Hence one can make use of the technique introduced by Harvey and Lawson in \cite{HL3} and more generally by Latschev in \cite {La} for  the gradient of $\tilde{f}$ (see Appendix \ref{appA} for details).
\begin{definition} Let $\phi_0:B\ra P$ be a section. It is called \emph{s-normal} with respect to $X$ if   $\phi_0$ is transversal to all stable manifolds $S(F)$ of $X$.

 A section is called \emph{u-normal} if a dual statement holds with respect to the unstable manifolds. 

A section which is both s-normal and u-normal will be called normal. 
\end{definition}
The next result is inspired by Proposition 9.4 in \cite{HL2}. However, the proof presented in that article does not generalize to this more general situation.
\begin{remark} \label{glrem} In fact, we were not able to find a justification for the unproved claim made in Lemma 10.1 of the same article  (on which Proposition 9.4 is based) regarding certain coordinates  with respect to which the section is of \emph{product type}.  
\end{remark}
\begin{theorem}\label{gan} Let $P\ra B$ be a fiber bundle with compact fiber and a locally Morse-Bott-Smale vertical vector field $X$. An s-normal section is strongly geometrically atomic on the positive semi-axis with respect to $X$. Dually, an u-normal section is strongly geometrically atomic on the negative semi-axis.
\end{theorem}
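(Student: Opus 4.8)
The plan is to reduce the statement to a purely local assertion over $B$, pass to a product trivialization in which $X$ is a genuine Morse--Bott--Smale gradient, and then invoke the Bott--Samelson resolution with corners from the Appendix to control the volume near $t=\infty$. So, fix $b_0\in B\setminus Z$; by definition I must produce a relatively compact neighborhood $U\ni b_0$ with $U\subset B\setminus Z$ and $\xi([0,\infty)\times U)$ of finite $(n+1)$-Hausdorff measure. Choose $U$ inside a chart $U'$ realizing the horizontal local triviality of $X$, so that $P|_{U'}\simeq U'\times M$, $VP|_{U'}\simeq U'\times TM$, and $X(u,p)=(u,\tilde X(p))$ with $\tilde X=\grad f$ a fixed Morse--Bott--Smale field on the compact fiber $M$. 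Replacing the metric on $P|_{U'}$ by a direct-sum metric $g_V\oplus g_H$ alters the induced metric on $P\times P$ only up to uniform equivalence over the compact set $\pi^{-1}(\bar U)\times\pi^{-1}(\bar U)$, hence does not affect finiteness of the $(n+1)$-Hausdorff measure; and with respect to $g_V\oplus g_H$ the field $X|_{U'\times M}$ is the gradient of $\tilde f(u,p):=f(p)$, whose critical manifolds are $U'\times F_i$ and whose stable manifolds are $U'\times S(F_i)$. Writing $\phi_0(u)=(u,s(u))$, the s-normality of $\phi_0$ is precisely transversality of $s$ --- equivalently of $\graf(s)\subset U'\times M$ --- to every $S(F_i)$, and $\xi(t,u)=\big((u,\phi^M_t(s(u))),(u,s(u))\big)$, where $\phi^M_t$ is the flow of $\tilde X$.

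Next I would observe that $\xi|_{[0,\infty)\times U}$ is an injective immersion: its second component $(t,u)\mapsto\phi_0(u)$ already has rank $n$ (as $\phi_0$ is an embedding) with kernel spanned by $\partial_t$, and on that kernel the first component has differential $\partial_t\mapsto X\neq0$, so $d\xi$ is injective; injectivity of $\xi$ itself follows because $\phi_0$ is injective and a nonstationary trajectory (here $u\notin Z$) is injective in time. Hence $\xi([0,\infty)\times U)$ is an $(n+1)$-dimensional immersed submanifold, and for $t$ ranging over a compact interval it is a compact smooth image, of finite measure; the only possible source of infinite volume is the limit $t\to\infty$, where the forward $\tilde X$-flowlines issuing from $\graf(s)$ converge into the critical manifolds $F_i$.

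To control this limit I would apply the Appendix's version of Latschev's technical lemma to the compact manifold $M$, the Morse--Bott--Smale field $\tilde X$, and the submanifold $\graf(s)$ transversal to all $S(F_i)$: it furnishes a compact manifold with corners together with a smooth map onto the closure of the forward flowout $\{(\phi^M_t(x),x):x\in\graf(s),\,t\ge0\}$, restricting to a diffeomorphism on the interior. Assembling this construction over the relatively compact $\bar U$ --- legitimate since the whole flow is constant in the $U$-direction, so the resolution of the flowout is just the fiberwise one carried along $\bar U$ with bounded extra contribution --- yields a compact manifold with corners surjecting smoothly onto $\overline{\xi([0,\infty)\times U)}$, again a diffeomorphism on the interior. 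A smooth image of a compact set has finite Hausdorff measure in the dimension of the domain, so $\xi([0,\infty)\times U)$ has finite $(n+1)$-Hausdorff measure. This establishes strong geometric atomicity on the positive semi-axis; the $u$-normal case follows verbatim after replacing $f$ by $-f$, which interchanges stable and unstable manifolds.

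The routine part of all this is the list of reductions: localizing on $B$, trivializing, switching to a product metric, and splitting off the bounded base directions. The hard part will be the resolution-with-corners step --- which is exactly why the Appendix is needed: one must check, using the Morse--Bott coordinates of Lemma \ref{MBL} and the endpoint fibrations near each $F_i$, that the portion of $\graf(s)$ meeting $S(F_i)$ (a submanifold of codimension $\codim S(F_i)$, by transversality) has a forward flowout that glues smoothly onto a boundary stratum of the correct dimension, and that all these strata, indexed by which $S(F_i)$ a trajectory exits through, fit together into a single compact manifold with corners. It is in this step, and only here, that the s-normality hypothesis on $\phi_0$ is genuinely used.
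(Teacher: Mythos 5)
Your argument only covers half of the theorem's content, and it is the easier half. You fix $b_0\in B\setminus Z$ and produce a neighborhood $U$ with $\bar U\subset B\setminus Z$; this is essentially the paper's Case I (non-stationary $\phi_0(b_0)$), done by flowing the section out and resolving via the Appendix. But the statement, as it is used later (finiteness of $\phi([0,\infty)\times(\supp\eta\setminus Z))$ for arbitrary compactly supported $\eta$, and the local mass-norm convergence of $\Imag\xi_{[0,t]}$ in the proof of Theorem \ref{refhomf}), requires locally finite $(n+1)$-volume near \emph{every} point of $B$, including points $b_0\in Z$ where $\phi_0(b_0)$ lies on a critical manifold $F$. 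Your local-at-$B\setminus Z$ statement gives no uniform control as $b$ approaches $\phi_0^{-1}(S(F))$, and this is exactly where the volume could a priori blow up: nearby trajectories linger arbitrarily long near $F$ before crossing the level $f^{-1}(\delta)$, so the flowout over a neighborhood of $b_0\in Z$ is not the flowout of any single level-set slice to which the Bott--Samelson machinery applies directly. The paper's Case II handles this with two extra ingredients you do not supply: the blow-up $\widetilde W=[0,1]\times S^{k-1}\times\overline{B_{\bR^{n-k}}(0,1)}$ of $\phi_0^{-1}(S(F))$ together with the map $\theta_\delta$ of (\ref{eq2t31}), which extends smoothly across $\lambda=0$ and is transversal to the remaining stable bundles (so Latschev's resolution controls $\Theta_{\geq\delta}$), and the explicit reparametrization (\ref{eq3}) of the flow in Morse--Bott coordinates, which exhibits $\Theta_{\leq\delta}$ as the image of a smooth map on a compact set. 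Transversality of $\phi_0$ to $S(F)$ is what makes both steps work, so the s-normality hypothesis is used in a place your proposal never reaches.

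Two smaller points. First, even in your Case I the Appendix machinery takes as input a map from a manifold with corners into a \emph{regular level set}, completely transversal to the stable manifolds; $\graf(s)$ is not contained in a level set, so one must first flow it (backwards) into $f^{-1}(f(\phi_0(b_0))-\delta)$ as the paper does before starting the level-by-level blow-ups. Second, to control the graph flowout $\{(\phi^M_t(x),x)\}$ you should run the construction in $M\times M$ (equivalently $P\times_BP$) for the vector field $(\tilde X,0)$, which is again locally Morse--Bott--Smale and for which s-normality of $\phi_0$ gives s-normality of $\xi_0$ --- this is the paper's reduction of strong to weak atomicity; invoking the lemma for $M$ and $\tilde X$ alone does not literally produce a resolution of the set $\xi([0,\infty)\times U)$.
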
 


\begin{proof} We use the same technique Latschev \cite{La} used to prove that any stable or unstable manifold of a Morse-Bott-Smale flow inside a compact manifold has finite volume. For the convenience of the reader, Appendix \ref{appA} contains a detailed exposition of these techniques. 

  Notice first that strong geometric atomicity of $\phi_0:=s$ is just weak geometric atomicity for the section $\xi_0:B\ra P\times_BP$ with respect to the vertical vector field $(X,0)$ (see Example \ref{exlahl}). This vector field is locally Morse-Bott-Smale if $X$ is and the $s$-normality of $\phi_0$ implies the $s$-normality of $\xi_0$. Hence, in what follows, we will prove weak geometric atomicity for $\phi_0$ with the understanding that  $P$  plays the role of $P\times_BP$ and $\phi_0$ that of $\xi_0$. 

The statement is local in $B$ so we can fix $b_0\in B$ and look at a small enough neighborhood $U$ of $b_0$. We distinguish two cases:

\vspace{0.3cm}
\noindent
\emph{Case I:} The point $\phi_0(b_0)$ is not stationary for $X$. We can take $b_0\in U$ such $\phi_0(b)$ is not stationary for $X$ for all $b\in U$. 

We can also assume without loss of generality that $\phi_0\bigr|_{U}\subset f^{-1}(f(\phi_0(b_0))-\delta)$ for some $\delta>0$. Indeed, let $\delta>0$ be such that $f(\phi_t(b))>f(\phi_0(b_0))-\delta$ for all $b\in U$ and $t\geq 0$ and there is no critical level in between $f(\phi_0(b_0))$ and $f(\phi_0(b_0))-\delta$.   Let $\sigma:U\ra f^{-1}(f(\phi_0(b_0))-\delta)$ parametrize the manifold $\phi ((-\infty,0]\times U)\cap f^{-1}( f(\phi_0(b_0))-\delta)$ and let $\Sigma$ be the flowout of $\sigma(\bar{U})$.  Notice that $\phi([0,\infty)\times U)\subset \Sigma$. 

The techniques presented in Appendix \ref{appA} apply in order to construct a compact manifold with corners $\tilde{\Sigma}$ and a smooth map $\Pi:\tilde{\Sigma}\ra P$ such that  $\Imag\Pi=\overline{\Sigma}$ and $\Pi$ is a diffeomorphism from the interior points to an open dense set of $\Sigma$.    Here is where the transversality condition on $\phi_0$ is crucial (Lemma \ref{transvlemma}).

\vspace{0.3cm}

\noindent
\emph{Case II:} The point $\phi_0(b_0)$ is stationary for $X$. This is the more challenging one.

 Suppose without loss of generality that $\phi_0(b_0)\in F$, a critical manifold that lies entirely at energy level $0$ and that $\delta>\phi_0(b)>-\delta$ for all $b\in U$  for a $\delta>0$ small. We consider the nice local coordinates that the Morse-Bott Lemma \ref{MBL} provides.
 
  We  break $\Theta:=\overline{\phi([0,\infty)\times U)}$ into two pieces: $\Theta_{\geq\delta}:=\Theta\cap f^{-1}([\delta,\infty))$ and $\Theta_{\leq\delta}:=\Theta\cap f^{-1}([-\delta,\delta])$ and show  that both $\Theta_{\leq \delta}$ and $\Theta_{\geq \delta}$ have finite $(n+1)$-Hausdorff measure. Let $\Theta_{\delta}:=\Theta\cap f^{-1}(\delta)$.
 
We claim that  there exists a manifold with corners $\widetilde{W}$ of dimension $n$ and a map $\theta_{\delta}:\widetilde{W}\ra f^{-1}(\delta)$ transversal to all stable bundles of critical manifolds such that $\Theta_{\delta}\subset \Imag \theta_{\delta}$. For this end, let us write the map $\phi_0$ in local coordinates around $b_0$:
\[ \phi_0:\overline{B_{\bR^k}(0,1)}\times \overline{B_{\bR^{n-k}}(0,1)}\ra \bR^{k}\times \bR^{m}\times {\bR^p},\quad\quad \phi_0(a,b)=(a,\alpha(a,b),\beta(a,b)),
\]
where we chose the local coordinates around $b_0$ such that $p_1\circ \phi_0$ is the projection on the first factor of $\bR^{k}\times \bR^{n-k}$. This is possible because the transversality of $\phi_0$ with the stable manifold of $F$ is equivalent with $p_1\circ\phi_0$ being a submersion ($p_1:\bR^{k+m+p}\ra \bR^k$ is the projection).  By shrinking the neighborhood around $b_0$ even more we can assume that $|a|\cdot |\alpha(a,b)|\leq \epsilon$. 

We will keep in the back of our mind the fact that $p\geq n$ and the first  component of $\beta:\bR^n\ra \bR^n\times \bR^{p-n}$ is the identity since $\phi_0$ is a section and also due to the local triviality of the flow. 

  Let $\widetilde{W}:=[0,1]\times S^{k-1}\times \overline{B_{\bR^{n-k}}(0,1)}$ be the total space of the blow-up of $\{a=0\}$. Define (see also \ref{eqap1})
  \[ V:=\{(x,y,z)\in \bR^{k+m+p}~|~-2\delta\leq |x|^2-|y|^2\leq 2\delta,\;\; |x|\cdot |y|\leq \epsilon\}
  \] 
  \[ V_{\delta}:=\{(x,y,z)\in V~|~ |x|^2-|y|^2=2\delta,\;\; |x|\cdot |y|\leq \epsilon \}\]For $\delta>0$, let $\theta_{\delta}:\widetilde{W}\ra V_{\delta}:$ 
\begin{equation}\label{eq2t31} \theta_{\delta}(\lambda,v,b):=\left( v\sqrt{\sqrt{\delta^2+\lambda^2 |\alpha(\lambda v,b)|^2}+\delta},\frac{\lambda\alpha(\lambda v,b) }{\sqrt{\sqrt{\delta^2+\lambda^2 |\alpha(\lambda v,b)|^2}+\delta}}, \beta(\lambda v,b)\right).
\end{equation}
which is obtained as follows.  One  has a map:
 \[\chi_{\delta}: V\setminus \{x=0\}\subset \bR^{k+m+p}\ra V_{\delta}\]
  which takes a point $p\notin S(F)$ to the intersection of the flowline $p$ determines with the level set $f^{-1}(\delta)$ (see Remark \ref{flowlinepsi}).  For $\lambda\neq 0$, the map $\theta_{\delta}$ is  the composition of $\chi_{\delta}$ with $\phi_{0}\bigr|_{\{a\neq 0\}}$ and with  the blow up projection $(\lambda,v,b)\ra (\lambda v,b)$.  The expression (\ref{eq2t31})  makes it clear that $\theta_{\delta}$ extends smoothly to $\lambda=0$. 

From the above description we have that the image of $\theta_{\delta}$ is compact and contains the intersection of $\phi([0,\infty)\times U)$ with $f^{-1}{(\delta)}$. It follows that $\Theta_{\delta}\subset \Imag \theta_{\delta}$ because  $\overline{\phi([0,\infty)\times U)}$ is made up of (possibly) broken trajectories and each such trajectory is a limit of unbroken ones. Therefore we have in fact:
\[ \Theta_{\delta}=\overline{\phi([0,\infty)\times U)\cap f^{-1}(\delta)}.\]

The map $\theta_{\delta}$ is transversal to the other stable bundles of critical manifolds (see  Lemma \ref{transvlemma}) and therefore Latschev's blow-up technique gives us again that the flowout of $\Imag \theta_{\delta}$ has a "resolution" in the form of a compact manifolds with corners. 

In order to show that $\Theta_{\leq \delta}$ has finite volume, take another look at $\phi_t$ in local coordinates:
\[ \phi_t(a,b)=(e^ta,e^{-t}\alpha(a,b),\beta(a,b)),
\]
Taking $t=-\ln (s)$ for  $s>0$ we get that locally around the origin the flow is described by the image of the map:
\[ (s,a,b)\ra (s^{-1}a, s\alpha(a,b),\beta(a,b)).
\]
We precompose this with the change of variables $(s,a,b)\ra (s,sa,b)$ to get that the flow is described by the image of the differentiable  map:
\begin{equation}\label{eq3} (s,a,b)\ra(a, s\alpha(sa,b),\beta(sa,b)),
\end{equation}
which makes sense also for $s=0$. The condition that this image be contained in $f^{-1}([-\delta,\delta])$ implies that
\[ |a|^2-s^2\alpha(sa,b)\leq 2\delta.
\]
We can assume that $\alpha$ is bounded to begin with ($\phi_0$ is defined on a compact set)  and since $(a,b)$ varies within a compact set we get from the previous relation that there exists an upper bound on $s$. Therefore $\Theta_{\leq \delta}$ is in the image of the differentiable map (\ref{eq3}) defined on a compact set, say $[0,M]\times K$. Clearly  this image will have finite $(n+1)$-Hausdorff measure and this concludes the proof.

\end{proof}

We close this section with an obvious extension of Theorem \ref{gan}. We only needed compactness of the fiber in the above proof in order to guarantee that the sequence of blow-ups terminates and we end up with a compact manifold with corners. However, one does not need the full compactness of the fiber for this to work. One just needs  that the function $f$ that gives the gradient flow on the fiber has a maximum since everything will flow to the maximum critical manifolds of $f$. The rest is as before.
\begin{theorem} The first result of Theorem \ref{gan} is valid also for fiber bundles $P\ra B$ with non-compact fiber by requiring that the vertical vector field in one fiber is the gradient of a Morse-Bott-Smale function which is \underline{bounded above}. Similarly, the second result is valid for Morse-Bott-Smale functions bounded below.
\end{theorem}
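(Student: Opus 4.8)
The plan is to re-run the proof of Theorem \ref{gan} essentially verbatim, isolating the one place where compactness of the fiber $M$ entered and replacing it with the hypothesis on $f$. As in that proof, I would first reduce strong geometric atomicity on $[0,\infty)$ for $\phi_0$ to weak geometric atomicity on $[0,\infty)$ for the doubled section $\xi_0:B\ra P\times_BP$, $\xi_0(b)=(\phi_0(b),\phi_0(b))$, with respect to the vertical vector field $(X,0)$: this field is again locally Morse-Bott-Smale, it is the fiberwise gradient of $(p,q)\mapsto f(p)$, which is still bounded above (by $\sup f$), and $s$-normality of $\phi_0$ gives $s$-normality of $\xi_0$. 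So it suffices to show that for an $s$-normal section $\phi_0$ of a fiber bundle whose vertical field is the fiberwise gradient of a Morse-Bott-Smale function bounded above, the set $\phi([0,\infty)\times B\setminus Z)$ has locally finite $(n+1)$-Hausdorff measure.

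This being local in $B$, I would fix $b_0\in B$ and a relatively compact neighborhood $U$ contained in a trivializing chart, so that $X$ restricts there to a product $\widetilde X=\grad f$; put $K:=\phi_0(\overline U)$, a compact set, and $m:=\min_K f$. Since $f$ is non-decreasing along forward trajectories and bounded above by $C:=\max f$ (attained, as recalled in the paragraph preceding the theorem), the forward flowout $\phi([0,\infty)\times\overline U)$ and its closure $\Theta$ lie in the band $f^{-1}([m,C])$, and, trajectory by trajectory, every point of $K$ flows forward to a critical manifold at a level in $[m,C]$. At this point the two-case discussion in the proof of Theorem \ref{gan} applies unchanged: whether or not $\phi_0(b_0)$ is stationary, the Morse-Bott coordinates of Lemma \ref{MBL}, the explicit resolutions (\ref{eq2t31}) and (\ref{eq3}), and the transversality statement of Lemma \ref{transvlemma} are all constructions local around a single critical manifold and never use compactness of the fiber; they produce, over each critical band met by $\Theta$, a manifold with corners mapping smoothly onto the corresponding piece of $\overline\Theta$, diffeomorphically on a dense open subset, and the local volume bound is exactly as before.

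The only global fact that used compactness of $M$ is that iterating Latschev's blow-up (Appendix \ref{appA}) terminates in a \emph{compact} manifold with corners. Here termination is what the hypothesis buys us: because $f$ attains its maximum $C$, the recursion proceeds upward through the critical levels in $[m,C]$ that the flowout of $K$ actually meets and bottoms out once the flowout reaches the maximal critical manifolds; since between consecutive critical levels the flow is a trivial product over level sets, broken trajectories in $\Theta$ break only finitely often, so the resolution is assembled from finitely many compact pieces glued along corners and is itself compact. Consequently the resolving map is Lipschitz, and $\phi([0,\infty)\times U)$ has finite $(n+1)$-Hausdorff measure, as claimed. The dual assertion, for a $u$-normal section and an $f$ bounded below, follows by running the same argument for the reversed flow. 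The step I expect to need the most care is precisely this termination claim: making rigorous that, although $M$ is non-compact, the portion of the fiber swept out by the forward flowout of a compact section piece is "as good as compact" — i.e. that only finitely many critical manifolds are involved, and that the bounded-above hypothesis indeed forces the blow-up induction of Appendix \ref{appA} to close up.
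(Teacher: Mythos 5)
Your proposal takes essentially the same route as the paper: the paper's entire proof is the remark that compactness of the fiber was used in Theorem \ref{gan} only to guarantee that the sequence of blow-ups terminates in a compact manifold with corners, and that an $f$ bounded above (with everything flowing into its maximal critical manifolds) suffices for this, the rest being verbatim as before. Your extra detail -- the reduction to the doubled section, localization over a relatively compact $U$, and flagging the termination of the blow-up induction as the key point -- merely spells out what the paper leaves implicit, so the two arguments coincide.
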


 This suggests that the  applicability of Theorem \ref{gan} depends only on the gradient vector field in the fiber more than on the fiber itself. This does  not come as a surprise since the topology of the  manifold \emph{is determined} by the gradient of a Morse-Smale function.

 In Section \ref{Secnc} we discuss a  non-compact extension of Theorem \ref{gan} which is less trivial.

\section{\bf {A refined homotopy formula}}\label{Sect5}

Let  $\pi:P\ra B$ be  a fiber bundle of oriented manifolds  over an $n$-dimensional manifold $B$  endowed with a  vertical vector field $X\in\Gamma(VP)$ which is locally constant  Morse-Bott-Smale as in Section \ref{section 3}. Suppose that the function $f$ with vertical gradient  $X$  is bounded above. In particular, this happens if the fiber is compact. 

The plan is to prove the next result. 
\begin{theorem}\label{refhomf} Let $\phi_0:B\ra P$ be an s-normal section with respect to the field $X$ and let $\omega$ be a  form on $P$ of degree $k\leq n$.  Let $\tau_F:\phi_0^{-1}(S(F))\ra F$ be the composition  of the stable bundle projection $S(F)\ra F$ with $\phi_0$. The following identity of locally flat currents in $B$ holds:
\[\lim_{t\ra \infty} \phi_t^*\omega=\sum_{\codim{S(F)\leq k}}\Res_F^u(\omega)[\phi_0^{-1}(S(F))]
\]
where $\Res_{F}^u(\omega):=\tau_F^*\left(\displaystyle\int_{U(F)/F}\omega\right)$.
\end{theorem}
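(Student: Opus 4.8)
The plan is to reduce the convergence and identification of the limit current $\lim_{t\to\infty}\phi_t^*\omega$ to a careful local analysis near each critical manifold, using the Bott--Samelson resolutions with corners supplied by Latschev's technique (Appendix \ref{appA}) together with Theorem \ref{gan}. First I would work locally over a small chart $U\subset B$ around a point $b_0$. Since $\phi_0$ is $s$-normal, by Theorem \ref{gan} the section is strongly geometrically atomic, so $\phi([0,\infty)\times U\setminus Z)$ has locally finite $(n+1)$-Hausdorff measure; combined with vertical boundedness of $\omega$ (automatic here since the fiber is compact, up to the bounded-above hypothesis) this gives, via Proposition \ref{p1} and the boundary formula for $\partial\eT_t$, that $\eT_t(\omega)\to\eT_\infty(\omega)$ in the local mass norm and hence $\phi_t^*\omega$ converges to a locally flat current. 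The content of the theorem is therefore to \emph{identify} this limit.

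For the identification, I would split according to whether $\phi_0(b_0)$ is stationary. In the non-stationary case (Case I of Theorem \ref{gan}), the flowout $\phi([0,\infty)\times U)$ sweeps out, in the limit, the union of the stable manifolds $S(F)$ that the forward trajectories accumulate on; here the key point is that for a generic parameter $b$ the forward trajectory converges to a point of some $F$, and the only contributions to $\lim\phi_t^*\omega$ as a current come from those $F$ with $\codim S(F)\le k$, since $\phi_0^{-1}(S(F))$ then has dimension $\ge n-k$ and can support a degree-$k$ current on $B$. The residue $\Res_F^u(\omega)=\tau_F^*\int_{U(F)/F}\omega$ appears because, in the Morse--Bott normal coordinates of Lemma \ref{MBL}, pulling $\omega$ back along the flow and letting $t\to\infty$ concentrates mass transverse to $S(F)$, and the transverse integral is exactly the fiber integral over the unstable directions $U(F)/F$, transported to $\phi_0^{-1}(S(F))$ by $\tau_F$. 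I would make this precise by using the resolution $\Pi:\widetilde\Sigma\to P$: on $\widetilde\Sigma$ the pulled-back flow extends to the boundary faces, each boundary face maps to some $\overline{S(F)}$, and $\int_{\widetilde\Sigma}\Pi^*(\phi^*\omega\wedge p_2^*\eta)$ computes, in the $t\to\infty$ limit, the boundary integral over these faces, which is precisely $\sum \Res_F^u(\omega)[\phi_0^{-1}(S(F))](\eta)$.

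In the stationary case (Case II), I would use the stratification $\Theta=\Theta_{\le\delta}\cup\Theta_{\ge\delta}$ from the proof of Theorem \ref{gan}: the piece $\Theta_{\le\delta}$ near the critical manifold $F$ contributes, in the limit, the term $\Res_F^u(\omega)[\phi_0^{-1}(S(F))]$ coming from $F$ itself (via the explicit local model $(s,a,b)\mapsto(a,s\alpha(sa,b),\beta(sa,b))$, where $s\to 0$ corresponds to $t\to\infty$ and the image limits onto $\{y=0\}\cap\phi_0(U)$, i.e. $\phi_0^{-1}(S(F))$ locally), while $\Theta_{\ge\delta}$ is handled by induction on the number of critical levels above $\delta$, applying the already-resolved map $\theta_\delta:\widetilde W\to f^{-1}(\delta)$ and noting it is again $s$-normal. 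The fiber integral $\int_{U(F)/F}\omega$ arises in the $\Theta_{\le\delta}$ model because the $y$-variables (the $\nu^-(F)$, i.e. unstable, directions) get integrated out as $s\to 0$. I would assemble these local identities into a global one using a partition of unity on $B$ subordinate to the charts, noting that the currents $[\phi_0^{-1}(S(F))]$ are well-defined by transversality and that the $\Res_F^u(\omega)$ are genuine smooth forms on $\phi_0^{-1}(S(F))$ since $U(F)/F$ is a compact fiber (finite volume by Latschev). The main obstacle I anticipate is the bookkeeping of broken trajectories: showing that the boundary faces of the resolution $\widetilde\Sigma$ that map to lower strata $\overline{S(F')}$ with $\codim S(F')>k$ contribute nothing, and that the faces over strata with $\codim S(F')\le k$ contribute exactly the stated residue with the correct sign and multiplicity one — this requires the dense-diffeomorphism property of $\Pi$ and a dimension count ruling out overcounting, and is where the transversality hypothesis (Lemma \ref{transvlemma}) does all the work.
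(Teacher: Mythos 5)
Your first step is fine: $s$-normality plus Theorem \ref{gan} gives strong geometric atomicity, and then Proposition \ref{p1} and Theorem \ref{t1} (applied with $\phi_t^*\omega-\phi_0^*\omega=\partial\eT_t(\omega)$) do show that $\displaystyle\lim_{t\ra\infty}\phi_t^*\omega$ exists as a locally flat current. The gap is in the identification of that limit, which is the actual content of the theorem: the claims that the mass "concentrates transverse to $S(F)$ with transverse integral $\int_{U(F)/F}\omega$", that the codimension-one faces at infinity of the Bott--Samelson resolution contribute with multiplicity one and the right orientation, and that corner faces (broken trajectories) and deeper strata contribute nothing, are precisely the points you defer to "bookkeeping". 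As written this is a plan, not a proof. Moreover two of the geometric assertions on which the plan rests are off: the boundary faces at infinity of the forward flowout do not map to $\overline{S(F)}$ but onto sets of the form $U(F)\times_F\phi_0(\phi_0^{-1}(S(F)))$ (unstable fibers over the image of $\phi_0^{-1}(S(F))$), and it is only after projecting to $B$ that one lands on $\phi_0^{-1}(S(F))$; and the right-hand side being a well-defined current is not a consequence of transversality alone --- one needs the unstable fibers $U(F)/F$ to have finite (not compact!) volume and, more seriously, $\phi_0^{-1}(S(F))$ to have \emph{locally finite volume in $B$} near its closure, which is the nontrivial content of Propositions \ref{essprop1} and \ref{essprop2} and again requires the blow-up machinery; you cannot simply assert it.

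The paper avoids exactly the face-by-face bookkeeping you anticipate by a different mechanism: it passes to the graph current $\xi_t=(\phi_t,\phi_0)$ in $P\times_BP$, uses strong atomicity to get that $\xi_\infty:=\lim\Imag\xi_t$ is a closed, locally integrally flat current of dimension $n$, observes that away from the (measure zero, by the Smale condition and Lemma 3.10 of \cite{La}) set of twice-broken trajectories $\xi_\infty$ coincides with the explicit rectifiable current $T=\sum_F U(F)\times_F\phi_0(\phi_0^{-1}(S(F)))$, and then invokes Federer's support theorem 4.1.20 of \cite{Fe} to conclude $\xi_\infty=T$; the residue formula then falls out of fiber integration under the pushforward $(\pi\circ\pi_2)_*(\pi_1^*\omega\wedge\cdot)$. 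If you want to keep your direct resolution-based route you must either carry out the full analysis of all boundary faces (orientations, multiplicities, vanishing of corner contributions), essentially reproving Latschev's Theorem 4.1 in the relative setting, or import the same "support of the discrepancy has zero $\mathcal{H}^n$-measure plus flat support theorem" mechanism --- note that even your dimension-count dismissal of strata is implicitly an appeal to that theorem. As it stands, the decisive step is missing.
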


We will need first a few preparatives that insure that the expression on the right hand side makes sense as a current. The issues at stake are taken care of by the following two results:
\begin{prop} \label{essprop1} Let $M$ be a Riemannian manifold and $f$ a Morse-Bott-Smale function on it which is bounded above. Then the fibers of the projection $U(F)\ra F$ have finite volume, for every critical manifold $F$.
\end{prop}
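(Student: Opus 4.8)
\medskip

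The plan is to reduce the statement to the key technical result of Latschev's article (and its exposition in the Appendix), namely that stable/unstable manifolds of a Morse-Bott-Smale flow admit a compact resolution by a manifold with corners, once one has a ``floor'' or ``ceiling'' to flow into. Here the role of the floor is played by the hypothesis that $f$ is bounded above: since every unstable trajectory emanating from $F$ flows \emph{forward} to higher energy, and $f\le C$ for some $C$, such a trajectory is eventually trapped in the compact-energy region $f^{-1}([f(F), C])$ — more precisely, the closure $\overline{U(F)}$ is contained in $f^{-1}([f(F),C])$ and consists of (possibly broken) trajectories whose breaks occur at finitely many critical values of $f$ lying in $[f(F),C]$. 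Thus the combinatorics of broken trajectories through $U(F)$ is finite, exactly as in the compact case, and the blow-up/resolution procedure terminates.

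\medskip

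First I would fix a critical manifold $F$ at energy $c=f(F)$ and a small $\delta>0$ with no critical value in $(c,c+\delta]$, and consider the ``first exit slice'' $U(F)\cap f^{-1}(c+\delta)$, which by the Morse-Bott lemma (Lemma \ref{MBL}) is, near $F$, the total space of the unit sphere bundle of $\nu^-(F)$ — in particular compact and of finite volume, being a smooth compact manifold. Then I would flow this slice forward via $\phi_t$: the portion of $U(F)$ with energy in $[c+\delta, C]$ is the flowout of a compact set over a bounded time interval \emph{away from other critical manifolds}, so it has finite $(n')$-Hausdorff measure (where $n'=\dim U(F)$) on each piece where the flow stays clear of critical sets; the only places this can fail are near intermediate critical manifolds $F'$ with $c<f(F')\le C$, of which there are finitely many by properness of $f$ on the relevant energy band together with boundedness above. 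At each such $F'$ one invokes the Smale transversality condition $U(F)\pitchfork S(F')$ and applies Latschev's blow-up lemma (the analogue of Lemma \ref{transvlemma} and the constructions of Appendix \ref{appA}) to resolve the closure of the flowout near $F'$ by a compact manifold with corners mapping into $M$; finite volume follows since a smooth map from a compact manifold with corners has image of finite Hausdorff measure. Inducting on the finitely many critical values between $c$ and $C$ gives that $\overline{U(F)}$ is covered by the image of a compact manifold with corners, hence has finite volume; restricting to a single fiber of $U(F)\ra F$ gives the claim.

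\medskip

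The main obstacle, as in Latschev's original argument, is making the inductive resolution near intermediate critical manifolds precise: one must check that the flowout of an already-resolved compact piece, when it hits the next critical manifold $F'$ transversally, again admits a corner resolution, and that the number of ``corner strata'' introduced is controlled by the finite set of critical values — this is precisely the content of the blow-up technique recalled in Appendix \ref{appA}, and here it applies verbatim because the energy band $[c,C]$ contains only finitely many critical levels. A secondary point requiring a line of justification is that $f$, though only assumed bounded above (not proper globally), is automatically proper on $f^{-1}([c,C])\cap \overline{U(F)}$ in the sense needed: the closure of $U(F)$ is built from trajectories that begin at $F$, and one argues as in the Morse-Bott-Smale setting that this closure is compact on each energy band — alternatively, one restricts attention from the start to $\overline{U(F)}$, which is a closed subset on which the flow is complete and whose intersection with each $f^{-1}([c,c'])$ is compact by a standard trajectory-limit argument. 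Once these points are in place, the finite-volume conclusion is immediate.
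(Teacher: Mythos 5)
Your overall strategy is exactly the paper's: the printed proof of Proposition \ref{essprop1} consists of a single sentence declaring it analogous to the proof of Latschev's Theorem \ref{Lath} via the blow-up techniques of Appendix \ref{appA}, i.e.\ precisely the induction over critical levels and the Bott--Samelson resolution with corners that you describe, with boundedness of $f$ from above playing the role of the ceiling that makes the induction terminate.

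The place where you go beyond the paper is, however, also where the argument has a real gap: you appeal to ``properness of $f$ on the relevant energy band'' and to a ``standard trajectory-limit argument'' giving compactness of $\overline{U(F)}\cap f^{-1}([c,c'])$, but neither follows from the stated hypotheses (complete gradient flow, $f$ Morse--Bott--Smale and bounded above). On $M=\bR^2$ with the flat metric and $f(x,y)=-e^{-(x^2+y^2)}$, the gradient flow is complete, $f$ is Morse (hence trivially Morse--Bott--Smale, there being a single critical point) and bounded above, yet the unique fiber of $U(F)\ra F$ is all of $\bR^2$ and has infinite volume: forward trajectories escape to infinity inside the top energy band instead of accumulating on a highest critical manifold, so there is no compact ceiling to flow into and the resolution of the closure never terminates. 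In other words, finiteness of the critical values in $[c,C]$, compactness of $\overline{U(q)}$ in each band, and convergence of all forward trajectories to critical manifolds are additional hypotheses rather than consequences of boundedness above; they do hold in the situations where the paper actually invokes the proposition (compact fibers in Section \ref{Sect5}, or $f$ proper with finitely many critical manifolds in Section \ref{Secnc}), and once one of these is made explicit your reduction to the Appendix \ref{appA} resolutions is exactly the intended proof.
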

\begin{proof} The proof is analogous to the proof of Latshcev's Theorem \ref{Lath}, with the techniques detailed in the Appendix \ref{appA}.
\end{proof}

\begin{prop} \label{essprop2} Let $\phi_0:B\ra P$ be an s-normal section with respect to the locally constant Morse-Bott-Smale vertical vector field $X$. Then for every critical manifold $F$, $\phi_0^{-1}(S(F))$ has locally finite volume in $B$ when $B$ is endowed with the pull-back metric $\phi_0^*g$. 
\end{prop}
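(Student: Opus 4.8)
The plan is to reduce to a local statement on $B$ and then pull back, through the section $\phi_0$, the Bott--Samelson resolution of $\overline{S(F)}$ constructed in the Appendix, using $s$-normality to guarantee that the relevant fibered product is a compact manifold with corners of the expected dimension.

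Since having locally finite volume is a local property of $B$, and since $\phi_0\colon(B,\phi_0^*g)\to(P,g)$ is an isometric immersion, it suffices to fix $b_0\in B$ and exhibit a neighborhood on which $\phi_0^{-1}(S(F))$ has finite $(n-d)$-dimensional Hausdorff measure, where $d:=\codim_P S(F)$. Choose a chart $U\ni b_0$ on which the local-constancy of $X$ gives $P|_U\cong U\times M$ with $X=(0,\grad f)$, $\grad f$ being a Morse--Bott--Smale field on $M$; write $S_M(F)\subset M$ for the corresponding stable manifold. If $\phi_0(b_0)\notin\overline{S(F)}$ we may shrink $U$ so that $\phi_0(U)\cap\overline{S(F)}=\varnothing$ and there is nothing to prove; so assume $\phi_0(b_0)\in\overline{S(F)}$. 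Since $f$ is bounded above, Latschev's construction (Appendix \ref{appA}) produces a compact manifold with corners $\widetilde{S_M(F)}$ of dimension $\dim M-d$ together with a smooth map $\Pi\colon\widetilde{S_M(F)}\to M$ with image $\overline{S_M(F)}$ (a diffeomorphism from the interior onto a dense open subset of $S_M(F)$); by horizontal local triviality this globalizes over $U$ to $\Pi_F:=\id_U\times\Pi\colon U\times\widetilde{S_M(F)}\to P|_U$. Pick $r>0$ with $\overline{B(b_0,r)}\subset U$ and, applying Sard's theorem to the distance-to-$b_0$ function on each submanifold $\phi_0^{-1}(S(F'))$, small enough that $\phi_0(\partial B(b_0,r))$ is transverse to every stable manifold; set $\bar U':=\overline{B(b_0,r)}$, a compact manifold with boundary, and form the fibered product
\[ Z:=\{(b,w)\in\bar U'\times\widetilde{S_M(F)}\ :\ \mathrm{pr}_M(\phi_0(b))=\Pi(w)\}, \]
the agreement of base coordinates being automatic because $\phi_0$ is a section. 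Granting that $\phi_0|_{\bar U'}$ is transverse, in the sense appropriate to manifolds with corners, to $\Pi_F$, the set $Z$ is a manifold with corners of dimension $n-d$, and it is compact since it is closed in a product of compact manifolds with corners.

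To conclude, $\mathrm{pr}_B\colon Z\to B$ is smooth, hence locally Lipschitz for $\phi_0^*g$, and its image is compact and contains $\phi_0^{-1}(S(F))\cap B(b_0,r)$: given $b$ with $\phi_0(b)\in S(F)\subset\overline{S(F)}=\Pi(\widetilde{S_M(F)})$, choose any $w$ with $\Pi(w)=\mathrm{pr}_M(\phi_0(b))$, so $(b,w)\in Z$ and $\mathrm{pr}_B(b,w)=b$. Being the image of a compact $(n-d)$-manifold with corners under a locally Lipschitz map, this set has finite $(n-d)$-dimensional Hausdorff measure with respect to $\phi_0^*g$, and a fortiori so does $\phi_0^{-1}(S(F))\cap B(b_0,r)$. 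The one nontrivial input — and the step I expect to be the main obstacle — is the transversality of $\phi_0$ with the resolution map $\Pi_F$ along the corner strata: over a codimension-$j$ face, $d\Pi_F$ of the full tangent space of $\widetilde{S_M(F)}$ spans the tangent space to the lower stable manifold $S(F')$ reached by the corresponding $j$-fold broken trajectory together with the "breaking" directions, and one must show that $s$-normality with respect to \emph{all} the stable manifolds $S(F')$ that occur along broken trajectories down to $F$ forces the required transversality. This is precisely the content of the transversality analysis of Appendix \ref{appA} (Lemma \ref{transvlemma}), which was already the crucial point in the proof of Theorem \ref{gan} and applies here without change.
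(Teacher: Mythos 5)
The decisive step of your argument---that $s$-normality forces $\mathrm{pr}_M\circ\phi_0$ to be transverse to the resolution map $\Pi_F$ along \emph{every} corner stratum of $\widetilde{S_M(F)}$---is not supplied by Lemma \ref{transvlemma} and is false in general. Lemma \ref{transvlemma} propagates complete transversality \emph{to the stable manifolds} for the map obtained by blowing up and flowing the section through a critical level; it says nothing about transversality of a fixed section to the Bott--Samelson resolution of $\overline{S(F)}$. Concretely, look at the boundary stratum of $\widetilde{S_M(F)}$ parametrizing trajectories broken once at a lower critical manifold $F'$, at a point mapping to $q\in U(F')\cap S(F)$, i.e.\ a point on the upper leg, at a level above the break. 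Varying the lower leg does not move $q$, so the image of $d\Pi$ restricted to that stratum is only the tangent space to the family of connecting orbits at that level (in the Morse case, just the flow line $\bR X_q$), while the fiber $\Pi^{-1}(q)$ is positive-dimensional as soon as the lower legs form a family. Since $q\in S(F)$, $s$-normality only constrains $d\phi_0(T_bB)$ relative to $T_qS(F)$ and does not prevent it from containing $X_q$, nor can it make up an arbitrarily large deficiency: e.g.\ with $\dim M=4$, $\codim S(F)=1$, an intermediate critical point of index $2$ and $n=2$, the required sum has dimension at most $2+1<4$, so the stratumwise transversality fails at every such pair, $Z$ need not be an $(n-\codim S(F))$-manifold with corners, and it can even contain pieces of larger dimension (a circle of broken trajectories over a single $b$). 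At that point the ``Lipschitz image of a compact $(n-d)$-manifold'' conclusion is no longer available, so the proof has a genuine gap exactly at the step you flagged.

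A second, independent problem is that you invoke a \emph{compact} resolution of $\overline{S_M(F)}$, i.e.\ Theorem \ref{Lath} for the time-reversed flow; the termination of that blow-up induction needs $f$ bounded \emph{below} (or a compact fiber), whereas Proposition \ref{essprop2} sits in Section \ref{Sect5}'s setting where $f$ is only assumed bounded above, and it is reused in the non-compact version where $f$ is merely proper with complete fibers. There $\overline{S(F)}$ is typically non-compact (for $f=-|v|^2$ on a vector bundle the stable manifold of the zero section is all of $E$), so no compact resolution of it exists. This is precisely why the paper's proof does not resolve $\overline{S(F)}$ at all: it works in the base, blowing up $\phi_0^{-1}(S(F))$ inside a chart $U\subset B$, pushing the lifted section upward through successive critical levels via the Appendix construction (where Lemma \ref{transvlemma} genuinely applies, since there one is flowing the section, not pulling back a resolution), identifying the strict transforms of the deeper $\phi_0^{-1}(S(F'))$ as preimages of $S(F')$ under completely transversal maps defined on compact manifolds with corners, and inducting upward, with termination coming from $f$ being bounded above. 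To repair your approach you would need to prove transversality of $\phi_0$ to the images of $d\Pi$ on all strata (essentially transversality to the connecting-orbit stratification of $\overline{S(F)}$), which is a strictly stronger hypothesis than $s$-normality.
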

Notice that the conclusion of Proposition \ref{essprop2} does not depend on the metric on $B$. However we have to fix a metric for definiteness. 
\begin{proof}  The plan is to show that around each point $b\in B$, each of the sets $\phi_0^{-1}(S(F))$ has finite volume. The main idea  is that after doing the same blow-ups as in  the proof of Theorem \ref{gan}, the strict transform of each of the sets $\phi_0^{-1}(S(F))$ is the preimage of a manifold via a \emph{completely transversal} smooth map  whose domain is a compact manifold with corners. The crucial point here is the Smale property of the flow as appears in the proof of Lemma \ref{transvlemma}.

Suppose that $\phi_0(b)\in S(F)$ for some $F$. Just like in the proof of Theorem \ref{gan}, we make certain simplifying assumptions. We choose $U$, a slice chart for $\phi_0^{-1}(S(F))$ such that $\phi_0(U)$ lands in a chart where we can use the coordinates of the Morse-Bott Lemma. In particular,   $\phi_0(U)$  intersects only $S(F')$ where $\overline{S(F')}\supset S(F)$ which  implies in particular that $\codim{S(F')}\leq \codim{S(F)}$. 

We distinguish again  two cases:

\vspace{0.3cm}
\noindent
\emph{Case I:}  The point $\phi_0(b)\in f^{-1}(-\delta)\cap S(F)$ is not critical.  

We can also assume without restriction of the generality that $\phi_0(U)\subset f^{-1}(-\delta)$ (with $F$ at energy level $0$). This is because the diffeomorphism induced by the flow which  takes $\phi_0(U)$ to $f^{-1}(-\delta)$ will take $\phi_0(\phi_0^{-1}(S(F))\cap U)$ to a submanifold of $ f^{-1}(-\delta)$ with comparable volume since this diffeomorphism has bounded differential. 

Now $\phi_0^{-1}(S(F))\cap U$ is a neighborhood of $b$ in the manifold $\phi_0^{-1}(S(F))$  hence it has finite volume. The non-trivial question is about  $\phi_0^{-1}(S(F'))$ with $S(F)\subset \overline{S(F')}$. Let $\Bl:\tilde{U}\ra U$ be the blow-up of $\phi_0^{-1}(S(F))$ inside $U$.  We use again the neighborhood $V$ of $F$ (see (\ref{eqap1})) and $V_{-\delta}=V\cap f^{-1}(-\delta)$. 

Due to the transversality of $\phi_0$ with $S(F)$, one has a map  (see \ref{apcommdiag})
\[ \tilde{\phi}_0:\tilde{U}\ra \tilde{V}_{-\delta}\quad \mbox{ such that }\quad \Psi_{-\delta}\circ \tilde{\phi}_0=\phi_0\circ \Bl\]
where $\Psi_{-\delta}:\tilde{V}_{-\delta}\ra V_{-\delta}$ is the blow-up along $B_{-\delta}$ described in (\ref{apcommdiag1}).   One also has a map $\tilde{\phi}_{0,\delta}:\tilde{U}\ra \tilde{V}_{\delta}$, which is basically $\tilde{\phi}_0$, except for the first coordinate (see (\ref{apdiag2})). The map relevant for the discussion is
\[ B_{\phi_0}:=\Psi_{\delta}\circ\tilde{\phi}_{0,\delta}:\tilde{U}\ra V_{\delta}.
\]
In Lemma \ref{transvlemma}, it is proved that if $\phi_0$ is transversal to $S(F')\cap f^{-1}(-\delta)$ then $B_{\phi_0}$ is transversal to $S(F')\cap f^{-1}(\delta)$ as well. 
The main observation is that the strict transform of $\phi_0^{-1}(S(F'))$ is $B_{\phi_0}^{-1}(S(F'))$ for each $F'$ for which the following relation holds:
 \begin{equation}\label{eq2s4}\overline{\phi_0^{-1}(S(F'))}=\phi_0^{-1}(S(F'))\cup \phi_0^{-1}(S(F)). \end{equation} The strict transform is by definition the closure of $\Bl^{-1}(\phi_0^{-1}(S(F')))$ in $\tilde{U}$. To see  that
 \begin{equation}\label{eq2s5} \overline{\Bl^{-1}(\phi_0^{-1}(S(F')))}=B_{\phi_0}^{-1}(S(F'))
 \end{equation}  holds, take a sequence of points   $x_n\in \phi_0(\phi_0^{-1}(S(F')))\cap f^{-1}(-\delta)$ such that $x_n\ra x\in S(F)\cap f^{-1}(-\delta)$. The (simple) trajectories that each $x_n$ determines \footnote{trajectories that end at $F'$} get close to a   trajectory which is broken only once at a point in $F$ and which ends up at $F'$ as well. Checking the exact definition of the map $B_{\phi_0}$ it is not hard to see that this type of broken trajectories are in one-to-one correspondence with points in $B_{\phi_0}^{-1}(S(F'))\cap \Bl^{-1}(\phi_0^{-1}(S(F)))$. It also is easy to see that away from the exceptional divisor $\Bl^{-1}(\phi_0^{-1}(S(F)))$ the two sides of (\ref{eq2s5}) coincide.

The set equality (\ref{eq2s5}) implies that $\phi_0^{-1}(S(F'))$  has locally finite volume for each $F'$ such that (\ref{eq2s4}) holds since the closure of $\Bl^{-1}(\phi_0^{-1}(S(F')))$ is a compact manifold with corners and the blow-up map $\Bl$ takes the interior of this manifold with corners to the $\phi_0^{-1}(S(F'))$. Now this exceptional divisor becomes the new blow-up locus at the next stage in which all $\phi_0^{-1}(S(F'))$ for $F''$ which satisfy:
\[ \overline{\phi_0^{-1}(S(F''))}=\phi_0^{-1}(S(F''))\cup \overline{\phi_0^{-1}(S(F'))}.
\] 
By induction, one proves that all $\phi_0^{-1}(S(F'))$ have finite volume. The process terminates because $f$ is bounded above.

\vspace{0.3cm}
\noindent
\emph{Case II:} The point $\phi_0(b) \in F$ is  critical.  

The transversality of $\phi_0$ with $S(F)$ ensures at least that $\phi_0^{-1}(S(F))$ has locally finite volume. In order to prove that the other $\phi_0^{-1}(S(F'))$ have finite volume one uses again the map  (\ref{eq2t31}) to put oneself in the same situation as in Case I,  the role of $U$ being played by a manifold with boundary.
\end{proof}
The next result of Federer is essential for the proof of Theorem \ref{refhomf}. We state it using the Hausdorff measure instead of the original, stronger version using the integral-geometric measure. 
\begin{theorem}[Federer \cite{Fe}, 4.1.20] Let $T$ be a flat current of dimension $m>0$, then $\mathcal{H}^m(\supp{T})=0$ implies $T\equiv 0$. 
\end{theorem}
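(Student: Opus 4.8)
\noindent\emph{Proof idea.} The statement is local and diffeomorphism--invariant, so the plan is first to reduce — by passing to a coordinate chart and multiplying $T$ by a smooth cut-off (which preserves flatness of a current and only shrinks its support) — to the case where $T$ is a flat $m$-current with \emph{compact} support in $\bR^n$ satisfying $\mathcal H^m(\supp T)=0$; it then suffices to show $T=0$. The strategy is to recover $T$ from its slices under the coordinate projections and to observe that each such slice must vanish, simply because its support is empty.

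In detail: for each increasing multi-index $I=(i_1<\dots<i_m)$ let $p_I\colon\bR^n\to\bR^m$ be the $1$-Lipschitz map $p_I(x)=(x_{i_1},\dots,x_{i_m})$, so that $p_I^*(dy_1\wedge\dots\wedge dy_m)=dx_{i_1}\wedge\dots\wedge dx_{i_m}=:dx_I$. Since $p_I$ is Lipschitz, $\mathcal H^m\bigl(p_I(\supp T)\bigr)\le\mathcal H^m(\supp T)=0$, so for $\mathcal L^m$--almost every $y\in\bR^m$ the fibre $p_I^{-1}(y)$ is disjoint from $\supp T$. By the slicing calculus for flat chains (Federer, \S\S4.2--4.3) one has, for a.e.\ $y$, a well-defined flat $0$-dimensional current $\langle T,p_I,y\rangle$ whose support lies in $\supp T\cap p_I^{-1}(y)$; by the previous sentence this set is empty for a.e.\ $y$, so $\langle T,p_I,y\rangle=0$ for a.e.\ $y$. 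The coarea (slicing) identity for flat chains then gives, for every $\varphi\in C_c^\infty(\bR^n)$,
\[
T(\varphi\,dx_I)=T\bigl(\varphi\cdot p_I^*(dy_1\wedge\dots\wedge dy_m)\bigr)=\int_{\bR^m}\langle T,p_I,y\rangle(\varphi)\,d\mathcal L^m(y)=0 .
\]
Since every compactly supported smooth $m$-form on $\bR^n$ is a finite sum $\sum_I\varphi_I\,dx_I$, this forces $T(\omega)=0$ for all such $\omega$, i.e.\ $T=0$.

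The routine parts of this argument are the Lipschitz estimate on Hausdorff measure, the fact that a current with empty support vanishes, and the reassembly of $T$ from its $dx_I$-components. The one genuinely substantial ingredient — the step where a full write-up has to do real work — is the slicing theory for \emph{flat} currents: that for a.e.\ $y$ the slice $\langle T,p_I,y\rangle$ exists, is itself flat, is supported in $\supp T\cap p_I^{-1}(y)$, and obeys the coarea identity displayed above. These facts are classical for normal (hence rectifiable) currents, and their extension to flat chains is precisely the content of the relevant part of Federer's Chapter 4; it is also the point at which flatness of $T$ — as opposed to $T$ merely being a current of locally finite mass — is indispensable, since the support estimate for slices can fail for currents that are not flat chains.
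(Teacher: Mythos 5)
The paper offers no proof of this statement to compare against: it is quoted verbatim from Federer (4.1.20), with the remark that the original hypothesis uses the integral-geometric measure rather than $\mathcal{H}^m$. Judged on its own terms, your outline is sound and is close in spirit to Federer's argument, which also proceeds by projecting onto $m$-planes (almost all of them, since his hypothesis is integral-geometric; under the Hausdorff hypothesis your finitely many coordinate projections $p_I$ suffice). The reductions you call routine (cut-off preserving flatness, Lipschitz images of null sets, currents with empty support vanish, reassembly from the components $\varphi\,dx_I$) are indeed routine.

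The one point needing real care is exactly the ingredient you flag: a.e.\ slicing of \emph{general flat} currents with the fiber-support property and the coarea identity. Standard expositions (and much of Federer 4.3) develop slicing for locally normal currents; for an arbitrary flat chain the existence of Lebesgue-a.e.\ slices satisfying your displayed identity is essentially equivalent to the absolute-continuity statement that top-dimensional locally flat chains in $\bR^m$ are of the form $\mathcal{L}^m\wedge\xi$ with $\xi\in L^1_{\loc}$ (part of Federer's characterization of flat chains in 4.1.15--4.1.18); this is precisely where flatness enters, and where the argument collapses for the non-flat current $\omega\mapsto\omega_0(e_1)$ in $\bR^2$, whose support is a point. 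Moreover, some treatments of flat-chain slicing themselves invoke support theorems of the kind you are proving, so you must check your references to avoid circularity. You can sidestep slicing entirely and stay closer to Federer: $(p_I)_*(\varphi T)$ is a compactly supported flat $m$-chain in $\bR^m$, hence equals $\mathcal{L}^m\wedge\xi$ with $\xi\in L^1$; it is supported in the $\mathcal{L}^m$-null set $p_I(\supp(\varphi T))$, hence vanishes; choosing $\psi\in C_c^\infty(\bR^m)$ equal to $1$ near $p_I(\supp(\varphi T))$ then gives $T(\varphi\,dx_I)=T\bigl(\varphi(\psi\circ p_I)\,dx_I\bigr)=(p_I)_*(\varphi T)(\psi\,dy)=0$, and the proof finishes as you wrote, using only pushforward of flat chains under Lipschitz maps plus the $L^1$-representation --- strictly less machinery than the full slicing calculus.
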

\begin{proof} We follow the ideas in \cite{HL2} and \cite{HL3}. The main point is to turn the identity above into an equality of kernels (currents in $P\times P$)   which are represented by locally integrally  flat currents. The secret lies in the formula:
\begin{equation}\label{eq4} \phi_t^*(\cdot)=(\pi\circ \pi_2)_*(\pi_1^*(\cdot)\wedge \Imag \xi_t),
\end{equation}
where $\xi_t:=(\phi_t,\phi_0)$ is the flow-graph of $\phi_0$ at time $t$  and $\pi_{1,2}$ are the projections onto the first and the second factor in $P\times P$. Relation (\ref{eq4}) is just a rewriting in terms of currents of the change of variables formula:

\begin{equation}\label{eq5s5} \int_{B}\phi_t^*\omega\wedge \eta=\int_{\xi_t(B)}\pi_1^*\omega\wedge (\pi\circ\pi_2)^*\eta,\quad\quad \forall \omega\in\Omega^k(P), \forall\eta\in\Omega^{n-k}_c(B).
\end{equation}

The current $\Imag \xi_t$ in $P\times P$ satisfies the following equality:
\begin{equation}\label{xicur} \Imag\xi_t-\Imag\xi_0=d[\Imag \xi_{[0,t]}].
\end{equation}
Notice that because $\xi_0$ is a proper map we have that $\Imag\xi_0$ is a closed current in $P\times P$ (as long as the manifold $B$ does not have any boundary).  Hence $\xi_t$ is a closed current. Moreover, it follows from  Theorem \ref{gan} that the  current $\Imag \xi_{[0,t]}$ converges when $t\ra \infty$ locally in  the mass norm to $\Imag\xi_{[0,\infty)}$ hence $d[\Imag \xi_{[0,t]}]$ is a locally integrally flat current. We conclude that $\Imag \xi_{\infty}:=\displaystyle\lim_{t\ra \infty}\Imag\xi_t$ exists and is a closed, locally integrally flat current of dimension $n$ in $P\times P$.  

It is not too hard to see (for example by Theorem \ref{notn})  that  $\supp\Imag\xi_{\infty}\subset \overline {\Imag \xi}$ and in fact one has:
\[\supp\Imag\xi_{\infty}\subset\{(p,q)~|~q\in\Imag \phi_0\;\mbox{and} \;p \succ^{(1)} q \}=:C\] where the order relation $p\succ q$ means that there exists a (possibly broken) trajectory going from $q$ to $p$.  For an integer $l$ we use the notation $p\succ^{(l)} q$ to say that there exists a broken trajectory between $q$ and $p$ with at least $l$ critical points in between, counting $p$ and/or $q$ too provided they are critical. 

Due to the transversality of $\phi_0$ with all the stable bundles one has that 
\[C\supset \{(p,q)~|~q=\phi_0(b)\;\mbox{and}\; p\in U_{\phi_{\infty}(b)}\}=\bigcup_{F}U(F)\times_F \phi_0(\phi_0^{-1}(S(F))).\]
Consider $D:=P\times P\setminus \{(p,p)~|~p\in F\cap \phi_0(B)\}\cup\{(p,q)~|~q\in\phi_0(B),\; p\succ^{(2)}q\}$, an open subset of $P\times P$. It follows from the transversality of $\phi_0$ and the Morse-Smale condition on the flow (see Lemma 3.10 in \cite{La}) that the $n$-Hausdorff measure of $C\setminus D$ is zero. Just as in Lemma 2.5 in \cite{HL3} the closure of $\Imag \xi_{[0,\infty)}\cap D$ in $D$ is a manifold with boundary where the  boundary at $\infty$ is the disjoint union of manifolds: $\bigcup_{F}U(F)\times_F \phi_0(\phi_0^{-1}(S(F)))$. 

Let $T:=\sum_FU(F)\times_F \phi_0(\phi_0^{-1}(S(F)))$ be a locally integrally flat current on $P\times P$ of dimension $n$. It is a current to begin with because of Propositions \ref{essprop1} and \ref{essprop2}. It follows from what we said above that $\Imag \xi_{\infty}\bigr|_{D}=T\bigr|_{D}$ and this translates into $\supp( \Imag \xi_{\infty}-T)\subset C\setminus D$.  Now we can apply  Theorem 4.1.20 in \cite{Fe} to conclude that in fact 
\[ \Imag \xi_{\infty}=T.
\]

The final formula is a matter of making explicit the current $(\pi\circ \pi_2)_*(\pi_1^*(\omega)\wedge T)$ by using the diagram:
 \begin{equation}\xymatrix{ & U(F)\times_{F} \phi_0(\phi_0^{-1}(S(F)))\ar[r]\ar[d]  &U(F)\ar[d] \\
\phi_0^{-1}(S(F)) \ar[r]^{\phi_0} \ar @/_1.5 pc/[rr]_{\tau_F}  & \phi_0(\phi_0^{-1}(S(F)))\ar[r]^{\qquad\pi^s_F}   &  F}
\end{equation} 
A standard result (a combination of Fubini with  Prop VIII,  page 301 \cite{GHV}) implies that
\[\int_{U(F)\times_F \phi_0(\phi_0^{-1}(S(F)))}\pi_1^*\omega\wedge\pi_2^*(\pi^*\eta)=\int_{\phi_0(\phi_0^{-1}(S(F)))}(\pi_F^s)^*\left(\int_{U(F)/F} \omega\right)\wedge \pi^*\eta \bigr |_{\phi_0(\phi_0^{-1}(S(F)))}.
\]
Finally, $\phi_0:\phi_0^{-1}(S(F))\ra \phi_0(\phi_0^{-1}(S(F)))$ is a diffeomorphism and $\pi\circ \phi_0=\id$.
\end{proof}
\begin{remark} One issue we have not treated above is the orientation. Since the manifold $B$ is oriented one gets that $\xi_0(B)$ is oriented by requiring that $\xi_0$ be orientation preserving. One also has that $U(F)\times_F \phi_0(\phi_0^{-1}(S(F)))$ is naturally oriented as a connected component of the (codimension $1$) boundary at $\infty$ of $\overline{\xi([0,\infty)\times B)}$. One can choose on orientation for a fiber of $U(F)\ra F$ (the fiber is orientable since it is homeomorphic with some $\bR^i$).  One endows $\phi_0(\phi_0^{-1}(S(F)))$ with the complementary orientation in $U(F)\times_F \phi_0(\phi_0^{-1}(S(F)))$ using a certain universal convention say "unstable fiber first".  Now the resulting current $\int_{U(F)/F}\omega\times[\phi_0(\phi_0^{-1}(S(F)))]$ only depends on the orientation of $U(F)\times_F \phi_0(\phi_0^{-1}(S(F)))$ which is induced by the orientation on $B$. It does not depend on the choice of orientation on the fiber of $U(F)\ra F$ since any change here will be offset by a mandatory change on $\phi_0(\phi_0^{-1}(S(F)))$. Finally the orientation on $\phi_0^{-1}(S(F))$ is the one induced by $\phi_0$.
\end{remark}
\begin{remark} We pretty much followed the proof of Theorem 2.3 in \cite{HL2} which is based on Federer Theorem 4.1.20. An often cited result in the works of Harvey, Lawson and Latschev is Federer's Flat Support Theorem, a generalization for flat currents of the well-known Constancy Theorem. The Flat Support Theorem (FST) says that if the support of a $k$-dimensional, \emph{closed}, flat current $T$ is contained in a submanifold $N$ of the same dimension $k$,  then  $T=c[N]$ for some real constant $c$. This theorem is a consequence via localization  of the Constancy Theorem and Federer's characterization of flat currents in section 4.1.15. of his book \cite{Fe}.   It is crucial that one does not need to know a priori that $[N]$ is also a closed current in order to apply this result. An alternative proof to Theorem \ref{refhomf} can be given using FST,  by noticing that the support of the current of interest ($\displaystyle\lim_{t\ra \infty} \Imag\xi_t$) is contained in  the topological closure of $N:=\sqcup_F U(F)\times_F \phi_0(\phi_0^{-1}(S(F)))$, which is itself a rectifiable current and for which FST applies. We preferred the "cleaner" 4.1.20 instead even if that means a bit more work. 

Notice however that whatever method one chooses to employ one needs to know a priori the validity of Propositions \ref{essprop1} and \ref{essprop2} in order to make sense of the current at infinity.
\end{remark}

 In the rest of the article we will do several examples where Theorem \ref{refhomf} applies. Let us see an easy example where non-compactness is present.
\begin{example} If $E\ra B$ is a Riemannian vector bundle with a section $s$ and $f(x,v)=-|v|^2$ and let $\Phi$ denote the flow it induces.  Then for every form $\omega\in \Omega^*(E)$ we get that
\[ \lim_{t\ra \infty} \Phi_t^*\omega=\iota^*\omega, 
\]
where $\iota:B\ra E$ is the zero section inclusion. The only stable manifold in this case is $E$ itself and obviously every section is transversal.
\end{example}

Putting together Theorem \ref{refhomf} with Corollary \ref{cltran} we get:
\begin{corollary}\label{impcor} Let $\omega\in\Omega^k(P)$ be a {closed} form, $X\in\Gamma(VP)$ a vertical, locally Morse-Bott-Smale vector field and $\phi_0:B\ra P$ a smooth section of a fiber bundle with {compact fiber} which is normal with respect to $X$. Then the following generalized transgression formula holds:
\[ \sum_{\codim S(F)\leq k}\Res^u_F(\omega)[\phi_0^{-1}(S(F))]-\sum_{\codim U(F)\leq k}\Res^s_F(\omega)[\phi_0^{-1}(U(F))]=d\mathscr{T},
\]
where $\mathscr{T}$ is a current with $L^1_{\loc}$ coefficients.
\end{corollary}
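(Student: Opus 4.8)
The plan is to obtain this by combining Theorem \ref{refhomf} with Corollary \ref{cltran}, applied both to $X$ and to $-X$. First note that $-X$ is again a vertical, locally Morse-Bott-Smale vector field: it is the vertical gradient of $-f$, whose critical manifolds coincide with those of $f$ while its stable and unstable bundles are interchanged, $S^{-X}(F)=U^X(F)$ and $U^{-X}(F)=S^X(F)$; the Smale transversality condition is symmetric under this interchange, and the horizontal local triviality is obviously preserved. Hence a section that is $u$-normal for $X$ is $s$-normal for $-X$, so a normal $\phi_0$ satisfies the hypotheses of Theorem \ref{refhomf} with respect to $X$ and, separately, with respect to $-X$. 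Since the fiber is compact, the generating functions are automatically bounded above (resp. below), $\omega$ and $d\omega=0$ are vertically bounded, and Theorem \ref{gan} supplies strong (hence weak) geometric atomicity on the appropriate semi-axes; thus both Corollary \ref{cltran} and Theorem \ref{refhomf} apply in each case.

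Combining Theorem \ref{refhomf} with Corollary \ref{cltran} for the flow of $X$ gives
\[
\sum_{\codim S(F)\leq k}\Res_F^u(\omega)\,[\phi_0^{-1}(S(F))]-\phi_0^*\omega=(-1)^{|\omega|}d[\eT_{\infty}(\omega)].
\]
Running the identical argument for the flow $\phi_{-t}$ of $-X$, and using $\lim_{t\ra\infty}\phi_{-t}^*\omega=\lim_{t\ra-\infty}\phi_t^*\omega$ together with the identifications $S^{-X}(F)=U^X(F)$ and $U^{-X}(F)=S^X(F)$ --- so that $\Res_F^{u,\,-X}(\omega)=\tau_F^*\int_{S^X(F)/F}\omega=\Res_F^s(\omega)$ and $\codim S^{-X}(F)=\codim U^X(F)$ --- yields
\[
\sum_{\codim U(F)\leq k}\Res_F^s(\omega)\,[\phi_0^{-1}(U(F))]-\phi_0^*\omega=(-1)^{|\omega|}d[\eT_{\infty}^{-X}(\omega)],
\]
where $\eT_{\infty}^{-X}(\omega)$ denotes the transgression current built exactly as $\eT_{\infty}(\omega)$ but from the flow of $-X$.

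Subtracting the two displays cancels $\phi_0^*\omega$ and produces the asserted identity with
\[
\mathscr{T}:=(-1)^{|\omega|}\bigl(\eT_{\infty}(\omega)-\eT_{\infty}^{-X}(\omega)\bigr).
\]
By the Remark following Proposition \ref{p1}, each of $\eT_{\infty}(\omega)$ and $\eT_{\infty}^{-X}(\omega)$ is represented by a form on $B$ with $L^{1}_{\loc}$-coefficients, hence so is $\mathscr{T}$. I expect no genuine obstacle, the statement being a formal consequence of the two quoted results; the only point requiring care is the bookkeeping of the $S\leftrightarrow U$ interchange on passing to $-X$, in particular verifying that the residue forms and the codimension constraints transform as stated.
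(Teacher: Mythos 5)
Your proposal is correct and is essentially the paper's own argument: the corollary is stated there as an immediate consequence of Theorem \ref{refhomf} combined with Corollary \ref{cltran}, applied to the forward flow and (implicitly, via the $S\leftrightarrow U$ interchange for $-X$, which is where ``normal'' rather than merely s-normal is used) to the backward flow, with $\mathscr{T}$ the difference of the two $L^1_{\loc}$ transgression currents. Your write-up merely makes explicit the bookkeeping that the paper leaves to the reader.
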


\begin{example} \label{exlahl}  We do now a  particular important case of Theorem \ref{refhomf} which is also a \emph{universal} case, in which the role of the  section (here a tautological one) goes almost unnoticed.   Let $\pi :P\ra B$ be a fiber bundle and $X$ a vertical vector field which is locally Morse-Bott-Smale, the gradient of an upper bounded function in each fiber. Let $\omega\in\Omega^*(P)$ be a form on $P$. Let $\pi_2:P\times_{B}P\ra P$ be the first projection of the fiber product built out of $\pi$. It comes with a tautological section:
\[ s:P\ra P\times_BP, \quad\quad s(p)=(p,p).
\]
Now, $V\pi_2:=\Ker d\pi_2$ is naturally isomorphic to $\pi_1^*(VP)$ where $\pi_1$ is the other projection. Hence the vector field $X$ gets lifted to a section of $V\pi_2\ra P\times_B P$. Obviously, this vertical vector field on $P\times_BP$ is locally Morse-Bott-Smale just like its father. The tautological section is transversal to all stable or unstable manifolds of this flow as the latter are of the type $N\times_{B}P$ where $N$ is a stable/unstable manifold of $X$. In fact, the flow $\tilde{\Phi}$ on $P\times_BP$ is related to the flow $\Phi$ on $P$ by the simple relation:
\[ \tilde{\Phi}(t,p,q)=(\Phi(t,p),q),
\]
while  $\phi(t,p):=\tilde{\Phi}(t, s(p))=(\Phi(t,p),p)$. Notice that
\[ \pi_1\circ \phi=\Phi,
\]
which motivates the application of Theorem \ref{refhomf} to the form $\pi_1^*\omega$. 
\end{example}
 For computing the residues one uses the straightforward relation:
\[ \int_{U(\tilde{F})/\tilde{F}}\pi_1^*\omega= \pi_1^*\int_{U(F)/F}\omega,
\]
where $\tilde{F}$ is a critical manifold in $P\times_BP$. One clearly has $s^{-1}(S(\tilde{F}))=S(F)$. The following beautiful formula  recovers Latschev's Theorem 4.1 in \cite{La} and Harvey and Lawson Theorem  3.3 in \cite{HL3}. 
\begin{corollary} \label{corLat}
For every form $\omega\in\Omega^k(P)$ the following relation holds:
\begin{equation}\label{cur20} \lim_{t\ra \infty}\Phi_t^*\omega=\sum_{\codim{S(F)}\leq k}\left(\int_{U(F)/F}\omega\right)[S(F)].
\end{equation}
\end{corollary}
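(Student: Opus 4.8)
The plan is to deduce Corollary \ref{corLat} from the general homotopy formula in Theorem \ref{refhomf} by specializing to the tautological setup of Example \ref{exlahl}. First I would take the fiber bundle $P\to B$ with the locally Morse-Bott-Smale vertical vector field $X$, and form the fiber product $\pi_2:P\times_B P\to P$ viewed as a new fiber bundle over the base $P$. As explained in Example \ref{exlahl}, the field $X$ lifts to a locally Morse-Bott-Smale vertical field on $P\times_B P$ whose critical manifolds $\tilde F$ are exactly $F\times_B P$, with stable/unstable bundles $S(\tilde F)=S(F)\times_B P$ and $U(\tilde F)=U(F)\times_B P$; moreover the tautological section $s(p)=(p,p)$ is automatically transversal to every such stable bundle, so it is s-normal. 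The function controlling the lifted flow is $\tilde f(p,q)=f(p)$, which is bounded above since $f$ is, so the hypotheses of Theorem \ref{refhomf} are met.

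Next I would apply Theorem \ref{refhomf} to the pulled-back form $\pi_1^*\omega\in\Omega^k(P\times_B P)$ (where $\pi_1$ is the other projection), with $s$ in the role of $\phi_0$ and the base being $P$ rather than $B$. The theorem yields
\[
\lim_{t\to\infty}\phi_t^*\pi_1^*\omega=\sum_{\codim S(\tilde F)\le k}\Res_{\tilde F}^u(\pi_1^*\omega)\,[s^{-1}(S(\tilde F))].
\]
Here $\phi_t(p)=\tilde\Phi(t,s(p))=(\Phi(t,p),p)$, so $\pi_1\circ\phi_t=\Phi_t$ and the left-hand side is exactly $\lim_{t\to\infty}\Phi_t^*\omega$. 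On the right, $s^{-1}(S(\tilde F))=s^{-1}(S(F)\times_B P)=S(F)$, and $\codim S(\tilde F)=\codim S(F)$ since the fiber-product direction is common. It remains to identify the residue coefficient.

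For the residue, I would use the naturality already recorded in the excerpt: $\tau_{\tilde F}=\pi_1$ restricted to $S(\tilde F)$ composed appropriately, and integration over the unstable fiber commutes with the pullback, i.e. $\int_{U(\tilde F)/\tilde F}\pi_1^*\omega=\pi_1^*\int_{U(F)/F}\omega$. Hence $\Res_{\tilde F}^u(\pi_1^*\omega)=\tau_{\tilde F}^*\pi_1^*\int_{U(F)/F}\omega$, and since the composite $\tau_{\tilde F}^*\circ\pi_1^*$ pulls the form $\int_{U(F)/F}\omega$ back along $S(F)\to F$, the coefficient on $[S(F)]$ is precisely the form $\int_{U(F)/F}\omega$ evaluated along $S(F)$. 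This gives exactly \eqref{cur20}. I do not expect a serious obstacle here: the content is entirely in Theorem \ref{refhomf} and Example \ref{exlahl}, and the only points needing care are (i) verifying that the tautological section is genuinely transversal to all stable bundles of the lifted flow — which follows because $S(\tilde F)=S(F)\times_B P$ contains the diagonal direction, so $ds$ is automatically complementary — and (ii) matching the orientation conventions so that the coefficient is the honest fiber integral rather than its negative; both are routine given the discussion preceding the statement.
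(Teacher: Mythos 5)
Your proposal is correct and is essentially the paper's own argument: the corollary is obtained exactly by applying Theorem \ref{refhomf} to $\pi_1^*\omega$ in the tautological setting of Example \ref{exlahl}, using $\pi_1\circ\phi_t=\Phi_t$, $s^{-1}(S(\tilde F))=S(F)$, and the naturality relation $\int_{U(\tilde F)/\tilde F}\pi_1^*\omega=\pi_1^*\int_{U(F)/F}\omega$ to identify the residues. One small correction to your parenthetical justification of transversality: $S(\tilde F)=S(F)\times_B P$ does not contain the diagonal direction; rather it contains all vertical directions of the \emph{second} factor, and since any tangent vector $(u,v)$ of $P\times_B P$ splits as $(u,u)+(0,v-u)$ with $(u,u)\in T\Delta$ and $(0,v-u)\in T(S(F)\times_B P)$, the section is automatically transversal, which is the fact the paper also asserts.
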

One can see that (\ref{cur20}) is in fact a "translation"  of an equality of kernels on $P$ (by definition these are currents on $P\times_BP$). In order to justify it, one does a bit of yoga with standard operations with currents,  just as in the Appendix of \cite{HL3}  or \cite{La}.
 \[\boxed{ \lim_{t\ra \infty}\Gamma_{\Phi_t}=\sum_{F} [U(F)\times_B S(F)]}
\]

\section{\bf {A non-compact  version}} \label{Secnc}

In applications it is many times useful to have the possibility to apply Theorem \ref{refhomf} even when the action function that gives the gradient in each fiber is not bounded above as in the previous section.  The obvious example that comes in mind is  the flow on a Riemannian vector bundle generated by the gradient of the norm squared - as opposed to the negative of the same function (see  Getzler's theorem  \cite{Ge} in Section \ref{sult}).  We would still like to have a transgression formula. However, one cannot expect  geometric atomicity in this context.

Nevertheless, one might have already noticed that the results of Section \ref{s2} are not the most general possible. In fact, let $P\ra B$ be a fiber bundle, $g$ a Riemannian metric on $P$, $\omega$ a $k$-form on $P$ with $k\leq \dim{B}$, $X$ a vertical vector field and $\phi_0:B\ra P$ a section.
\begin{definition}\label{sattup} The tuple $(g,\omega, X, \phi_0)$ is said to be weakly atomic on the positive semi-axis if for every $b\in B$ there exists a neighborhood $b\in U$ such that
\[ \int_{\phi([0,\infty)\times U\setminus Z)} |\omega|~d\mathcal{H}^{n+1}<\infty,\quad\quad \int_{\phi([0,\infty)\times U\setminus Z)} |d\omega|~d\mathcal{H}^{n+1}<\infty,
\]
where $\mathcal{H}^{n+1}$ is the $(n+1)$-Hausdorff measure on $P$ induced by $g$ and $Z:=\phi^{-1}(X^{-1}(0))$. 

The tuple $(g,\omega, X, \phi_0)$ is said to be strongly atomic on the positive semi-axis if for every $b\in B$ there exists a neighborhood $b\in U$ such that
\[\int_{\xi([0,\infty)\times U\setminus Z)} |\pi_1^*\omega|~d\mathcal{H}^{n+1}<\infty,\qquad \int_{\xi([0,\infty)\times U\setminus Z)} |\pi_1^*d\omega|~d\mathcal{H}^{n+1}<\infty,
\]
where $\xi:=(\phi_t,\phi_0)$ is the graph of $\phi$ in $P\times_B P$ and $\pi_1:P\times_B P\ra P$ is the projection onto the first coordinate.
\end{definition} 

\begin{remark} Notice that a tuple $(g,\omega, X, \phi_0)$ such that $\phi_0$ is weakly/strongly geometrically atomic with respect to $X$ and such that  $\omega$ and $d\omega$ are vertically bounded is a weak/strong atomic tuple. Hence geometric atomicity is the lighter version of the above definition.
\end{remark}

Recall that on an oriented manifold $M$ a $k$-form $\tilde\omega$   defines a current by setting:
\[ \Omega^{n-k}\ni\eta\ra\int_M \tilde{\omega}\wedge \eta.
\]
If $M$ has a Riemannian metric then one has an alternative definition:
\[ \eta\ra \int_M\tilde{\omega}\wedge \eta (\overrightarrow{M})~d\mathcal{L},
\]
where $\overrightarrow{M}$ is  the unit, positive orientation multi-vector and $\mathcal{L}$ is the measure induced by the volume form. This alternative definition is useful if one wants to extend the domain of definition of the current represented by $\tilde{\omega}$ from forms with compact support to more general forms. For example if $|\tilde{\omega}|$  is an integrable function with respect to $\mathcal{L}$, then due to the inequality:
\[ |\tilde{\omega}\wedge \eta|\leq C_{k,n}|\tilde{\omega}|\cdot |\eta|,
\] 
one can extend the domain of $\tilde{\omega}$ to bounded (in the norm induced by the metric) forms $\eta$. 

We can now use one of the maps $\phi$ or $\xi$ to induce a metric onto $[0,\infty)\times B\setminus Z$.  Definition \ref{sattup} insures that for an atomic tuple the following integral makes sense for any form $\eta$ on $B$ with compact support
\[ \int_{[0,\infty)\times U\setminus Z}\phi^*\omega\wedge p_2^*\eta,
\] 
where $p_2:[0,\infty)\times B\ra B$ is the projection onto the second factor since $p_2^*\eta$ will be bounded.  

\begin{remark}
The conclusions of Proposition \ref{p1} and Theorem \ref{t1} are valid for weak/strong atomic tuples. 
\end{remark}

We now go back to the locally constant Morse-Bott-Smale vertical gradients. The secret lies again in the magic  formulas (\ref{eq4}) - (\ref{xicur}). 

 The currents $\Imag \xi_{[0,t]}$ will still converge \emph{locally} in the mass norm  even if their support will be non-compact.  Once we have this,  we can  use the same techniques to prove the convergence of $\xi_t$  to a locally flat current since Federer's Theorem 4.1.20 applies to locally flat currents as well.  Therefore the proof of  Theorem \ref{refhomf} goes through to give us:
\begin{equation}\label{eq1s6}  \lim_{t\ra \infty} \xi_t= \bigcup_{F}U(F)\times_F \phi_0(\phi_0^{-1}(S(F)))=:\xi_{\infty}
\end{equation}

 Finally, we need an \emph{extension of the domain} of definition of the currents $\xi_t$ with $t\in[0,\infty]$ to accommodate forms of type $\pi_1^*\omega$ as in $(\ref{eq5s5})$ which will surely have non-compact support. This is where the notion of atomic tuple enters. 
  We now describe the data.

Let $P\ra B$ be a fiber bundle with a  Riemannian metric $g$ such that the restriction of $g$ to $VP$ is complete in every fiber.   Suppose we have a  vertical gradient vector field $\nabla f$ induced by a smooth function $f:P\ra B$. Assume that $\nabla f$ is locally horizontally constant as in Section \ref{section 3} and that it is Morse-Bott-Smale in every fiber. 

Moreover, assume that  $f$ is \emph{proper}  with \emph{a finite number of critical manifolds}.  Assume also that  $\nabla^Vf$ is a complete vector field in every fiber.
\begin{remark}\footnote{We owe this observation to Luciano Mari.} On a complete manifold $M$, a sufficient condition for a vector field to be complete is $|X|_p\leq C\dist(p,o),\;\forall p \in M$ where $o$ is a fixed point on $M$. In fact, one cannot hope to do much better than that. If the growth  of $|X|$ at $\infty$ is say of type $p\ra \dist(p,o)^{1+\epsilon}$  then solutions which blow-up in finite time show up. For example,  the function on the real line $f(x)=1/3x^3+x$ is proper and Morse (no critical points) with $t\ra \tan(t)$ a solution of $x'=\nabla_xf$. 
\end{remark}

Let $\phi_0:B\ra P$ be a section which is transversal to all the stable manifolds of $f$ and let $\phi:\bR_{\geq 0}\times B\ra P$ be  $\phi_0$ flown to $\infty$. Assume $\omega \in\Omega^k(P)$ is a smooth form such that $(g,\nabla f, \omega, \phi_0)$ form a \emph{strong atomic tuple}. We will also assume that for every critical manifold $F$ the following holds: there exists a $C>0$ such that
\begin{equation}\label{eq2snv} \int_{U(q)}|\omega|~d\mathcal{H}^p<C, \qquad \forall  q\in \phi_{\infty}(\phi_0^{-1}(S(F)))\subset F
\end{equation}
where $p=\codim {S(F)}$ and the Hausdorff measure $\mathcal{H}^p$ induced by the metric $g$ is restricted to the fiber $U(q)$ of the bundle $U(F)/F$. Obviously, a form with compact support satisfies such a requirement given a Morse-Bott-Smale flow. We need (\ref{eq2snv}) to make sense of the residues in Theorem \ref{refhomf}.

 Recall now that $\xi_0:B\ra P\times_BP$ is defined by $\xi_0:=(\phi_0,\phi_0)$ and we let $\xi$ be the flow of $\xi_0$ via the gradient of the function $\tilde{f}:P\times_B P\ra\bR$ defined by $\tilde{f}(x,y):=f(x)$. Then $\xi_{[0,t]}\subset P\times_BP$ is a rectifiable current that converges locally in the mass norm when $t\ra \infty$. Recall that this means the following: there exists a current $\xi_{[0,\infty]}$ such that $\xi_{[0,t]}\ctr \gamma\ra \xi_{[0,\infty]}\ctr \gamma$  in the mass norm for every smooth function with compact support $\gamma$.  This implies the convergence of $\xi_t$ locally in the flat norm to a current $\xi_{\infty}$. The same arguments as in the proof of Theorem \ref{refhomf} prove (\ref{eq1s6}).

Hence we can write:
\begin{equation}\label{eqxi} (d\xi_{[0,\infty]})(\theta)=\xi_{\infty}(\theta)-\xi_0(\theta), \quad\quad \forall \theta\in\Omega_{\cpt}^{n}(P\times_B P).
\end{equation}

It does not harm in assuming that $B$ is compact in what follows and since the discussion is local in $B$ we can safely suppose that $P$ is diffeomorphic with $B\times F$. By the assumption of vertical completeness for the Riemannian metric on $P$,  there exists an exhaustion with compacts $(K_i)_{i\in \bN}$ of $P$ and cut-off functions $\gamma_i:P\ra \bR$ with $0\leq \gamma_i\leq 1$ such that:
\[ \gamma_i\equiv 1 \quad\mbox{on } \; K_i,\quad\quad |d\gamma_i|\leq 1\; \mbox{on}\; P\qquad\mbox{and} \qquad \supp \gamma_i\subset K_{i+1}.
\]
\begin{lemma} \label{le1s6} Suppose $\theta\in \Omega^{n+1}(P\times_BP)$ is a form such that 
\[ \int_{\xi{[0,\infty)}}|\theta|~d\mathcal{H}^{n+1}<\infty\quad\mbox{and}\quad\int_{\xi[0,\infty)}|d\theta|~d\mathcal{H}^{n+1}<\infty.
\]
Then $\displaystyle \lim_{i\ra \infty} d\xi_{[0,\infty]}(\gamma_i\theta)=d\xi_{[0,\infty]}(\theta):=\xi_{[0,\infty)}(d\theta)$.
\end{lemma}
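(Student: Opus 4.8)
The idea is a standard truncation/dominated-convergence argument, exploiting the fact that $\xi_{[0,\infty]}$ is, locally in the mass norm, the limit of the rectifiable currents $\xi_{[0,t]}$, so that it is represented by integration against the $(n+1)$-dimensional Hausdorff measure restricted to the (locally finite-volume) set $\xi([0,\infty)\times B\setminus Z)$ with a unit orientation multivector $\vec{\xi}$. Thus for any compactly supported test form we may write $\xi_{[0,\infty]}(\eta)=\int_{\xi([0,\infty))}\langle\eta,\vec{\xi}\rangle\, d\mathcal H^{n+1}$, and the two integrability hypotheses on $\theta$ say precisely that this formula continues to make sense for $\eta=\theta$ and for $\eta=d\theta$. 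The point of the lemma is that the boundary relation $d\xi_{[0,\infty]}(\eta)=\xi_{[0,\infty)}(d\eta)$, a priori valid only for $\eta\in\Omega^n_{\cpt}$, passes to the limit along the exhaustion $\gamma_i\theta$.

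First I would record that each $\gamma_i\theta$ has compact support (since $\supp\gamma_i\subset K_{i+1}$), so $d\xi_{[0,\infty]}(\gamma_i\theta)=\xi_{[0,\infty)}(d(\gamma_i\theta))$ is just the already-established Stokes relation (\ref{eqxi}) applied to the compactly supported $n$-form $d(\gamma_i\theta)$ — here one should be a touch careful with the degree bookkeeping, $\theta$ being of degree $n+1$ and $\xi_{[0,\infty]}$ of dimension $n+1$, but this is exactly the setting of (\ref{eqxi}). Next, expand $d(\gamma_i\theta)=d\gamma_i\wedge\theta+\gamma_i\,d\theta$. Therefore
\[
d\xi_{[0,\infty]}(\gamma_i\theta)=\xi_{[0,\infty)}(\gamma_i\, d\theta)+\xi_{[0,\infty)}(d\gamma_i\wedge\theta).
\]
For the first term, $\gamma_i\to 1$ pointwise, $0\le\gamma_i\le1$, and $|d\theta|$ is integrable against $d\mathcal H^{n+1}\llcorner\xi([0,\infty))$ by hypothesis; the dominated convergence theorem gives $\xi_{[0,\infty)}(\gamma_i\,d\theta)\to\xi_{[0,\infty)}(d\theta)$, which is by definition the right-hand side $d\xi_{[0,\infty]}(\theta)$. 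For the second term I would estimate
\[
|\xi_{[0,\infty)}(d\gamma_i\wedge\theta)|\le C_{n}\int_{\xi([0,\infty))}|d\gamma_i|\,|\theta|\,d\mathcal H^{n+1}\le C_{n}\int_{\xi([0,\infty))\setminus K_i}|\theta|\,d\mathcal H^{n+1},
\]
using $|d\gamma_i|\le1$ and $d\gamma_i\equiv0$ on $K_i$ (here $K_i$ is pulled back to $P\times_BP$ via $\pi_1$ or used directly, matching the convention already in force). Since $|\theta|$ is $d\mathcal H^{n+1}$-integrable over $\xi([0,\infty))$ and the sets $\xi([0,\infty))\setminus K_i$ decrease to a null set as $i\to\infty$ (the $K_i$ exhaust $P$, hence $P\times_BP$), the tail integral tends to $0$. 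Combining the two pieces yields $d\xi_{[0,\infty]}(\gamma_i\theta)\to\xi_{[0,\infty)}(d\theta)=d\xi_{[0,\infty]}(\theta)$, as claimed.

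The only genuinely delicate point is the first one: justifying that $\xi_{[0,\infty]}$ really is represented by $d\mathcal H^{n+1}\llcorner\xi([0,\infty)\times B\setminus Z)$ with a bona fide $L^1_{\loc}$ multivector density, so that the phrase "$\xi_{[0,\infty)}(d\theta)$" in the statement is meaningful for $d\theta$ merely integrable rather than compactly supported. This is exactly what the local mass-norm convergence of $\xi_{[0,t]}$ (Theorem \ref{gan}, combined with the representation discussion preceding Definition \ref{sattup}) buys us: each $\xi_{[0,t]}$ is rectifiable of locally finite mass, the limit is taken locally in mass, and the supports sit inside the locally-finite-$\mathcal H^{n+1}$-measure set $\overline{\xi([0,\infty)\times B)}$ — so the limiting current is itself rectifiable with density bounded by $1$, and one extends its domain from $\Omega^{n+1}_{\cpt}$ to $\mathcal H^{n+1}$-integrable forms exactly as in the remark following Definition \ref{sattup}. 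Once this representation is in hand, everything else is the routine dominated-convergence bookkeeping above, and I would present it in that order: (i) identify $\xi_{[0,\infty]}$ as a rectifiable current and extend its domain; (ii) apply (\ref{eqxi}) to $\gamma_i\theta$; (iii) split via the Leibniz rule; (iv) pass to the limit in each summand.
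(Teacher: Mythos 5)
Your proposal is correct and follows essentially the same route as the paper: both expand $d(\gamma_i\theta)$ (the paper equivalently works with $(1-\gamma_i)\theta$) via the Leibniz rule, bound the pairing with $\xi_{[0,\infty]}$ by the $\mathcal{H}^{n+1}$-integral over $\xi([0,\infty))$, and conclude by dominated convergence using $0\leq\gamma_i\leq 1$ and $|d\gamma_i|\leq 1$. The extra care you take in justifying the $\mathcal{H}^{n+1}$-representation of $\xi_{[0,\infty]}$ and the domain extension is exactly the content of the discussion preceding the lemma in the paper, so nothing essentially new or missing.
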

\begin{proof} We have:
\[ d\xi_{[0,\infty)}((1-\gamma_i)\theta)=\xi_{[0,\infty)}(-d\gamma_i \wedge \theta)+\xi_{[0,\infty)}((1-\gamma_i)\wedge d\theta).\]
On the other hand for every integrable $n$-form $\alpha$, one has:
\[ \left|\int_{\xi([0,\infty])}\alpha\right|\leq \int_{\xi[0,\infty]}|\alpha|~d\mathcal{H}^n.
\]
 Lebesgue dominated convergence Theorem gives us the claim due to the uniform bounds on $|d\gamma_i|$ and $\gamma_i$.
\end{proof}
We can use the above result for the form $\theta:=\pi_1^*\omega\wedge \pi_2^*\pi^*\eta$ since $\eta$ coming from $B$ is uniformly bounded. Similarly one has due to \ref{eq2snv}:
\begin{lemma}\label{le2s6} Suppose $\omega$ is a form on $P$ that satisfies (\ref{eq1s6}). Then $\displaystyle\lim_{i\ra \infty} \int_{U(F)/F}\omega\gamma_i$ exists and is a smooth form on $F$. It is denoted by $\int_{U(F)/F}\omega.$
\end{lemma}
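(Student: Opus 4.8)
The statement of Lemma \ref{le2s6} is the fiberwise (over $F$) analogue of Lemma \ref{le1s6}, so the strategy is to localize on $F$ and run essentially the same Lebesgue dominated convergence argument, but now for the fiber integral $\int_{U(F)/F}$ rather than for pairing against a test form. First I would fix a critical manifold $F$ and a point $q_0$ in the relevant set $\phi_\infty(\phi_0^{-1}(S(F)))\subset F$, choose a trivializing chart $W\subset F$ around $q_0$ over which the bundle $U(F)\to F$ restricts to $W\times\bR^p$ with $p=\codim S(F)$, and work with the function $q\mapsto \int_{U(q)}\omega\,\gamma_i$ on $W$. Each integrand $\omega\,\gamma_i$ restricted to $U(q)$ is a smooth $p$-form of compact support (because $\gamma_i$ has compact support in $P$ and $U(q)$ is an injectively immersed copy of $\bR^p$ of finite volume by Proposition \ref{essprop1}), so each $\int_{U(F)/F}\omega\,\gamma_i$ is a genuine smooth form on $W$ by the usual differentiation-under-the-integral-sign; this is the routine part.

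The convergence as $i\to\infty$ is where hypothesis (\ref{eq2snv}) enters. Since $\gamma_i\uparrow 1$ pointwise, $0\le\gamma_i\le 1$, and $\int_{U(q)}|\omega|\,d\mathcal H^p<C$ uniformly for $q$ near $q_0$, dominated convergence applied on each fiber $U(q)$ gives pointwise convergence $\int_{U(q)}\omega\,\gamma_i\to\int_{U(q)}\omega$, and the uniform bound $C$ lets me conclude the limit is locally bounded on $W$. To upgrade "pointwise limit of smooth forms" to "smooth form", I would differentiate: the coefficient functions of $\int_{U(F)/F}\omega\,\gamma_i$ and all their $q$-derivatives are fiber integrals of $\partial_q$-derivatives of $\omega\,\gamma_i$, and each such derivative is again dominated by an integrable function on $U(q)$ independent of $i$ (here one uses that the trivialization is smooth so that the $q$-derivatives of the pulled-back form $\omega$ have fiberwise $L^1$ bounds uniform in $i$, which follows from (\ref{eq2snv}) applied to $\omega$ together with the smoothness of $\omega$ and the local triviality of the bundle $U(F)\to F$). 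Thus the sequence and all its derivatives converge locally uniformly, so the limit is $C^\infty$, and it is exactly the current-theoretic fiber integral $\int_{U(F)/F}\omega$ already used in Theorem \ref{refhomf}.

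**The main obstacle.** The delicate point is not the $i\to\infty$ limit itself but justifying that the $q$-derivatives of $\omega|_{U(q)}$ are controlled uniformly, i.e.\ that (\ref{eq2snv}) — which a priori only bounds the $0$-th order quantity $\int_{U(q)}|\omega|\,d\mathcal H^p$ — propagates to derivatives along $F$. This is where I expect to lean on the structure theory from the Appendix: near $F$ the bundle $U(F)\to F$ is locally trivial with the Morse–Bott normal-form coordinates, so in a fixed trivialization the volume form of $U(q)$ and the pulled-back form $\omega$ depend smoothly on $q$ with estimates uniform on a compact neighborhood of $q_0$; differentiating under the integral is then legitimate and the bound $C$ of (\ref{eq2snv}) together with smoothness of $\omega$ on the (relatively compact in the chart) region furnishes the $i$-independent dominating functions. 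Everything else is a standard invocation of the dominated convergence theorem, exactly parallel to the proof of Lemma \ref{le1s6}.
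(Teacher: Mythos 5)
The paper itself gives no argument for Lemma \ref{le2s6}: it is stated (with ``(\ref{eq1s6})'' evidently a typo for (\ref{eq2snv})) as being ``similar'' to Lemma \ref{le1s6}, i.e.\ as a fiberwise dominated convergence statement. Your existence argument is exactly that intended argument and is fine: on each fiber $U(q)$ with $q$ in the set where (\ref{eq2snv}) holds, $|\omega\gamma_i|\leq|\omega|\in L^1(U(q),\mathcal{H}^p)$ and $\gamma_i\to 1$ pointwise, so $\int_{U(q)}\omega\gamma_i\to\int_{U(q)}\omega$ with a locally uniform bound. (Two small inaccuracies: in the non-compact setting of Section \ref{Secnc} the function $f$ is proper but not bounded above, so Proposition \ref{essprop1} does not apply and $U(q)$ may well have infinite volume -- that is precisely why (\ref{eq2snv}) is imposed; and $\supp\gamma_i\cap U(q)$ need not be compact in the intrinsic topology of the immersed fiber, so ``compactly supported on $U(q)$'' is not automatic, though each $\int_{U(F)/F}\omega\gamma_i$ is still smooth for other reasons.)

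The genuine gap is in your smoothness step, and you have in fact put your finger on it yourself before waving it away. You claim that the $q$-derivatives of the trivialized integrand admit $i$-independent integrable dominating functions ``which follows from (\ref{eq2snv}) applied to $\omega$ together with the smoothness of $\omega$ and the local triviality of the bundle $U(F)\to F$.'' This does not follow: (\ref{eq2snv}) is a zeroth-order bound, and smoothness plus local triviality plus a uniform fiberwise $L^1$ bound on the integrand do not control the fiberwise $L^1$ norms of its parameter derivatives, nor the regularity of the fiber integral. A bare-hands counterexample with exactly your hypotheses: on the trivial bundle $\bR\times\bR\to\bR$ take the smooth integrand $f(q,x)=\cos(qx)/(1+x^2)$, which satisfies $|f(q,\cdot)|\leq (1+x^2)^{-1}\in L^1$ uniformly in $q$, yet $\int_{\bR}f(q,x)\,dx=\pi e^{-|q|}$ is not even $C^1$ at $q=0$, and the formally differentiated integrand $-x\sin(qx)/(1+x^2)$ is not fiberwise integrable. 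Moreover, in the geometric situation the trivializations of $U(F)\to F$ are built from the flow and their differentials typically grow (exponentially) along the non-compact fiber, so even integrability of $|\nabla\omega|$ on the fibers would not by itself dominate $\partial_q$ of the pulled-back form. So to justify the smoothness asserted in the lemma you must either strengthen the hypothesis (e.g.\ impose (\ref{eq2snv})-type bounds on derivatives of $\omega$ along the unstable fibers, or compact vertical support, as in the applications of Section \ref{sult}), or extract it from additional structure of the specific setting; it cannot be extracted from (\ref{eq2snv}), smoothness of $\omega$ and local triviality alone. For the use made of the lemma in Theorem \ref{nncext} one could also retreat to the weaker (and correctly proved) statement that the limit exists as a locally bounded, measurable family of fiber integrals, but that is not what the lemma claims.
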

Putting together (\ref{eqxi}), Lemma \ref{le1s6} and Lemma \ref{le2s6} and Corollary \ref{cltran} we get the non-compact extension we were looking for.
\begin{theorem}\label{nncext} Let $(g,\omega, \nabla^V f, \phi_0)$ be a strongly atomic tuple on the positive semi-axis in the fiber bundle $P\ra B$. Suppose  that:
\begin{itemize}
\item[(a)] with respect to $g$, all fibers of $P\ra B$ are complete;
\item[(b)] $\nabla ^V f$ is a complete, locally constant Morse-Bott-Smale vertical vector field which is proper with a finite number of critical manifolds;
\item[(c)]  $\omega$ satisfies the integrability condition (\ref{eq2snv}); 
\item[(d)] $\phi_0$ is transversal to all the stable manifolds.
\end{itemize}
Then the following homotopy formula holds:
\[  \sum_{\codim{S(F)}\leq\deg{\omega}} \Res_F^u(\omega)[\phi_0^{-1}(S(F))]-\phi_0^*\omega=(-1)^{|\omega|}d[\mathscr{T}_{\infty}(\omega)]+\mathscr{T}_{\infty}(d\omega) ,
\]
where $\mathscr{T}_{\infty}(\omega)$ and $\mathscr{T}_{\infty}(d\omega)$ are currents with $L^1_{\loc}$ coefficients and $\Res_F^u=\tau_F^*\left(\displaystyle\int_{U(F)/F}\omega\right)$.
\end{theorem}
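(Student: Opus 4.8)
The plan is to evaluate the identity of locally flat currents (\ref{eqxi}) on the forms $\theta_\eta:=\pi_1^*\omega\wedge\pi_2^*\pi^*\eta$, $\eta\in\Omega^{n-k}_{\cpt}(B)$, which are in general \emph{not} compactly supported on $P\times_BP$, by first regularizing with the exhaustion cutoffs $\gamma_i$ introduced above and then letting $i\to\infty$. Everything about the currents themselves --- the local mass convergence $\xi_{[0,t]}\to\xi_{[0,\infty]}$, the local flat convergence $\xi_t\to\xi_\infty$, the identification (\ref{eq1s6}) of $\xi_\infty$, and the existence of $(K_i,\gamma_i)$ --- has already been established; what remains is the limiting argument, carried out term by term.

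First I would fix the change-of-variables dictionary between the graph $\xi$ in $P\times_BP$ and the flowout $\phi$ in $P$: since $\pi_1\circ\xi=\phi$ and $\pi\circ\pi_2\circ\xi=p_2$ (because $X$ is vertical; cf.\ (\ref{eq4})--(\ref{eq5s5})), for complementary-degree forms $\alpha$ on $P$ and $\beta$ on $B$ one has $\xi_{[0,\infty)}(\pi_1^*\alpha\wedge\pi_2^*\pi^*\beta)=\mathscr{T}_\infty(\alpha)(\beta)$ --- which is exactly the definition of the $L^1_{\loc}$-currents $\mathscr{T}_\infty(\omega)$, $\mathscr{T}_\infty(d\omega)$ in the statement --- and $\xi_0(\pi_1^*\alpha\wedge\pi_2^*\pi^*\beta)=\int_B\phi_0^*\alpha\wedge\beta$, whenever the right-hand sides make sense. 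Now fix $\eta$ and put $\theta=\theta_\eta$. As $\eta$ is pulled back from $B$ it is uniformly bounded on $P\times_BP$, so the \emph{strong} atomicity of the tuple --- i.e.\ the two bounds of Definition \ref{sattup} for $\pi_1^*\omega$ and $\pi_1^*d\omega$ over $\xi([0,\infty)\times U\setminus Z)$ --- says precisely that $\theta$ and
\[
d\theta=\pi_1^*(d\omega)\wedge\pi_2^*\pi^*\eta+(-1)^{|\omega|}\pi_1^*\omega\wedge\pi_2^*\pi^*(d\eta)
\]
fulfil the hypotheses of Lemma \ref{le1s6}. Applying (\ref{eqxi}) to the compactly supported form $\gamma_i\theta$ gives $d\xi_{[0,\infty]}(\gamma_i\theta)=\xi_\infty(\gamma_i\theta)-\xi_0(\gamma_i\theta)$, and I then pass to the limit $i\to\infty$ in the three terms.

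On the left, Lemma \ref{le1s6} gives $d\xi_{[0,\infty]}(\gamma_i\theta)\to\xi_{[0,\infty)}(d\theta)$; expanding $d\theta$ as above and using the dictionary on each piece, this equals $\mathscr{T}_\infty(d\omega)(\eta)+(-1)^{|\omega|}\mathscr{T}_\infty(\omega)(d\eta)=\mathscr{T}_\infty(d\omega)(\eta)+(-1)^{|\omega|}d[\mathscr{T}_\infty(\omega)](\eta)$, i.e.\ the right-hand side of the asserted formula. Next, $\xi_0(\gamma_i\theta)=\int_B(\gamma_i\circ\phi_0)\,\phi_0^*\omega\wedge\eta\to\int_B\phi_0^*\omega\wedge\eta=\phi_0^*\omega(\eta)$ by dominated convergence, since $\eta$ has compact support and $\phi_0^*\omega$ is smooth. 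The only delicate term is $\xi_\infty(\gamma_i\theta)$: by (\ref{eq1s6}) it equals $\sum_F\int_{U(F)\times_F\phi_0(\phi_0^{-1}(S(F)))}\pi_1^*(\gamma_i\omega)\wedge\pi_2^*\pi^*\eta$, and the combination of Fubini with \cite{GHV}, Prop.~VIII (p.~301) used in the proof of Theorem \ref{refhomf} rewrites the $F$-summand as $\int_{\phi_0^{-1}(S(F))}\tau_F^*\!\bigl(\int_{U(F)/F}\gamma_i\omega\bigr)\wedge\eta$, using that $\phi_0$ maps $\phi_0^{-1}(S(F))$ diffeomorphically onto its image with $\pi\circ\phi_0=\id$. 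Here Lemma \ref{le2s6} --- exactly the place where the integrability hypothesis (\ref{eq2snv}) is used --- shows that $\int_{U(F)/F}\gamma_i\omega$ converges to the smooth form $\int_{U(F)/F}\omega$ on $F$, while the uniform bound in (\ref{eq2snv}) supplies an integrable dominating function over the locally-finite-volume manifold $\phi_0^{-1}(S(F))$ (Proposition \ref{essprop2}); one more dominated convergence then yields $\xi_\infty(\gamma_i\theta)\to\sum_{\codim S(F)\leq k}\Res_F^u(\omega)[\phi_0^{-1}(S(F))](\eta)$, the sum being finite by hypothesis (b) and only these summands surviving (the fibre integral vanishes on dimensional grounds otherwise). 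Collecting the three limits and letting $\eta$ run over $\Omega^{n-k}_{\cpt}(B)$ is the claimed homotopy formula.

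The main obstacle is precisely this last interchange of limits in $\xi_\infty(\gamma_i\theta)$: one must know \emph{a priori} that the fibre integral over $U(F)$ produces a genuine smooth form on $F$ and that this form can then be paired with the rectifiable current $[\phi_0^{-1}(S(F))]$, and neither is available without both the uniform fibrewise bound (\ref{eq2snv}) and the finiteness of $\Vol_{\loc}\bigl(\phi_0^{-1}(S(F))\bigr)$ and of the fibres of $U(F)\to F$ from Propositions \ref{essprop1}--\ref{essprop2} --- which is exactly where the completeness of the fibres, the properness of $f$, and the finiteness of its critical set get used. By comparison, the mere existence of $\xi_{[0,\infty]}$ and $\xi_\infty$ and the fact that $\xi_\infty-\xi_0$ is a boundary are soft, given the material already in place; the genuine content beyond the compact Theorem \ref{refhomf} is the replacement of ``finite volume of the flowout'' by the two weaker integrability conditions, together with the check that Lemmas \ref{le1s6}--\ref{le2s6} are robust enough to survive the cutoff limit.
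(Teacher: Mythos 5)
Your argument is exactly the paper's proof: apply the identity (\ref{eqxi}) to the cutoff forms $\gamma_i\,\pi_1^*\omega\wedge\pi_2^*\pi^*\eta$ and pass to the limit $i\to\infty$ using Lemma \ref{le1s6} for the boundary term, Lemma \ref{le2s6} together with (\ref{eq2snv}) and Propositions \ref{essprop1}--\ref{essprop2} for the current at infinity identified in (\ref{eq1s6}), and the dictionary $\xi_{[0,\infty)}(\pi_1^*\alpha\wedge\pi_2^*\pi^*\beta)=\mathscr{T}_\infty(\alpha)(\beta)$ coming from (\ref{eq4})--(\ref{eq5s5}). The paper compresses this into one sentence; you have merely written out the same three-term limiting argument in detail, and it is correct.
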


\section{\bf {The top Chern class}} \label{tCc}
The first application we  discuss is "the" standard example. It also appears in \cite{HL1} within the general framework of  the theory of singular connections and also in \cite{HL2} as an application of the general theory. We  choose to present all the details here, if not  for anything else, then for didactical reasons, but also because we use slightly different conventions than \cite{HL2}.  This non-trivial case is a beautiful illustration of how one can keep the computations   to an absolute minimum. In fact, one only needs one universal computation for a universal constant. 

Let $E\ra B$ be a complex vector bundle over a compact manifold $B$ of rank $n$ endowed with a connection $\nabla$ and a section $s:B\ra E$ transversal to the $0$ section. We assume that $E$ comes with a hermitian metric but we do not assume compatibility of $\nabla$ with the metric. Let $\pi:P:=\bP(E\oplus \bC)\ra B$ be the projective space fiber bundle over $B$ obtained from $E$. Clearly $s$ can be seen as a section of $\bP(E\oplus \bC)$ by considering the graph of $s$. 

Using the hermitian metric, on each complex projective space $\bP(E\oplus \bC)_b$ one has a Morse-Bott function given by
\[ f_b(L)=\Real\Tr{\epsilon P_L},
\]
where $P_L\in \End(E_b\oplus \bC)$ is the orthogonal projection on $L$ and $\epsilon=\left(\begin{array}{ccc} 1&0 \\
  0& -1\end{array}\right)$ is the  reflection that induces the $\bZ_2$-grading of $E_b\oplus \bC$.
 
    It is quite an easy exercise to show that $[0:1]$ is a unique point of minimum  and the manifold $\bP_{\infty}:=\{[0,v]~|~v\in E\}$ is the critical manifold of maximum points and there are no other critical points for $f$ (see for example Exercise 5.23 in \cite{Ni2}). Moreover $f$ is Morse-Bott-Smale. As a matter of fact, the gradient flow of $f$ is the compactification of the linear flow $v\ra sv$ in $E$ which at $s=0$ has a critical point in $[1:0]$ and at $s=\infty$ has a critical manifold in $\bP_{\infty}$.
  
 The unstable manifold of $[1:0]$ is $\Hom(\bC,E)_b\subset \bP(E\oplus \bC)_b$ while the stable manifold is $S([1:0])=\{[1:0]\}$. Similarly $U(\bP_{\infty})=\bP_{\infty}$ and $S(\bP_{\infty})=\bP(E\oplus \bC)_b\setminus\{ [1:0]\}$.  
  
  One has a global such function $f$ and $X$ will be the vertical gradient vector field it determines. We denote by the same symbols as above the critical/stable/unstable manifolds of $X$ in $P$, hoping this will not cause confusion.
  
  Let $\tau\ra P$ be the tautological line bundle over $P$.  The form $\omega$ on $P$ is the $n$-th Chern form of the connection $\tina$ on $\tau^{\perp}$ obtained by orthogonally projecting $d\oplus \pi^* \nabla$ onto $\tau^{\perp}$. Hence
  \[ \omega=c_n(F(\tina))=\left(\frac{i}{2\pi}\right)^n\det(F(\tina)).
  \]
  
  A crucial observation is that 
  \begin{equation}\label{crobs}\left(\tau^{\perp}\bigr|_{[1:0]},\tina\bigr|_{[1:0]}\right)\simeq (E,\nabla)
  \end{equation} canonically, as bundles with connections. 
  
  We apply Corollary \ref{impcor} since all the transversality conditions are met for $s$. The current at $-\infty$ is of the form 
  \[  \omega_1[s^{-1}(\Hom(\bC,E))]+\omega_2[s^{-1}(P_{\infty})],
  \]
  where $\omega_2$ does not matter since $s^{-1}(P_{\infty})=\emptyset$. On the other hand, $\omega_1$ is the pull-back via $\id_B=\tau_{[1:0]}:s^{-1}(\Hom(\bC,E))\ra B$ of $c_n(F(\tina\bigr|_{[1:0]}))$. By (\ref{crobs}) we get $\omega_1=c_n(F(\nabla))$. 
  
  The current at $\infty$ is of the form
  \[ \eta_1[s^{-1}(0)]+\eta_2[s^{-1}(E\setminus \{0\})]
  \]
  Here $\eta_2$ is the restriction of $c_n(F(\tina))$ to $\bP_{\infty}$, pulled-back via the application $\tau$ to $E\setminus\{0\}$. Now over $\bP_{\infty}$ we have the following splitting of $(\tau^{\perp}\bigr|_{\bP_{\infty}},\tina\bigr|_{\bP_{\infty}})$:
  \[ (\tau^{\perp}\bigr|_{\bP_{\infty}},\tina\bigr|_{\bP_{\infty}})=(\tilde{\tau}^{\perp}\oplus \bC,P_{\tilde{\tau}^{\perp}}(\pi_2^*\nabla)\oplus d)
  \]
  where $\tilde{\tau}^{\perp}\ra P(E)$ is the orthogonal tautological bundle and $\pi_2:\bP(E)\ra B$ is the natural fiber bundle projection. The upshot is that there exists a non-vanishing parallel section for $\tina\bigr|_{\bP_{\infty}}$ and therefore $c_n(F(\tina\bigr|_{\bP_{\infty}}))=0$ and so $\eta_2=0$.
  
  Finally, $\eta_1$ is the restriction to $s^{-1}(0)$ of the fiber integral 
  \[\displaystyle\int_{\Hom(\bC,E)/[1:0]} c_n(F(\tina))\in C^{\infty}(B).\] Since the fiber integral of a closed form is a closed form we get that the result of integration is a constant. So it suffices to do the computations in one fiber. Since $\Hom(\bC,E_b)$ has full measure in $\bP(E_b\oplus \bC)$ we can substitute this later space in the integral in each fiber. Now, $\tina\bigr|_{\bP(E_b\oplus\bC)}$ is the projection to $\tau^{\perp}_b$ of the trivial connection on $E_b\oplus \bC$, the unique connection compatible with the metric and the holomorphic structure, generally called the Chern connection.  In order to conclude this example we have to perform a \emph{universal computation} on a complex projective space. 
  
  We restrict attention to a fiber  but use the same notation without the underscript $b$ . The next result is standard but we include it for completeness sake.
  
The Chern connection on $\tau^{\perp}$ is associated to the left-invariant Maurer-Cartan ($U(n)$-principal bundle) connection on the Stiefel manifold of orthonormal $n$-frames $U(n+1)/U(1)$ over the complex projective space $\bC\bP^n$ via the canonical representation $U(n)\times\bC^n\ra \bC^n$ which induces $\tau^{\perp}$ to begin with. As such, the characteristic forms it determines are invariant under the action of $U(n+1)$ on  the complex projective space.

On the other hand, it is quite straightforward  to calculate  $F(\tina)$  at the point $[0:1]$. We include the proof because we could not find a reference.
\begin{lemma}\label{ftina} The vector spaces $T_{[0:1]} \bP(E\oplus \bC)$ and $\tau^{\perp}\bigr|_{[0:1]}$ can both be canonically  identified with $E$.  Then the curvature of the Chern connection at $[0:1]$ is  the following map from $\Lambda^2E$ into $\mathfrak{u}(E)$, the space of skew-adjoint operators on $E$:
\[ u\wedge w\ra \{v\ra \langle v,w\rangle u-\langle v,u\rangle w\}.
\]which corresponds to the matrix of $2$-forms:
\[ \left(\begin{array}{ccc} \ldots & \ldots &\ldots\\
 \ldots & dz_i\wedge d\bar{z}_j &\ldots \\
 \ldots &\ldots &\ldots \end{array} \right),
\]
once one chooses a basis on $E$.
\end{lemma}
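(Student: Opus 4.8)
The plan is a textbook-style local computation at the distinguished point $[0:1]$, using only that $\tina$ is the orthogonal projection of the trivial connection $d$ on $\underline{E\oplus\bC}$ onto the sub-bundle $\tau^\perp$. I work in a single fibre and write $W=E\oplus\bC$ with its hermitian form $\langle\,\cdot\,,\,\cdot\,\rangle$, taken linear in the first slot. First I would pin down the two identifications in the statement: since $\tau_{[0:1]}=0\oplus\bC$ one has $\tau^\perp_{[0:1]}=E\oplus 0\cong E$, while $T_{[0:1]}\bP(W)\cong\Hom(\tau_{[0:1]},W/\tau_{[0:1]})\cong\Hom(\bC,E)\cong E$, an isomorphism also realised by the differential at $0$ of the affine chart $\psi(z)=[z:1]$, $z\in E$. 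Over $\psi$ the tautological line is spanned by $e(z)=(z,1)$, so the orthogonal projection onto the fibres of $\tau^\perp$ is the $\End(W)$-valued function
\[
P(z)=\mathbf 1-\frac{e(z)\,\langle\,\cdot\,,\,e(z)\rangle}{1+|z|^2},\qquad \tina=P\circ d ,
\]
where $e(z)\,\langle\,\cdot\,,\,e(z)\rangle$ denotes the rank-one operator $v\mapsto\langle v,e(z)\rangle e(z)$.

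Next I would establish the Gauss-type identity for the curvature of a projected connection. From $P^2=P$ one gets $P\,(dP)\,P=0$, and combined with $d^2=0$ a short manipulation gives, for every section $s$ with $Ps=s$, that $\tina^2 s=P\,(dP)\wedge(dP)\,s$, i.e.
\[
F(\tina)=P\,(dP\wedge dP)\,P
\]
as an endomorphism-valued $2$-form on $\tau^\perp$. This reduces the lemma to evaluating the single $\End(W)$-valued $1$-form $dP$ at $z=0$. Differentiating $P$ and using $e(0)=(0,1)$ together with $d\bigl((1+|z|^2)^{-1}\bigr)\big|_0=0$, one finds in block form with respect to $W=E\oplus\bC$ that
\[
dP|_0=\begin{pmatrix}0 & -dz_i\\ -d\bar z_j & 0\end{pmatrix},
\]
that is, the only nonzero entries are $dP|_0^{\,i,n+1}=-dz_i$ and $dP|_0^{\,n+1,j}=-d\bar z_j$ for $1\le i,j\le n$. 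Squaring this matrix (wedging entries) and sandwiching by $P(0)=\mathrm{diag}(\mathbf 1_E,0)$, only the $E\oplus 0$ block survives and equals $\bigl(dz_i\wedge d\bar z_j\bigr)_{1\le i,j\le n}$, which is the matrix claimed in the statement. Finally, evaluating this $2$-form on $u,w\in E\cong T_{[0:1]}\bP(W)$ via $dz_i\wedge d\bar z_j(u,w)=u_i\bar w_j-w_i\bar u_j$ and contracting against $v\in E$ recovers precisely the intrinsic expression $v\mapsto\langle v,w\rangle u-\langle v,u\rangle w$.

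There is no deep obstacle here; the place where care is genuinely needed is the curvature-of-a-subbundle identity $F(\tina)=P\,(dP\wedge dP)\,P$ and the bookkeeping of wedge-orders and block indices when squaring $dP|_0$ — a spot where sign slips creep in. With the conventions fixed above ($F=\tina^2$ and the hermitian form linear in the first variable) the formula comes out exactly as stated; with other choices a harmless overall sign can appear. As an independent check — and a more conceptual alternative — one may invoke the $U(n{+}1)$-invariance of $F(\tina)$ noted just before the lemma: the curvature at $[0:1]$ is a $U(n)$-equivariant map $\Lambda^{1,1}T_{[0:1]}\bP(W)\to\mathfrak u(\tau^\perp_{[0:1]})$, the space of such maps is one-dimensional for $n\ge 2$, and the scalar is fixed by restricting to a linear $\bC\bP^1\subset\bP(W)$, where the first Chern form of $\tau^\perp$ integrates to $1$.
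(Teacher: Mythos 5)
Your main computation is correct and lands exactly on the statement, but it is organized differently from the paper's proof, and the difference is worth noting. The paper works in the same affine chart and also uses only that $\tina$ is the compression of the trivial connection, but it proceeds by hand on sections: it writes a section of $\tau^{\perp}$ over the chart as $v\mapsto(\tilde{s}(v),-v^*(\tilde{s}(v)))$, computes $\tina_w s$ as the orthogonal projection of the directional derivative, differentiates once more in the $u$-direction, evaluates at $v=0$ and antisymmetrizes to get $F(\tina)s(0)=\langle s(0),w\rangle u-\langle s(0),u\rangle w$, and only then passes to the matrix $dz_i\wedge d\bar z_j$. You instead encode everything in the projector $P(z)$ and invoke the Gauss-type identity $F(\tina)=P\,dP\wedge dP\,P$ (which you derive correctly from $P^2=P$ and $P\,dP\,P=0$), reducing the whole lemma to the block computation of $dP|_0$; this buys cleaner sign bookkeeping and a shorter calculation, at the price of first establishing the projector identity, which the paper's first-principles differentiation avoids. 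Both routes use the same chart, the same identifications of $T_{[0:1]}\bP(E\oplus\bC)$ and $\tau^{\perp}|_{[0:1]}$ with $E$, and the same final matrix-coefficient check, so the proofs are close in spirit even though the key computational device differs.

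One caveat on your closing "independent check": the space of $U(n)$-equivariant maps $\Lambda^{1,1}T_{[0:1]}\ra\mathfrak{u}(\tau^{\perp}_{[0:1]})$ is two-dimensional for $n\geq 2$ (the trace part $\omega\otimes i\,\mathrm{id}$ and the primitive/$\mathfrak{su}$ part occur independently), so a single normalization on a line $\bC\bP^1$ does not pin down the tensor; as stated, that alternative argument is incomplete. Since it is only offered as a cross-check and your direct computation stands on its own, this does not affect the validity of the proof.
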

\begin{proof} The standard chart centered at $[0:1]$ of the projective space is the graph map defined on $E\simeq \Hom(\bC,E)\ra \bP(E\oplus \bC)$. In this chart, $\tau^{\perp}$ at a point $v\in E$ consists of the subspace of $E\oplus \bC$ of the form:
 \begin{equation}\label{vww}\{(w, -v^*(w))~|~w\in E\},\end{equation}
 where $v^*\in E^*$ plays the role of the adjoint of $v\in E$ (seen as a linear map $\bC\ra E$). Of course, $v^*(w)=\langle w, v\rangle$ via  metric duality. 
  
  We will denote by $s:E\ra \tau^{\perp}\subset  E\oplus \bC$ a section and by $\tilde{s}=\pi_2\circ s$ the projection onto the $E$-factor which completely determines  $s$.
 
 The projection of $d_vs(w)=\frac{\partial s}{\partial w}(v)=(d_v\tilde{s}(w),-[w^*(\tilde{s}(v))+v^*(d_v\tilde s(w))])$ onto $\tau^{\perp}$ is:
 \[ v\ra (d_v\tilde{s}(w),-v^*(d_v\tilde{s}(w)))+\frac{w^*(\tilde{s}(v))}{v^*(v)+1}(v,-v^*(v)).
 \]
 We differentiate again in the $u$-direction the above function of $v$ and evaluate at $v=0$. The derivative of the first of the two terms in the $u$-direction at $0$ is:
 \begin{equation}\label{vww1} \left( \frac{\partial^2\tilde{s}}{\partial u\partial w}(0), -u^*\left(\frac{\partial \tilde{s}}{\partial w}(0)\right)\right).
 \end{equation}
The derivative in the $u$-direction of the second term is simply:
 \begin{equation}\label{vww2} w^*(\tilde{s}(0))(u,0).
 \end{equation}
 Summing up (\ref{vww1}) and $(\ref{vww2})$ we get a vector whose first component is
  \[\frac{\partial^2\tilde{s}}{\partial u\partial w}(0)+w^*(\tilde{s}(0))u.\] The second component does not matter since it vanishes when we project to $\tau^{\perp}\bigr|_{[0:1]}$.  The last thing left to do is skew-symmetrizing:
 \[ F(\tina)s(0)= \tina_u\nabla_ws-\tina_w\tina_u s=w^*(\tilde{s}(0))u-u^*(\tilde{s}(0))w=\langle s(0),w\rangle u-\langle s(0),u\rangle w,
 \]
where $s(0)=\tilde{s}(0)$ holds because of the identification of $\tau^{\perp}\bigr|_{[0:1]}$ with $E\oplus 0$.
 
Finally, we look at the $a_{ij}$-coefficient of the matrix determined by $u\wedge w$. This is
\[ \langle F(\tina){(u\wedge w)}e_j ,e_i\rangle=\langle e_j,w\rangle\langle u,e_i\rangle-\langle e_j,u\rangle\langle w,e_i\rangle=u_i\bar{w}_j-\bar{u}_jw_i=dz_i\wedge d\bar{z}_j(u\wedge w).\]
\end{proof}

We deduce that at the  point $[0:1]$ one has:
  \[ c_n(F(\tina))=\left(\frac{i}{2\pi}\right)^n\det [dz_i\wedge d\bar{z}_j]_{\substack {1\leq i\leq n\\1\leq j\leq n } }=n!\left(\frac{i}{2\pi}\right)^n\prod_{i=1}^n dz_i\wedge d\bar{z}_i
  \]
  where $\{dz_i,d\bar{z}_{j}\}$ is a basis of $1$-forms in the standard chart centered at $[0:1]$. 
  
   On the other hand, it is standard (a consequence of Wirtinger Theorem, for example) that 
  \[ \int_{\bP(E\oplus \bC)}\eta^{\wedge n}=\deg (\bP(E\oplus \bC))=1,
  \]
  where $\eta$ is the $U(n+1)$-invariant, canonical symplectic form on $\bP(E\oplus \bC)$ that has the expression  $\eta_0:=\frac{i}{2\pi}\sum_i dz_i\wedge d\overline{z}_i$ at $[0:1]$. Notice that $\eta_0^{\wedge n}=c_n(F(\tina))_{[0:1]}$. It follows that the integral we are after is constantly equal to $1$. 
  
  Therefore Corollary \ref{impcor} becomes:
  \[\boxed{ c_n(F(\nabla))-[s^{-1}(0)]=d\mathscr{T}.}
  \]
  \begin{remark} The convention we used here was to compactify the flow $t\ra \tilde\Gamma_{tv}\in \bP(E\oplus\bC)$ where $\tilde{\Gamma}_v:=\{(\lambda v,\lambda)\}\subset E\oplus\bC$ is what we called the switched graph of $v\in E$. This terminology/convention only has relevance in the infinite dimensional case where $v$ represents a Fredholm operator. Nevertheless we chose to stick to it. In contrast, Harvey and Lawson compactify $t\ra \{(v\lambda,t\lambda)~|~\lambda \in \bC\}$.  
    \end{remark}

\section{\bf {The Chern-Gauss-Bonnet Theorem reloaded}}

One finds again at least two different versions of the \emph{local} Chern-Gauss-Bonnet theorem in the work of Harvey and Lawson's work \cite{HL1} and \cite{HZ}. We give yet another one, mainly because of its simplicity and geometric appeal and because it does not require the construction of a Thom form with compact support a priori, which most of the proofs do.

Let $E\ra B$ be an oriented real vector bundle of rank $n$ over a compact manifold $B$. Suppose $E$ is endowed with a Riemannian metric. Let $\nabla$ be a connection on $E$, not necessarily compatible with the metric.  Let $P:=S(\bR\oplus E)\ra B$ be the sphere bundle over $B$. We will regard $E$ embedded in $S( \bR\oplus E)$ via the stereographic map.  Fiberwise, the (positively oriented) stereographic inclusion is given by:
\[ \eSt(v)=\frac{1}{1+|v|^2}(1-|v|^2,2v),\quad\quad v\in E.
\]
 We will  use the symbol $[0]$ for the zero  section in $E$ and for its image $(1,0)$ in $S(\bR\oplus E)$, while $[\infty]$ will represent  the section  $(-1,0)$.  We will also use the same letter $s$ for a section of $E$ as for its image via the stereographic map.

 On $S(\bR\oplus E)$  consider the flow given by minus the "height" function $f(t,v)=-t$. We have $S([\infty])=P\setminus [0]$, $ U([\infty])=[\infty]$, $S([0])=[0]$ and $U(0)=P\setminus [\infty]$.
 
  Notice that $\bR\oplus \pi^*E\ra S(\bR\oplus E)$ has a  line subbundle $\nu$ induced by the tautological section. This tautological section is nothing but the unit normal vector field along each sphere, along each fiber. 
 
  The form $\omega\in\Omega^*(P)$ to be flown is:
  \[ \omega= \frac{1}{(2\pi)^{n/2}}\Pf(\tina),
  \] the Pfaffian of the connection $\tina$ which results from projecting $d\oplus \pi^*\nabla$ (which acts on $\bR\oplus \pi^*E$) onto $\nu^{\perp}$. 
 
 If $s:B\ra E$ is transversal to the zero section then it obviously stays transversal to $[0]$ inside $S (\bR\oplus E)$. 
 
 Corollary \ref{impcor} gives us the following:
 \[  \omega_1 [s^{-1}(P\setminus [\infty])]-(\eta_1 [s^{-1}(0)] +\eta_2[s^{-1}(P\setminus [0])])=d\mathscr{T}.
 \]
where the residues are as follows:
\begin{itemize}
\item $\omega_1$ is the restriction of $\omega$ to $[0]$ and this is exactly $\frac{1}{(2\pi)^{n/2}}\Pf(\nabla)$, since one has a canonical isomorphism of bundles with connection $(\tau^{\perp}\bigr|_{[0]}, \tina\bigr|_{[0]})\simeq (E,\nabla)$;
\item $\eta_2$ is the restriction of $\omega$ to $[\infty]$, which is further pulled-back to $s^{-1}(P\setminus [0])$ via the flow map;
\item $\eta_1$ is the fiber integral $\displaystyle\int_{P/B}\omega$ (restricted to $s^{-1}(0)$). 
\end{itemize}
Notice that due to orientation reasons
\begin{equation}\label{etaome} \eta_2(b)=-\omega_1(b)\quad\quad\forall b\in s^{-1}(P\setminus ([0]\cup [\infty])),
\end{equation}
Indeed, the tangent space to the north pole of the unit sphere (with the standard orientation as the boundary of the unit ball) has the opposite orientation as the tangent space to the south pole, when they both get identified in the canonical way with the horizontal space passing through the origin. Therefore the natural identification north-south reverses orientation and taking into account the symmetry of the $\omega$ one gets the claim (\ref{etaome}). 

We are left with computing $\eta_1=\displaystyle\int_{P/B} \omega$. Now $\omega$ restricted to each fiber is the normalized Pfaffian of the Levi-Civita connection on the unit sphere with the \emph{round metric}. This is the \emph{universal computation} one has to perform in this case. Due to rotational symmetry this Pfaffian is a certain multiple of the standard volume form of the sphere, multiple which can be computed at any favorite point by using a "real version" of Lemma \ref{ftina}. The computations are similar and the result well-known (see for example \cite{Sp}, Lemma 2.18, page 291)
\begin{lemma}  The tangent space of the sphere $S(\bR\oplus E_b)$ at $(1,0,\ldots ,0)$  can be canonically identified with $E_b$ via $(0,v)\ra v$ and this identification  is orientation preserving. The curvature of the Levi-Civita connection on the standard unit sphere is the following $2$-form with values in $\mathfrak{so}(E_b)$:
\[ F(\nabla)(X\wedge Y)Z=\langle Y,Z \rangle X-\langle X,Z\rangle Y.
\]
\end{lemma}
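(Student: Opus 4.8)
The statement is the classical fact that the round unit sphere $S^n\subset\bR\oplus E_b\cong\bR^{n+1}$ has constant sectional curvature $1$, repackaged in exactly the form one needs in order to recognize $\omega\bigr|_{P_b}$ as a multiple of the round volume form, together with an orientation bookkeeping at the north pole. The plan is to prove it in two independent pieces: the curvature formula and the orientation claim.

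For the curvature, the quickest route is the Gauss equation. Write $\bar\nabla$ for the flat connection on $\bR\oplus E_b$ and regard $S(\bR\oplus E_b)$ as a hypersurface with outward unit normal $N(p)=p$. Since the ambient connection is flat and $\bar\nabla_X N=X$ for every tangent vector $X$ (the normal field is just the position vector), the shape operator is $\pm\,\id$, the second fundamental form is $II(X,Y)=\pm\langle X,Y\rangle$, and the Gauss equation
\[ \langle F(\nabla)(X\wedge Y)Z,W\rangle = II(X,W)\,II(Y,Z)-II(X,Z)\,II(Y,W)\]
collapses, the sign of $II$ dropping out since it appears quadratically, to $\langle X,W\rangle\langle Y,Z\rangle-\langle X,Z\rangle\langle Y,W\rangle$, which is $\langle\,\langle Y,Z\rangle X-\langle X,Z\rangle Y,\,W\rangle$. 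Alternatively one may avoid quoting the Gauss equation and instead mimic \emph{verbatim} the computation of Lemma \ref{ftina}: the Levi-Civita connection on the sphere is the tangential part of $\bar\nabla$, so differentiating a section twice in the stereographic chart centered at the north pole and then skew-symmetrizing yields the same expression $\langle Y,Z\rangle X-\langle X,Z\rangle Y$, the correction term coming from the projection onto $N^\perp$ playing the role of the analogous term in the proof of Lemma \ref{ftina}. Either way this is standard (cf. \cite{Sp}, Lemma 2.18, p. 291).

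For the orientation statement, recall that $S(\bR\oplus E_b)$ carries the boundary orientation of the unit ball in $\bR\oplus E_b$, the latter given the product orientation ($\bR$-factor first), with the boundary convention ``outward normal first''. At the north pole $p=(1,0,\ldots,0)$ the outward normal is the unit vector $e_0$ of the $\bR$-factor, so a basis $(v_1,\ldots,v_n)$ of $T_pS(\bR\oplus E_b)$ is positively oriented exactly when $(e_0,v_1,\ldots,v_n)$ is positively oriented in $\bR\oplus E_b$, i.e.\ exactly when $(v_1,\ldots,v_n)$ is positively oriented in $E_b$. Under $(0,v)\mapsto v$ this says the identification is orientation preserving, consistently with the conventions used in Lemma \ref{ftina}.

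There is no genuine obstacle here: everything is classical, and the only point demanding care is the consistent choice of sign conventions for the curvature tensor and of orientation conventions for a boundary hypersurface, so that the formula dovetails with the one obtained for $\tina$ in the previous section — in particular so that $\omega\bigr|_{P_b}$ is a \emph{positive} multiple of the round volume form and $\eta_1=\int_{P/B}\omega$ comes out with the correct sign in the Chern-Gauss-Bonnet argument. I would fix once and for all the curvature convention $F(\nabla)=[\nabla,\nabla]-\nabla_{[\cdot,\cdot]}$ together with ``outward normal first'', and check the two computations above against it.
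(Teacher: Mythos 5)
Your proof is correct. The paper itself essentially gives no argument: it says the computation is analogous to the one in Lemma \ref{ftina} (second derivative of a section in the chart at the pole, project onto the tangent bundle, skew-symmetrize) and cites Spivak. Your second, ``verbatim mimic'' route is exactly that, so on that branch you coincide with the paper. Your primary route via the Gauss equation is genuinely different and in fact cleaner: taking the outward normal $N(p)=p$, noting $\bar\nabla_X N=X$, and plugging $II(X,Y)=\pm\langle X,Y\rangle$ into the Gauss equation gives the constant-curvature-one formula with no chart computation, and it makes transparent why the sign ambiguity in the normal/shape operator is irrelevant (it enters quadratically). What the paper's chart approach buys is uniformity with the complex case already worked out, producing the curvature directly in the matrix form that feeds into the Pfaffian and the fiber integral; what your Gauss-equation approach buys is brevity and a convention-robust derivation, provided (as you note) you pin the curvature sign convention $F(\nabla)(X,Y)=\nabla_X\nabla_Y-\nabla_Y\nabla_X-\nabla_{[X,Y]}$ so the answer has the stated sign. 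Your orientation argument (boundary orientation of the ball, outward normal first, normal at the north pole being the $\bR$-factor) is the same bookkeeping the paper implicitly uses in its remark comparing the poles, and it is correct.
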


At this point we can stop pretending we do not know the answer.
\[ \int_{P/B} \omega \equiv\chi(S^n)=1+(-1)^n.
\]
So in the even rank case one gets the following refinement of the Chern-Gauss-Bonnet:
\[ \boxed{2 \frac{1}{(2\pi)^{n/2}}\Pf(\nabla)-2[s^{-1}(0)]=d\mathscr{T}}.
\]

\begin{remark} With hindsight, it is natural to consider in each fiber the space $S(\bR\oplus E_b)$ in view of the previous section. This is the projective space of \emph{oriented} lines inside $\bR\oplus E_b$. The other option would be  to work with twisted differential forms on $\bP(\bR\oplus E_b)$ with fibers which are not oriented and that is the context in which Harvey and Lawson develop their results. 
\end{remark}

\section{\bf{Odd Chern classes and the Maslov spark}} \label{occMs}
The odd Chern classes are characteristic classes associated with (pointwise) invertible endomorphisms $U:E\ra E$ of complex vector bundles and they live in the cohomology groups of odd degree of the base manifold $B$. It is standard (see \cite{Ka}) that such  an invertible endomorphism determines an odd, complex $K$-theory class $[U]\in K^{-1}(B)$. Applying the odd Chern character to $[U]$ one gets a non homogeneous class of odd degree with rational coefficients. In fact one can write:
\[ \ch[U]=\sum_{k=1}^{\rank {E}} \frac{(-1)^{k-1}}{(k-1)!} c_{k-1/2}([U]),
\]
with  $c_{k-1/2}([U])\in H^{2k-1}(B;\mathbb{Z})$ the classes which will be the focus of our attention in this section. If $E$ is the trivial bundle then $c_{k-1/2}$ are  pull-backs via $U$ of the generators of the cohomology ring of invertible matrices.   This group is homotopic with the group of unitary matrices and it is convenient to consider classes of unitary endomorphisms to be representatives of odd $K$-theory.

It is legitimate to ask whether there exists an odd counterpart to the classical Chern-Weil theory. In other words, given a triple $(E,U,\nabla)$ with $E$ a hermitian vector bundle, $U$ a unitary endomorphism and $\nabla$ a connection on $E$, can one manufacture a closed form that represents $c_{2k-1}([U])$ in the de Rham cohomology? The answer is: it depends. The situation is as follows:
\begin{itemize}
\item[(a)] If $(E,\nabla)$ is a flat bundle with a flat connection than the answer is yes. The forms
\[ \gamma_k\tr \wedge^{2k-1}U^{-1}(\nabla U)
\]
where $\gamma_k$ are some universal constants (to be discussed below) are closed forms and represent the classes $c_{2k-1}([U])$. 

In general, if $\nabla$ is not flat, the higher degree forms (for $k>1$) built by the same procedure are not necessarily closed. 
\item[(b)] The class  $c_{1/2}([U])\in H^{1}(B;\mathbb{Z})$ can always be represented by $\frac{1}{2\pi i} U^{-1}(\nabla U)$, whether or not $\nabla$ is flat.
\item[(c)] If one is not afraid of superconnections    then Quillen (\cite{Qu}) shows that one can extend the theory of Chern character forms associated to a superconnection built out of $\nabla$ and a self-adjoint operator to include unitary operators. These closed forms represent $\ch[U]$ in cohomology. The nice thing about Quillen's theory is that these forms are in a certain sense universal and there exists a general framework which includes both even and odd $K$-theory (see Section \ref{SCcf} for details).
 \end{itemize}
 
 Despite the many similarities between the odd and even $K$-theory and we have in mind here the fact that they are both classified by certain grassmannians there is one particular reason for which the analogy cannot be pushed too far, at least when the aim is building a Chern-Weil theory. The reason  is the lack of a  representation theorem such as the  Narasimhan-Ramanan in the odd case.  This celebrated result says that every pair  $(E,\nabla)$ of a vector bundle with connection is isomorphic with the pullback of a universal such pair via a classification map. The universal pair is a tautological bundle  with his Chern connection  over a Grassmannian.  In the odd case, the relevant tautological bundle which lives now over a Lagrangian Grassmannian  is a canonically trivializable vector bundle (see \cite{Ci}, Theorem 3.11 (c))  and therefore there cannot exist a theorem that says that \emph{every} triple $(E,U,\nabla)$ comes from a \emph{universal} such triple via pull-back. 
 
 In this section, we will treat completely the case $(b)$ above and also the case $(a)$ when $(E,\nabla)$ is a trivial bundle with trivial connection.

 Let $E\ra B$ be a hermitian vector bundle and let $U:B\ra U(E)$ be a unitary bundle endomorphism. Suppose $E$ is endowed with a connection, not necessarily metric compatible. Let $P=U(E)$ be the fiber bundle of unitary endomorphisms on which we consider the flow given by the vertical gradient of the function:
 \[ f(U)=\Real\Tr{U}.
 \] 
 This flow has a simple explicit description (see \cite{VD}). In a fiber $U(E_b)$ the gradient is given by $U\ra 1-U^2$ and the flow is completely determined:
 \[ \phi(t,U)=\frac{\tanh{t}+U}{1+U\tanh{t}}.
 \]
  One checks that:
  \[ [\lim_{t\ra \infty} \phi_t(U)](v)=\left\{\begin{array}{cc}-v&  v\in\Ker{(1+U)}\\
      v & v\in\Ker{(1+U)}^{\perp} \end{array}\right. \]\[
       [\lim_{t\ra -\infty} \phi_t(U)](v)=\left\{\begin{array}{cc}v&  v\in\Ker{(1-U)}\\
      -v & v\in\Ker{(1-U)}^{\perp} \end{array}\right.
 \]
 The critical manifolds are in bijective correspondence with $\bigcup_{k=0}^n \Gr(k,E_b)$ where each $L\in \Gr(k,E_b)$ determines a unitary reflection:
 \[ U_L=(-\id_{L})\oplus \id_{L^{\perp}}.
 \]
 The stable and unstable manifolds of the critical manifold $\Gr(k,E_b)$ are
 \[ S(\underline {k}):=\{U\in U(E_b)~|~\dim{\Ker{(1+U)}}=k\} \]\[ U(\ul{k}):=\{U\in U(E_b)~|~\dim{\Ker{(1-U)}}=n-k\}.\]
 The set $S(\ul{1})$ is sometimes called the Maslov cycle and we will use a special notation for it: $\mathscr{M}$.
 
 The form of interest here is $\omega=\frac{1}{2\pi i} \tr U^{-1}(\nabla U)$. This form $\omega$ is in fact the pull-back via the section $U$ of a form on $P=U(E)$.  Indeed, one can take $\tilde{U}$ to represent the tautological unitary endomorphism of $\pi^*E\ra U(E)$ and with  $\pi^*\nabla$ on $\pi^*E$ one considers $\tilde{\omega}:=\frac{1}{2\pi i}\tilde{U}(\pi^*\nabla \tilde{U})$.  In any case, we have the following.
 
  \begin{lemma} The form $\frac{1}{2\pi i} \tr U^{-1}(\nabla U)$ is closed.
 \end{lemma}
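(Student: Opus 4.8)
The plan is to show that the connection $\nabla$ disappears upon taking the trace, so that $\frac{1}{2\pi i}\tr U^{-1}(\nabla U)$ coincides with the classical invariant form $\frac{1}{2\pi i}\tr(U^{-1}dU)$, whose closedness is a routine consequence of the graded cyclicity of the trace.

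First I would work in a local trivialization of $E$ and write $\nabla=d+A$, with $A$ the $\End(E)$-valued connection $1$-form. The induced connection on $\End(E)$ acts by $\nabla U=dU+[A,U]$, hence
\[
U^{-1}(\nabla U)=U^{-1}dU+U^{-1}AU-A.
\]
Applying $\tr$ and using that $U$ and $U^{-1}$ are $0$-forms, so that they may be cycled past the $1$-form $A$ with no sign, the last two terms cancel and
\[
\tr\bigl(U^{-1}(\nabla U)\bigr)=\tr(U^{-1}dU).
\]
In particular the globally defined $1$-form $\tr U^{-1}(\nabla U)$ is independent of $\nabla$ and equals $\tr(U^{-1}dU)$ in every local trivialization.

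Next I would differentiate. Since $d$ commutes with $\tr$ and $dU^{-1}=-U^{-1}(dU)U^{-1}$,
\[
d\,\tr(U^{-1}dU)=\tr\bigl(dU^{-1}\wedge dU\bigr)=-\tr\bigl((U^{-1}dU)\wedge(U^{-1}dU)\bigr).
\]
Setting $\theta:=U^{-1}dU$, a matrix-valued $1$-form, graded cyclicity of the trace gives $\tr(\theta\wedge\theta)=(-1)^{1\cdot1}\tr(\theta\wedge\theta)$, hence $\tr(\theta\wedge\theta)=0$. Thus $d\,\tr U^{-1}(\nabla U)=0$ in each local trivialization, and since the form is globally defined it is closed; note that neither the unitarity of $U$ nor metric compatibility of $\nabla$ is used.

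There is essentially no obstacle here; the only points requiring a little care are the sign bookkeeping when moving the $0$-forms $U^{\pm1}$ past $A$ and when invoking graded cyclicity, both of which are harmless since all forms involved have degree $0$ or $1$.
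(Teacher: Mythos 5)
Your proof is correct and follows essentially the same route as the paper: in a local trivialization the connection term $\tr(U^{-1}AU)-\tr A$ cancels by cyclicity, reducing to $\tr(U^{-1}dU)$, whose closedness follows from the same algebra (your direct use of graded cyclicity on $\tr(\theta\wedge\theta)$ is just a repackaging of the paper's identities $d(U^{-1}dU)=-\tfrac{1}{2}[U^{-1}dU,U^{-1}dU]$ and $\tr[\cdot,\cdot]=0$). No gaps.
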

 \begin{proof} In local coordinates if $\nabla=d+\Theta$ then $(d+\Theta)U=dU+[\Theta,U]$ and therefore:
 \[ \tr U^{-1}(\nabla U)=\tr U^{-1}(dU)+\tr(U^*\Theta U)-\tr \Theta=\tr U^{-1}(dU).
 \]
 Now $\tr U^{-1}(dU)$ is closed as follows from the relations:
 \[ [d,\tr]=0,\quad\quad d(U^{-1}(dU))=-\frac{1}{2}[U^{-1}(dU),U^{-1}(dU)],\quad \mbox{and}\quad \tr[\cdot,\cdot]=0.
 \]
 \end{proof}
 
 If we assume that $U$ is stably normal, meaning that $U$ is transverse to all $S(k)$, then we can apply Theorem \ref{refhomf} to conclude that
 \begin{equation}\label{Maseq}\boxed{\frac{1}{2\pi i} \tr U^{-1}(\nabla U)-[U^{-1}(\mathscr{M})]=d\mathscr{T}.} 
 \end{equation}
 The only explanation missing is the computation of the residue 
 \[\int_{U(\ul{1})/ \Gr(1,n)}\tilde{\omega}.\]
 Now $U(\ul{1}):=\{U~|~\dim{\Ker{(1-U)}}=n-1\}$ is a manifold of real dimension $2n-1$ and as unstable manifold it has a Bott-Samelson resolution in the form of the map:
 \begin{equation}\label{BSres}
  S^1\times \Gr(1,n)\ra U(n), \qquad (\lambda, L)\ra (\lambda \id_{L})\oplus \id_{L^{\perp}}\in U(L\oplus L^{\perp}) \end{equation}
 Notice that the image of this map is exactly $U(\ul{1})\cup \{\id\}$. The fiber of $U(\ul{1})\ra \Gr(1,n)$ can be identified with $S^1\setminus\{1\}$ and it is not hard to see that the form $\frac{1}{2\pi i}\tr U^{-1}(dU)$ on $U(\ul{1})$ (indeed, the trivial connection as one does integration fiber by fiber) pulls back to the angular form $\frac{1}{2\pi i}\lambda^{-1}d\lambda$ on $S^1$ which  integrates to $1$. This finishes the proof of (\ref {Maseq}).  
 
\begin{remark} The equation (\ref{Maseq}) is an example of a spark  equation of degree $0$, in the terminology introduced in \cite{HLZ}. It induces a degree $0$ de Rham-Federer differential character.
 \end{remark}

 \vspace{0.3cm}
 We now turn to $(a)$. As we said for a general hermitian vector bundle $E$ with unitary morphism $U$ and connection $\nabla$ the naive choice of forms $\tr\wedge^{2k-1} U^{-1}(\nabla U)$ does not produce closed forms in general. We will therefore assume that $E$ is the trivial bundle with the trivial connection.  Moreover, we will take $U:\underline{\bC^n}\ra \underline{\bC^n}$ to be a unitary endomorphism of vector bundles over $B$. Let us say that from the point of view of  odd $K$-theory this  is not a dramatic restriction as every class in $K^{-1}(B)$ can be represented by a triple of this type.
 
  It turns out that even if we had a way to produce closed forms then there is no candidate for a good flow. The flow we used above does not serve as the other stable manifolds $S(k)$ with $k\geq 2$ have codimension $k^2$ and therefore their preimages cannot be Poincar\'e duals to forms of degree $2k-1$.

 For all this reasons we restrict our attention to maps $U:B\ra U(n)$ and consider the flow on $U(n)$ induced by a self-adjoint matrix $A$ with distinct eigenvalues $\{\lambda_1<\lambda_2< \ldots<\lambda_n\}$ via the gradient of the map:
 \[  f_A:U(n)\ra \bR,\quad\quad f(U)=\Real\Tr(AU).
 \] 
 Such flows  have been extensively studied in \cite{VD} and \cite{Ni3}. We recall now their explicit form (Proposition 2.1 in \cite{VD}).
 
 \begin{prop} The function $f$ is Morse and its  gradient flow  is determined by:
 \[(t, U_0)\ra (\sinh{(At)}+\cosh{(At)}U_0)(\cosh{(At)}+\sinh{(At)}U_0)^{-1}.
 \]
 \end{prop}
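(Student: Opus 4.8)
The plan is to establish the two claims separately: the Morse property through an explicit description of the critical set and of the Hessian there, and the flow formula by recognising $\grad f_A$ as a matrix Riccati vector field and integrating it by linearisation. To begin, I would compute the gradient. Equip $U(n)$ with the bi-invariant metric induced by $\langle X,Y\rangle=-\tfrac12\Real\Tr(XY)$ on $\mathfrak u(n)$ — the constant is just a normalisation, chosen so that for $A=\id$ one recovers the gradient $U\ra 1-U^2$ and the flow $(\tanh t+U)(1+U\tanh t)^{-1}$ recalled earlier. Writing a tangent vector at $U$ as $UX$ with $X\in\mathfrak u(n)$, one has $df_A|_U(UX)=\Real\Tr(AUX)$; the Hermitian part of $AU$ annihilates every skew-Hermitian $X$ under the real trace pairing, so taking the skew-Hermitian part of $AU$ gives $\grad f_A(U)=A-UAU$. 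In particular $U$ is a critical point of $f_A$ exactly when $UAU=A$, equivalently when $AU$ is self-adjoint.

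For the Morse property I would diagonalise $A=\Lambda=\operatorname{diag}(\lambda_1,\dots,\lambda_n)$. From $U\Lambda U=\Lambda$ one gets $(U\Lambda)^2=\Lambda^2$, so $U\Lambda$ commutes with $\Lambda^2$; as $\Lambda^2$ has simple spectrum the matrix $U\Lambda$ is diagonal, hence $U=\operatorname{diag}(\varepsilon_1,\dots,\varepsilon_n)$ with each $\varepsilon_i\in\{\pm1\}$. This yields exactly $2^n$ isolated critical points, matching the total Betti number of $U(n)$, whose rational cohomology is exterior on generators of degrees $1,3,\dots,2n-1$; it remains only to compute $\Hess f_A$ at each of them in the $A$-eigenbasis and check non-degeneracy, which is a short calculation. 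This step is where the arithmetic of the $\lambda_i$ really enters: one needs them nonzero and of pairwise distinct absolute value (a resonance $\lambda_i=-\lambda_j$ turns some of these points into positive-dimensional critical submanifolds, leaving only a Morse--Bott statement). One then concludes that $f_A$ is a perfect Morse function.

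For the flow formula, the equation $\dot U=A-UAU$ is a matrix Riccati equation, and I would integrate it by the classical substitution $U=PQ^{-1}$: if $P,Q\colon\bR\ra M_n(\bC)$ solve the linear system $\dot P=AQ$, $\dot Q=AP$ with $P(0)=U_0$, $Q(0)=\id$, and $Q(t)$ is invertible, then $U=PQ^{-1}$ satisfies $\dot U=\dot PQ^{-1}-PQ^{-1}\dot QQ^{-1}=A-UAU$ and $U(0)=U_0$. This linear system is solved by
\[ \begin{pmatrix}P(t)\\ Q(t)\end{pmatrix}=\exp\!\left(t\begin{pmatrix}0&A\\ A&0\end{pmatrix}\right)\begin{pmatrix}U_0\\ \id\end{pmatrix}=\begin{pmatrix}\cosh(At)&\sinh(At)\\ \sinh(At)&\cosh(At)\end{pmatrix}\begin{pmatrix}U_0\\ \id\end{pmatrix}, \]
using that the square of the block matrix equals $\operatorname{diag}(A^2,A^2)$. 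Hence $P(t)=\sinh(At)+\cosh(At)U_0$ and $Q(t)=\cosh(At)+\sinh(At)U_0$, which is exactly the asserted expression, once one knows that $Q(t)$ is invertible for all $t$. To see this — and simultaneously that $U(t)\in U(n)$ — I would exploit the conserved quantity $P^*P-Q^*Q$: since $A$ is self-adjoint, $\tfrac{d}{dt}(P^*P-Q^*Q)=Q^*AP+P^*AQ-P^*AQ-Q^*AP=0$, so $P(t)^*P(t)-Q(t)^*Q(t)\equiv U_0^*U_0-\id=0$. Therefore $\ker Q(t)=\ker P(t)$, while $\ker P(t)\cap\ker Q(t)=0$ because $(P(t),Q(t))$ is obtained from the injective map $\xi\mapsto(U_0\xi,\xi)$ by an invertible transformation; so $Q(t)$ is invertible and $U(t)^*U(t)=(Q(t)^{-1})^*\big(P(t)^*P(t)\big)Q(t)^{-1}=(Q(t)^{-1})^*\big(Q(t)^*Q(t)\big)Q(t)^{-1}=\id$. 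By uniqueness of integral curves of the smooth field $\grad f_A$ on the compact manifold $U(n)$, $U(t)$ is then the gradient flow, which proves the Proposition.

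The two genuinely non-formal points, which I expect to be the main obstacles, are: on the Morse side, the non-degeneracy of $\Hess f_A$ at the $2^n$ fixed points, where the distinctness and non-resonance of the eigenvalues of $A$ are used decisively; and on the flow side, the fact that the closed-form expression never leaves $U(n)$, i.e. that $Q(t)$ stays invertible, which is precisely what the conserved quantity $P^*P-Q^*Q$ guarantees.
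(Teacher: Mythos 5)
The paper offers no proof of this Proposition: it is quoted as Proposition 2.1 of Dynnikov--Veselov \cite{VD}, so there is no internal argument to compare yours against, and your self-contained verification is essentially correct. The flow half is complete: with the bi-invariant metric normalized so that the case $A=\id$ reproduces the gradient $1-U^2$ and the flow $(\tanh t+U)(1+U\tanh t)^{-1}$ recalled earlier in the paper, one indeed gets $\grad f_A(U)=A-UAU$, and your Riccati linearization $U=PQ^{-1}$, the block exponential, and the conservation of $P^*P-Q^*Q$ (which yields invertibility of $Q(t)$ and unitarity of $U(t)$, hence the assertion by uniqueness of integral curves) are all correct. On the Morse half you defer the Hessian computation; for the record, at $U_\varepsilon=\operatorname{diag}(\varepsilon_1,\dots,\varepsilon_n)$ one finds, for $X=(x_{ij})\in\mathfrak{u}(n)$, that the second variation of $f_A$ along $t\mapsto U_\varepsilon e^{tX}$ equals $-\sum_i\varepsilon_i\lambda_i|x_{ii}|^2-\sum_{i<j}(\varepsilon_i\lambda_i+\varepsilon_j\lambda_j)|x_{ij}|^2$, which confirms both your count of $2^n$ critical points and your identification of where the arithmetic of the $\lambda_i$ enters. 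That caveat is correct and worth keeping: with only ``distinct eigenvalues'', as in the statement, the Morse claim can fail --- for $A=\operatorname{diag}(-1,1)$ every $U=A(2P-\id)$ with $P$ a rank-one orthogonal projection is critical, giving a two-sphere of critical points, and an eigenvalue equal to $0$ likewise produces circles of critical points --- so one genuinely needs $\lambda_i\neq 0$ and $\lambda_i+\lambda_j\neq 0$, a genericity hypothesis that the paper's subsequent description of the critical set as the $2^n$ reflections $U_I$ uses implicitly.
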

In order to better analyze this flow one fixes an orthonormal basis $\{e_1,\ldots, e_n\}$ of eigenvectors for $A$, $Ae_i=\lambda_i$ and a flag
\[ W_0=\bC^n\supset W_1\supset\ldots\supset W_{n-1}\supset W_{n}=\{0\}
\] 
 such that $W_{k}^{\perp}=\langle e_1,\dots, e_k\rangle$. This choice of the flag is related to the definition of Schubert varieties on the Lagrangian Grassmannian in the infinite dimensional context (\cite{Ci}).
 
 The critical points of this flow are orthogonal reflections $U=-\id_{V}+\id_{V^{\perp}}$, where $V$ is an eigenspace of $A$. One can take obviously take $V=\langle e_{i_1},\ldots, e_{i_{k}}\rangle$, for some ordered set $I=\{i_1<\ldots< i_k\}\subset\{1,2,\ldots,n\}$ and we use notation $U_I$ for this critical point. 
 
 The stable manifold of $U_I$ is  a certain Schubert manifold defined by the following incidence relations where we set $i_0:=0$ and $i_{k+1}=\infty$:
 \[ S(U_I)=\{U~|~\dim{[\Ker{(1+U)}\cap W_m]}=k-p, \quad \forall 0\leq p\leq k,\quad \forall i_p\leq m< i_{p+1}   \}.
 \]
 The incidence relations are saying  that $\dim{\Ker{(1+U)}}=k$ and the numbers $i_1$,\ldots ,$i_k$ record the "nodes" of the flag where the dimension of the intersection $\Ker{(1+U)}\cap W_m$ drops by one. These correspond exactly to the Schubert manifolds considered in \cite{Ci} on the Lagrangian Grassmannian $\Lag(\bC^n\oplus\bC^n)$ via the Arnold diffeomorphism:
 \[ U\ra \Imag\{ v\ra ((1+U)v,-i(1-U)v)\}.
 \]   It is not difficult  to check that $S(U_V)$ has codimension $N_I:=\displaystyle\sum_{i\in I} 2i-1$ in $U(n)$ as proved in \cite{Ci}. Let us emphasize that for $I=\{k\}$, the closure of $S(U_{\{k\}})$ is also the closure of the following incidence manifold:
 \[ Z_{\{k\}}=\{U~|~\dim{[\Ker{(1+U)}\cap W_{k-1}]}=1\}
 \]
 
 Dually one can show that 
 \[ U(U_I)=\{U~|~\dim{[\Ker{(1-U)}\cap W_m]}=n-k-q,\quad\forall 0\leq q\leq n-k,\; \forall j_q\leq m < j_{q+1}\},
 \]
 where $\{j_1<\ldots< j_{n-k}\}=: I^c$ is the complement set of $I$. It has dimension $N_I$.
 
Let $g^{-1}dg$ be the Maurer-Cartan form on $U(n)$. The forms to be flown are:
 \[  c_{k-1/2}:=-\left(\frac{i}{2\pi}\right)^k\frac{[(k-1)!]^2}{(2k-1)!}\tr \wedge^{2k-1}g^{-1}dg,
 \] 
One shows easily that these forms are closed using the relation $d\tr \omega=\tr [d,\omega]$ . Therefore $c_{k-1/2}$ determines a class in $H^{2k-1}(B)$ by pulling it back via the initial map/section $U$. The constants are of course chosen a posteriori after computing the residues. We explain now how this is done.

 The relevant unstable manifold where the integral of $c_{k-1/2}$ is not zero (see (\ref{intck}) and Appendix B) is
 \[U(U_{\{k\}})=\{U~|~ \dim{[\Ker{(1-U)}\cap W_m]}=n-1-m,\quad \forall 0\leq m\leq k-1,\qquad\]
  \[ \qquad \dim{[\Ker{(1-U)}\cap W_m]=n-m,\quad\forall k\leq  m\leq n} \}
  \]
 Notice that if $U\in U(U_{\{k\}})$ then $\Ker{(1-U)}\supset W_k$ and to simplify the computation we mod out $\Ker{(1-U)}$ by considering the analogous  unstable manifold  $\tilde{U}(U_{\{k\}})\subset U(k)$ and the obvious diffeomorphism:
 \[ \tilde{U}(U_{\{k\}})\ra U(U_{\{k\}}),\quad\quad U\ra U\oplus \id_{W_{k}},
 \]
 induced by the natural inclusion $U(k)\hookrightarrow U(n)$ with  the same expression.
 
Notice that the incidence manifold $\tilde{U}(U_{\{k\}})$  is an open dense subset of the manifold
 \[\mathscr{U}_k:=\{U\in U(k)~|~\dim{\Ker{(1-U)}} =k-1\}\]
  because, generically a hyperplane in $\bC^k$ will intersect $W_i$ in dimension $k-1-i$. This manifold already appeared in (\ref{BSres}) with a Bott-Samelson resolution. In fact, topologically, the closure of $\mathscr{U}_{k}$ is the one point compactification of the trivial bundle $\bR\ra \bC\bP^k$ and  it is homeomorphic with $\Sigma\bC\bP^k$.

Now, the Maurer-Cartan form on $U(n)$ pulls back to the Maurer-Cartan form  on $U(k)$. Using the above Bott-Samelson resolution we compute the following integral in Appendix \ref{resU}, where the reader will find also a discussion about orientations conventions.

\begin{equation}\label{intck} \int_{\mathscr{U}_k}c_{k-1/2}=1.
\end{equation}

On the other hand, it is easy to explain why 

\[ \int_{U(U_I)}c_{k-1/2}=0,
\] 
\noindent
for any other $U_I$. Take $I$ such that $U(U_I)$ has dimension $2k-1$ but $U_I\neq U_{\{k\}}$.  Then $\max{\{i\in I\}}\leq k-1$. This implies that $\Ker{(1-U)}\supset W_{k-1}$ for all $U\in U(U_I)$. Indeed, if $\iota=\max{I}$ then for every $m\geq \iota$ we have $m=\iota+s=j_{\iota-l+s}$ for some $s\geq 0$ since there are exactly $\iota-l+s$ numbers smaller than $\iota+p$ which are not in $I$. But this implies that $j_{p}\leq m < j_{p+1}$ with $p=\iota-l+s=m-l$. Therefore, for $U\in U(U_I)$ one has:
 \[ \dim{[\Ker{(1-U)}\cap W_m]}=n-l-p=n-m\]
and this means that $\Ker{(1-U)}\supset W_m$ for all $m\geq \iota$, in particular for $m=k-1$. 

Therefore the manifold $U(U_I)$ is diffeomorphic with the manifold $\tilde{U}(U_I)\subset U(k-1)$  via the canonical  inclusion $U(k-1)\hookrightarrow U(n)$ which takes $U$ to $U\oplus \id_{W_{k-1}}$. But $\tr \wedge^{2k-1}g^{-1}dg$ in $U(k-1)$ is zero.  

The next transgression formula on $U(n)$
\[\boxed{ c_{k-1/2}-S(U_{\{k\}})=d\mathscr{T},}
\]
translates via Theorem \ref{refhomf} when applied to a a section of the trivial bundle $\underline{U(n)}\ra B$ into the following result \cite{Ni3}.
\begin{theorem}[Nicolaescu] Let $B$ be an oriented, compact manifold. For every smooth map $f:B \ra U(n)$ transversal to all the Schubert cells $S(U_I)$, the form $f^*c_{k-1/2}$ and the current $f^{-1}(S(U_{k}))$ are Poincar\'e duals.
\end{theorem}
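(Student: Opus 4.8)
The plan is to realize the statement as a special case of Theorem~\ref{refhomf} combined with Corollary~\ref{cltran}. I would take $P=B\times U(n)\ra B$, the trivial bundle, equipped with the vertical vector field $X=\nabla^V f_A$ for a fixed self-adjoint $A$ with distinct eigenvalues, and flow the closed form $\omega:=c_{k-1/2}\in\Omega^{2k-1}(P)$ along $X$. If $n=\dim B<2k-1$ there is nothing to prove: $f^*c_{k-1/2}$ vanishes for degree reasons and, by transversality, $f^{-1}(S(U_{\{k\}}))=\emptyset$, so assume $n\ge 2k-1$. Since $P$ is trivial and $X$ is independent of the base point, $X$ is locally constant in the sense of Section~\ref{section 3}; by the facts recalled above (from \cite{VD}, \cite{Ni3}) $f_A$ is fiberwise Morse, hence Morse-Bott-Smale, so $X$ is a locally constant Morse-Bott-Smale vertical vector field, automatically complete and proper with finitely many critical points, the fibers $U(n)$ being compact. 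The critical points are the reflections $U_I$, whose stable and unstable manifolds are the Schubert manifolds described above, with $\codim S(U_I)=\dim U(U_I)=N_I=\sum_{i\in I}(2i-1)$. By hypothesis $f:B\ra U(n)$, viewed as a section of $P$, is transversal to every $S(U_I)$, i.e. s-normal with respect to $X$. All the hypotheses of Theorem~\ref{refhomf} are thus in place (those of Propositions~\ref{essprop1} and \ref{essprop2} as well, the first trivially).

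Theorem~\ref{refhomf} then gives
\[ \lim_{t\ra\infty}\phi_t^*\omega=\sum_{N_I\le 2k-1}\Res^u_{U_I}(\omega)\,[f^{-1}(S(U_I))]. \]
Now each $U_I$ is a single point, so $\tau_{U_I}$ is a constant map and $\Res^u_{U_I}(\omega)=\tau_{U_I}^*\int_{U(U_I)}\omega$ vanishes whenever the fiber integral $\int_{U(U_I)}\omega$ has positive degree, that is, whenever $N_I<2k-1$. Hence only the critical points with $N_I=2k-1$ can contribute, and for those $\Res^u_{U_I}(\omega)$ is the \emph{number} $\int_{U(U_I)}\omega$. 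Among the $U_I$ with $N_I=2k-1$ one has $\int_{U(U_{\{k\}})}c_{k-1/2}=\int_{\mathscr{U}_k}c_{k-1/2}=1$ by \eqref{intck}. For any other such $I$: if $k\in I$ then the summand $2k-1$ alone exhausts $N_I$, forcing $I=\{k\}$, so necessarily $\max I\le k-1$; the incidence relations defining $U(U_I)$ then force $\Ker(1-U)\supseteq W_{k-1}$ for every $U\in U(U_I)$ (the argument preceding \eqref{intck}), so $U(U_I)$ sits inside the image of the inclusion $U(k-1)\hra U(n)$, $U\mapsto U\oplus\id_{W_{k-1}}$, on which $\tr\wedge^{2k-1}g^{-1}dg$ — hence $c_{k-1/2}$ — vanishes identically, $U(k-1)$ carrying no nonzero bi-invariant $(2k-1)$-form. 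Thus $\int_{U(U_I)}\omega=0$ for all $I\ne\{k\}$, and
\[ \lim_{t\ra\infty}\phi_t^*c_{k-1/2}=[f^{-1}(S(U_{\{k\}}))]. \]

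Finally, since $c_{k-1/2}$ is closed, Corollary~\ref{cltran} gives $\lim_{t\ra\infty}\phi_t^*c_{k-1/2}-f^*c_{k-1/2}=(-1)^{|\omega|}d[\eT_\infty(c_{k-1/2})]$, whence
\[ f^*c_{k-1/2}-[f^{-1}(S(U_{\{k\}}))]=d\mathscr{T} \]
for a current $\mathscr{T}$ with $L^1_{\loc}$ coefficients. As currents compute de Rham cohomology and the closed, locally flat current $[f^{-1}(S(U_{\{k\}}))]$ represents the Poincar\'e dual of the cycle it carries, this is exactly the assertion that $f^*c_{k-1/2}$ and $f^{-1}(S(U_{\{k\}}))$ are Poincar\'e duals.

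The analytic heart of the matter is already encapsulated in Theorem~\ref{refhomf}; within the proof of the present statement the only point genuinely requiring attention is that \emph{exactly one term of the sum survives}. This is handled in two steps: discarding the lower-codimension critical points (possible precisely because they are points, so their residue forms have positive degree) and discarding the remaining critical points of codimension $2k-1$ other than $U_{\{k\}}$ (via the $U(k-1)$-embedding). Together with the single residue computation \eqref{intck} carried out in Appendix~\ref{resU}, this is what pins down the normalizing constant of $c_{k-1/2}$ and makes the duality come out on the nose.
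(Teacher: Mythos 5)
Your proposal is, in substance, the paper's own proof: view $f$ as a section of the trivial bundle $B\times U(n)$ with the locally constant Morse--Bott--Smale vertical field generated by $f_A$, apply Theorem \ref{refhomf} together with Corollary \ref{cltran} to the closed form $c_{k-1/2}$, discard the critical points with $N_I<2k-1$ because their residues are positive-degree forms on points, discard those with $N_I=2k-1$, $I\ne\{k\}$ via the embedding $U(k-1)\hra U(n)$, $U\mto U\oplus\id_{W_{k-1}}$, and invoke the residue computation (\ref{intck}) for the surviving term; the handling of the trivial case $\dim B<2k-1$ is a nice extra. The one flawed point is your stated reason for the vanishing on $U(k-1)$: it is \emph{not} true that $U(k-1)$ carries no nonzero bi-invariant $(2k-1)$-form. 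The bi-invariant forms on $U(m)$ form an exterior algebra on generators of degrees $1,3,\dots,2m-1$, so already for $k=5$ the group $U(4)$ carries the nonzero bi-invariant $9$-form $\tr(g^{-1}dg)\wedge\tr(\wedge^{3}g^{-1}dg)\wedge\tr(\wedge^{5}g^{-1}dg)$. What is true, and what the paper actually uses, is that the particular \emph{primitive} form $\tr\wedge^{2k-1}g^{-1}dg$ vanishes identically on $U(k-1)$: at the identity it is (up to a constant) $(X_1,\dots,X_{2k-1})\mto \tr S_{2k-1}(X_1,\dots,X_{2k-1})$, the trace of the standard polynomial, and $S_p\equiv 0$ on $(k-1)\times(k-1)$ matrices for every $p\geq 2(k-1)$ by the Amitsur--Levitzki theorem (the recursion $S_p=\sum_i(-1)^{i-1}X_i\,S_{p-1}$ propagates the vanishing upward from $p=2(k-1)$), and left-invariance then gives vanishing at every point. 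With that substitution your argument is complete and coincides with the paper's derivation of Nicolaescu's theorem.
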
 

 The original proof in \cite{Ni3} used the  theory of analytic currents of Hardt \cite{Ha}.


 
  

 \section{\bf{Superconnections and their Chern character forms}}\label{SCcf}
 
 Superconnections were introduced by Quillen in his celebrated article \cite{Qu0}. In that  same  article, the author makes the case for a currential transgression formula for the Chern character form of a superconnection, although he lives open a precise statement. The theme appears again in another famous work \cite{MQ}, this time  the focus being on the Thom form of a vector bundle. To the best of our knowledge it was Getzler  (\cite{Ge}, Theorem 2.1) who first stated and proved a theorem about the weak convergence of a $1$-parameter family of Thom forms which are built in the spirit of the Mathai-Quillen formalism.  On the other hand, it was probably clear for Harvey and Lawson from their first work on the topic (\cite{HL1})  that their theory should apply to this context as well and in a way, the general results of \cite{HL2} justify that belief. Our purpose in this section is to provide more details about this relation, on one hand as the theory of superconnections is of utmost importance in applications to local index theory, on the other because we wanted to give an answer to Quillen original question. In the next section we will have the occasion to take another look at Getzler's result as well.


We start by reviewing the superconnection formalism but we will be rather sketchy and invite the reader for more  details to the classical \cite{Qu0}, \cite{Qu} and \cite{BGV}.
 
 The most used type\footnote{by no means the only one} of  superconnection on a complex vector bundle  $E\ra B$ is an operator of type $\nabla +A$ where $\nabla$ is connection on $E$ and $A \in \End(E)$. This (differential) operator acts on the space of sections $\Gamma(\Lambda^* T^*M\otimes E)$ by extending the action of $\nabla$  via Leibniz rule and by letting $A$ act only on the second component. 
 
 The word "super" is related to the fact that one assumes in general that $E$  comes with a $\bZ_2$-grading which $\nabla$ respects and $A$ anti-commutes with the involution $\epsilon$ that induces this $\bZ_2$-grading, i.e. $A$ is odd. Then $\bA:=\nabla+A$ is an \emph{odd} operator acting on the sections of the \emph{super} vector space $\Gamma(\Lambda^* T^*M\otimes E)$. The curvature $F(\bA):=\bA^2$ has the same pleasant property as $F(\nabla)$, namely that it is a $0$-order differential operator, i.e. a section of $\Lambda^* T^*M\otimes \End(E)$, an even section to be precise. The same can be said about $e^{F(\bA)}$.  
 
 One has a relevant notion of trace for elements of $\Lambda^* T^*M\otimes \End(E)$ which is called the supertrace:
 \[ \str^+:\Lambda^* T^*M\otimes \End(E)\ra \Lambda^*T^*M,\quad\quad \str^+(\omega\otimes B)=\omega\cdot \tr(\epsilon B).
 \]
 The supertrace is itself an \emph{even} operator  and therefore applied to $e^{F(\bA)}$ will return an even form called the Chern character form of the super connection:
 \[ \ch^+(\bA):=\str^+(e^{F(\bA)})\in\Gamma(\Lambda^{\even}T^*M).
 \]
 One checks that $\ch^+(\bA)$ is a closed form as a consequence of Bianchi's differential identity. 
 
 \begin{remark} If one wants the topological Chern character then one should use $\frac{i}{2\pi}F(\bA)$ instead of $F(\bA)$.
 \end{remark}
 
 Thinking of odd $K$-theory, it is desirable to be able to produce odd forms as well. For that end, one forgets about the grading $\epsilon$ to begin with. Instead one adjoins an \emph{odd} element $\sigma$ which satisfies $\sigma^2=1$. The way to do that is to consider the superalgebra 
 \[ \End(E)[\sigma]:=\End(E)\otimes \bC[\sigma]=\End(E)\oplus\End(E)\sigma.\]
The decomposition is into even and odd elements. 
\begin{remark}
 One can realize $\sigma$ as the involution that flips the factors in $F:=E\oplus E$ and then one has 
 \[\End(E)\otimes \bC[\sigma]\simeq \End_{\bC[\sigma]}(F),\]
  where on the right hand side one considers endomorphisms that   commute with $\sigma$. This becomes an isomorphism of superalgebras ($\End(E)$ sits entirely in even degree) as long as one takes the grading on $F$ to be the one induced by the involution of the decomposition $F=E\oplus E$ (and not by $\sigma$!!!). The elements of $\End_{\bC[\sigma]}(F)$ decompose uniquely as a sum of an even and an odd element:
 \[ \left(\begin{array}{cc} A&0\\
  0& A \end{array}\right)+ \left(\begin{array}{cc} B&0\\
  0& B \end{array}\right)\sigma=\left(\begin{array}{cc} A& B\\ B&A\end{array}\right).\]
  \end{remark}
  The relevant supertrace now is: 
  \[ \str^-: \End(E)[\sigma]\ra \bC,\quad\quad\str(A+B\sigma)=\tr{B}
  \]
   and this extends naturally to an \emph{odd} operator:
   \[ \str^-:\Lambda^*T^*M\hat\otimes\End(E)[\sigma]\ra \Lambda^*T^*M,\quad\quad \str(\omega\otimes A+\eta\otimes B\sigma)=\eta\cdot \tr{B}.
   \]
   We have denoted by $\hat{\otimes}$   the super tensor product of superalgebras. Now, for every endomorphism $A\in\End(E)$, $\bA:=\nabla+A\sigma$ can be realized as a super connection in the sense above. It acts on $\Gamma(\Lambda^*T^*M\otimes F)$ as the operator:
   \[ \left(\begin{array}{cc} \nabla& 0\\
     0 & \nabla\end{array}\right)+\left(\begin{array}{cc} 0 & A \\
      A& 0\end{array}\right).
   \] 
   In any case, one has $F(\bA)=F(\nabla)+ A^2+[\nabla, A]\sigma$, an even element of $\Gamma(\Lambda^*T^*M\hat{\otimes}\End(E)[\sigma])$ and $e^{F(\bA)}$ is still even. However, since the supertrace in this case is an odd operator: 
   \[ \ch^-(\bA):=\str^-(e^{F(\bA)})\in\Gamma(\Lambda^{\odd}T^*M).
   \]
   
  In order to apply Theorem \ref{refhomf} one proceeds as follows.  Assume first that $E$ is endowed with a hermitian metric and that the connection $\nabla$ is compatible with the metric.  The fiber bundle $\pi:P \ra B$ to which we apply  the theory satisfies: 
  \begin{itemize}
  \item[(a)] in the even case, where $E=E^{+}\oplus E^{-}$, the fiber at $b$ is $\Gr_m(E^+_b\oplus E^-_b)$ where $m=\dim{E^+_b}$;
   \item[(b)] in the odd case, where there is no grading, the fiber at $b$ is either the Grassmannian of Hermitian Lagrangians $\Lag(E_b\oplus E_b)$ \emph{or}  $U(E_b)$;  recall that by Arnold Theorem \cite{A}  the two spaces are diffeomorphic (see also \cite{Ci} or \cite{Ni3}).
  \end{itemize}
  \begin{remark}\label{evun} In the even case one can alternatively choose the fiber to be a certain connected component of the space of unitary operators $g\in U(E_b)$ inverted by the grading involution $\epsilon$, meaning that $(\epsilon g)^2=1$. It is not hard to see that this space of unitary operators is diffeomorphic with the full Grassmannian $\cup_{k=0}^{m+l}\Gr_k(E_b)$. This is Quillen's idea in \cite{Qu}.
  \end{remark}
  
  The flow on $P\ra B$ will be the vertical gradient flow of the function $f(L)=\Real\Tr \epsilon P_L$ in both cases where $P_L$ is the orthogonal projection and where $\epsilon$ is the obvious involution of $E\oplus E$ in the odd case. Alternatively, in the odd case, one considers the flow $f(U)=\Real\Tr (U)$ already discussed in Section \ref{occMs}. These flows are really compactifications of the linear flows $(t,A)\ra tA$. The one on $\Gr_m(E^+,E^-)$  is the main object of investigation in \cite{HL2}. We recall its structure.
  
  The critical manifolds are $\bigcup_{k=0}^m F_k$ where:
  \begin{equation}\label{critgr} F_k:=\{L\in \Gr_m(E^+\oplus E^-)~|~\dim(L\cap (E^+\oplus 0))=k,\;\;\dim(L\cap  (0\oplus E^-))=m-k\}
  \end{equation}
 \[\qquad\qquad\simeq  \Gr_k(E^+)\times \Gr_{m-k} (E^-).\]
 
 The stable and unstable manifolds are:
 \[\Sigma_k:=\{L\in \Gr_m(E^+\oplus E^-)~|~\dim{L\cap (E^+\oplus 0)}=k\}
 \]
 \[\Upsilon_k:=\{L\in \Gr_m(E^+\oplus E^-)~|~\dim{L\cap (0\oplus E^-)}=m-k\}
 \]
  
  Part of the datum involved in the construction of the Chern character form is an endomorphism $A\in\End(E)$. We will consider $A$ to be \emph{self-adjoint} at every point and the section $\phi_0$ of $P\ra B$ will be the graph of $A$. To be more precise,
  \begin{itemize}
  \item[(i)] in the odd case, the graph of a self-adjoint operator is clearly a lagrangian subspace of $E\oplus E$, i.e. $J\Gamma_A=(\Gamma_A)^{\perp}$, where $J=\left(\begin{array}{cc} 0 &1 \\ -1 &0\end{array}\right)$; in the unitary group picture the section $\phi_0$ is the Cayley transform of $A$ at $-1$; i.e $\phi_0(b)= \frac{A_b-i}{A_b+i}$.
  \item[(ii)] in the even case, since $A_b$ is odd and self-adjoint it has the block decomposition $A=\left(\begin{array}{cc} 0 & \tilde{A}_b^*\\ \tilde{A}_b& 0\end{array}\right)$; the section will be $\phi_0(b)=\Gamma_{\tilde{A}_b}\in\Gr_k(E^+\oplus E^-)$. 
  \end{itemize}
  
  Finally the form to be flown is constructed as follows.  Let $\pi^*E$ and $\pi^*\nabla$ be the pull-back bundle with connection over $P$. Along the dense open subset of $P$
   \[ \Hom(E^+,E^-)\subset\Gr_k(E^+\oplus E^-) \;\; \mbox{ in the even case, or}\] 
    \[ \Sym(E):=\{A:E\ra E~|~A=A^*\}\subset\Lag(E\oplus E)\;\; \mbox{ in the odd case}\]  there exists a  tautological section $s^{\tau}$ of the vector bundle $\;$ $\Hom(\pi^*E^+,\pi^*E^-)$ and $\Sym(\pi^*E)$, respectively. One can use the superconnection $\pi^*\nabla +s^{\tau}$ (or $\pi^*\nabla +s^{\tau}\sigma$ in the odd case) to build Chern character forms. However that is not good enough as these forms are defined a priori only along the open dense set of $P$ where $s^{\tau}$ makes sense.
    
     First, let us observe  that $s^{\tau}$ makes sense on the whole $P$, but as a section of $\Gr(\pi^*E^+,\pi^*E^-)$ or  $U(\pi^*E)$.  Now Quillen  comes to our rescue again. In \cite{Qu},  he shows that the Chern character forms make sense for such sections and therefore extend to the whole $P$. In fact, using the precise relation between the resolvent and the exponential of a linear operator that the Laplace transform provides, Quillen shows that if one substitutes $A$ by the Cayley transform of a unitary endomorphism $U$ in the formula of the Chern character form above, the resulting form still makes sense for any $U$. This is the odd picture.
     
      In the even case, Remark \ref{evun} allows us to apply essentially the same ideas to the Grassmannian sections as well. To be more precise, an odd, self-adjoint endomorphism $A\in \Sym(E^+\oplus E^-)$ splits as  $A=\left(\begin{array}{cc} 0 & \tilde{A}^*\\ \tilde{A}&0 \end{array}\right)$. The Cayley transform of $A$ is a unitary endomorphism inverted by $\epsilon$.  The correspondence one gets with the Grassmannian is via the map $A\leftrightarrow \Gamma_{\tilde{A}}\in \Gr_m(E^+\oplus E^-)$. The flow line $tA$ ends up when $t\ra \infty$ at the critical point (see (\ref{critgr})):
      \[ L:=\Ker{\tilde{A}}\oplus (\Ker{\tilde{A}^*})^{\perp}=\Ker{\tilde{A}}\oplus \Imag \tilde{A}.
      \]
      Notice that if $\dim{\Ker{\tilde{A}}}=k$ then $\dim{\Ker{A}}=2k-(m-l)$ where $m=\dim{E^+}$ and $l=\dim{E^-}$. We call $m-l=:\ind{E}$ the index of $E$.

   In summary, Quillen's Theorem 1 from \cite{Qu} implies that there exist a \emph{global} form on $P$ whose restriction to the open dense set described in the previous paragraph coincides with the Chern character form associated to $\pi^*\nabla+s^{\tau}$. Notice the crucial fact that $s:B\ra P$ pulls-back this Chern character form to the Chern character form on $B$ built from the original $\bA:=\nabla+A$.  This is  due to the naturality of $\ch$, i.e. if $f:B_1\ra B$ is a smooth map and $(E,\nabla, A)$ is a triple as above on $B$ then:
   \[ f^*\ch(E,\nabla, A)=\ch(f^*E,f^*\nabla, f^*A).
   \]  
   
   We let $\bA_t:=\nabla +tA$ in the even case and $\bA_t:=\nabla+tA\sigma$ in the odd case.
   
  The  question Quillen originally asked in \cite{Qu0} (page 92) was under what  conditions the limit $\displaystyle{\lim_{t\ra  \infty}\str^{\pm}e^{F(\bA_t)}}$ exists as currents.  He proved in the same article that the current has to be supported in the set of points corresponding to operators that have kernel. Theorem \ref{refhomf} gives the complete answer.
  
  \begin{theorem} \label{ansQu} Let $E\ra B$ be a hermitian  bundle endowed with compatible connection $\nabla$. Let $A\in\Sym(E)$ be a  self-adjoint endomorphism. If $E$ is $\bZ_2$-graded, suppose moreover that the $\bZ_2$-grading involution is parallel with respect to $\nabla$ and that $A$ is odd. Let $\ch(A_t)$ be the Chern character form associated to the triple $(E,\nabla, tA)$. Suppose that $A$ is s-normal. This means:
  \begin{itemize}
  \item[(a)] in the odd case, it is transversal to  $S(k):=\{T\in\Sym(E)~|~\dim\Ker{T_b}=k,\; \forall b\}$. 
  \item[(b)] in the even case, it is transversal to $\Sigma_k:=\left\{T=\left( \begin{array}{cc} 0 & \tilde{T}^*\\ \tilde{T}&0 \end{array}\right)~|~\dim{\Ker{\tilde{T}_b}}=k, \; \forall b\right\}$. 
  \end{itemize} Then  
  \begin{itemize}
  \item[(a)] in the ungraded case:
  \[ \displaystyle\lim_{t\ra \infty}\ch(A_t)=\sum_{k\geq 1} \Res_k^-\cdot[A^{-1}(S(k))]\]
  where $\Res_k$ are forms on $A^{-1}(S(k))$ described below;
  
 \item[(b)] in the $\bZ_2$-graded case:
 \[  \displaystyle\lim_{t\ra \infty}\ch(A_t)=\sum_{k\geq 1} \Res_{k}^+\cdot[A^{-1}(\Sigma_k)],
 \]
 where $ \Res_{k,}^+$ are forms on $A^{-1}(\Sigma_k)$ described below.
   \end{itemize}  
  \end{theorem}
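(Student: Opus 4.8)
The plan is to deduce Theorem \ref{ansQu} from the refined homotopy formula (Theorem \ref{refhomf}), applied to the fiber bundle, vertical flow, section and form set up in the paragraphs preceding the statement, and then to translate the result back to forms on $B$ using naturality of the Chern character.

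First I would check that the four pieces of data meet the hypotheses of Theorem \ref{refhomf}. The bundle $\pi\colon P\ra B$ is the bundle of Grassmannians $\Gr_m(E^+\oplus E^-)$ in the $\bZ_2$-graded case and the bundle $U(E)$ (equivalently $\Lag(E\oplus E)$, by Arnold's theorem \cite{A}) in the ungraded case; in both cases the fiber is compact, so $f$ is automatically bounded above and $\omega,d\omega$ are vertically bounded. The Hermitian metric on $E$ induces a vertical metric, and $X$ is the vertical gradient of $f(L)=\Real\Tr(\epsilon P_L)$ (resp.\ $f(U)=\Real\Tr U$); this is Morse-Bott-Smale in each fiber by the analysis recalled in Section \ref{occMs} and in \cite{VD}, and it is locally constant in the sense of Section \ref{section 3}: a local Hermitian orthonormal frame for $E$ trivializes $P|_U\cong U\times M$ and carries $X$ to $(u,p)\mapsto(u,\tilde X(p))$, because the flow depends only on the Hermitian structure of $E$, not on $\nabla$. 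This is precisely the situation of Example \ref{univmodel}, with structure group $U(m)\times U(l)$ (resp.\ $U(n)$). The section $\phi_0$ is the graph of $A$, namely $\Gamma_{\tilde A}\in\Gr_m(E^+\oplus E^-)$ in the graded case and $\Gamma_A\in\Lag(E\oplus E)$, equivalently the Cayley transform $\frac{A-i}{A+i}\in U(E)$, in the ungraded case; the assumption that $A$ is s-normal is exactly s-normality of $\phi_0$, since the graph embedding carries $\{T\in\Sym(E):\dim\Ker T_b=k\}$ (resp.\ the odd self-adjoint endomorphisms of fiberwise corank $k$) onto the open subset of the stable manifold $S(\underline{k})$ (resp.\ $\Sigma_k$) consisting of graphs, and $\phi_0^{-1}(S(\underline{k}))=A^{-1}(S(k))$, $\phi_0^{-1}(\Sigma_k)=A^{-1}(\Sigma_k)$. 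Finally $\omega$ is the \emph{globally defined} form on $P$ furnished by Quillen's Theorem 1 in \cite{Qu}: it is smooth on all of $P$ and restricts, on the dense open stratum where the tautological section $s^\tau$ of $\Hom(\pi^*E^+,\pi^*E^-)$ (resp.\ of $\Sym(\pi^*E)$) is defined, to the Chern character form of the superconnection $\pi^*\nabla+s^\tau$ (resp.\ $\pi^*\nabla+s^\tau\sigma$). The hypothesis that the $\bZ_2$-grading be $\nabla$-parallel is what makes $\pi^*\nabla+s^\tau$ a genuine superconnection and $\omega$ closed; the flow itself does not see it.

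Granting all this, Theorem \ref{refhomf}, applied componentwise in the (inhomogeneous) degree of $\omega$ --- which is harmless, since neither $X$ nor $\phi_0$ depends on the degree --- gives
\[ \lim_{t\ra\infty}\phi_t^*\omega=\sum_{\codim S(F)\le\deg\omega}\tau_F^*\!\left(\int_{U(F)/F}\omega\right)\bigl[\phi_0^{-1}(S(F))\bigr].
\]
To read off the statement I would first identify the left-hand side with $\displaystyle\lim_{t\ra\infty}\ch(A_t)$: the flow $\phi_t$ of $X$ is, on the open stratum $\Hom(\pi^*E^+,\pi^*E^-)$ (resp.\ $\Sym(\pi^*E)$), a reparametrization of the linear flow $\alpha\mapsto t\alpha$, so by naturality of the Chern character $\phi_t^*\omega$ coincides there with $\ch(\nabla+s\,A)=\ch(A_s)$ for a reparametrization $s(t)\ra\infty$, hence with $\ch(A_{s(t)})$ on all of $P$ since both sides are smooth; the limit is thus $\lim_{s\ra\infty}\ch(A_s)$. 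On the right-hand side the critical manifolds are the $F_k\cong\Gr_k(E^+)\times\Gr_{m-k}(E^-)$ (resp.\ the $\underline{k}$), with stable manifolds $\Sigma_k$ (resp.\ $S(\underline{k})$) and $\phi_0^{-1}(S(F_k))=A^{-1}(\Sigma_k)$ (resp.\ $A^{-1}(S(k))$); keeping the indices $k$ for which this stable manifold has codimension $\le\deg\omega$ --- in positive degree exactly the $k\ge1$ of the statement, the minimal stratum contributing, if at all, only the locally constant degree-zero term $\str^{\pm}(1)$ --- one obtains the displayed sums, with $\Res^{\pm}_k:=\tau_{F_k}^*\bigl(\int_{U(F_k)/F_k}\omega\bigr)$. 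Orientations are fixed exactly as in the remark following the proof of Theorem \ref{refhomf}.

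I expect the genuine difficulty --- and the reason the residues are only named here and ``described below'' --- to be the explicit evaluation of $\Res^{\pm}_k=\tau_{F_k}^*\int_{U(F_k)/F_k}\omega$. As in Section \ref{tCc}, since $\omega$ is closed this fiber integral over the unstable fibers of $U(F_k)\ra F_k$ is a closed form on $F_k$ pinned down by a single universal computation over a product of projective spaces; one carries it out with the pointwise curvature formula of Lemma \ref{ftina} (and its real analogue) together with Quillen's explicit off-diagonal Laplace-transform expression for $\omega$. Everything else is formal: in particular, the fiber being compact, one stays entirely within the scope of Theorem \ref{refhomf} and never needs the integrability hypotheses of the non-compact Theorem \ref{nncext}.
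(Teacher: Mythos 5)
Your proposal is correct and follows essentially the same route as the paper, which proves Theorem \ref{ansQu} exactly by feeding the set-up of Section \ref{SCcf} (the Grassmannian, respectively unitary/Lagrangian, bundle, the compactified linear flow, the graph/Cayley section, and Quillen's globally defined extension of the superconnection Chern character form) into Theorem \ref{refhomf}, with the residues deferred to Theorem \ref{ansQu2}; your verification of the hypotheses and the identification $\phi_t^*\omega=\ch(A_{s(t)})$ via naturality match the paper's (largely implicit) argument. The only divergence is your closing guess that the residues are pinned down by a universal projective-space computation in the style of Lemma \ref{ftina}: the paper instead evaluates them in Theorem \ref{ansQu2} using Quillen's Theorem 2, identifying $\Res_k^{\pm}$ with the fiber integral of $\ch(\Pi^*\nabla^{\tau}+s^{\tau})$ over $\Sym(\tau)$, but that concerns the residue description, not the statement at hand.
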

  
  \vspace{0.3cm}
  Let $S^-_k:=A^{-1}(S(k))$ in the odd case  and $S^+_k:=A^{-1}(\Sigma_k)$ in the even case.
  
  In order to describe the forms $\Res_k^{\pm}$, let us notice that  over $S^{\pm}_k$ there exists a tautological hermitian vector bundle $\tau$ (which is $\bZ_2$-graded in the even case) with compatible connection $\nabla^{\tau}$. The fiber at a point $b\in S^{\pm}_k$  is $\Ker{A_b}\subset E_b$.  In the even case, $\dim{\Ker {A}_b}=2k-\ind_{E}$ and $\Ker{A}_b$ splits into $\Ker{\tilde{A}}_b\oplus\Ker{\tilde{A}^*_b}$. The connection $\nabla^{\tau}$ is the orthogonal projection of the connection $\nabla$ on $E$ onto $\tau$. 
  
 Let us denote by $\Pi:\Sym(\tau)\ra S^{\pm}_k$ the vector bundle of self-adjoint operators on $\tau$, which are odd in the $\bZ_2$ graded case. The bundle $\Pi^*\Sym(\tau)\ra \Sym(\tau)$ comes with an obvious tautological section $s^{\tau}$. We can build the Chern character form which lives on $\Sym(\tau)$:
  \[ \ch(\Pi^*\nabla^{\tau}+s^{\tau}).
  \]
  \begin{theorem} \label{ansQu2}
  \[\Res_k^{\pm}= \int_{\Sym(\tau)/S^{\pm}_k} \ch(\Pi^*\nabla^{\tau}+s^{\tau}).
  \]
  \end{theorem}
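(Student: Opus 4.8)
The plan is to insert the globalized Chern character form into Theorem \ref{refhomf} and then recognize the resulting fibre integral over the critical manifold as a tautological Chern character form of the kind appearing in the statement. Write $\widetilde{\ch}$ for the \emph{global} form on $P$ furnished by Quillen's extension \cite{Qu}; recall that $\phi_0$ is the graph of $\widetilde A$ (even case) or the Cayley transform of $A$ (odd case), that the flow is the compactification of the rescaling $t\mapsto tA$, and hence that $\phi_t^{*}\widetilde{\ch}=\ch(A_t)$ by naturality of $\ch$. Let $F_k$ be the critical manifold detected by the $k$-th term, namely $F_k$ as in $(\ref{critgr})$ in the even case and its unitary analogue in the odd case; then $\phi_0(b)$ lies in $S(F_k)$ exactly when $A_b$ has the appropriate kernel dimension ($\dim\Ker\widetilde A_b=k$, resp.\ $\dim\Ker A_b=k$), i.e.\ $\phi_0^{-1}(S(F_k))=S_k^{\pm}$, and $\tau_{F_k}\colon S_k^{\pm}\ra F_k$ sends $b$ to $\phi_\infty(b)=\Ker\widetilde A_b\oplus\Imag\widetilde A_b$ (resp.\ to the reflection in $\Ker A_b$). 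Comparing Theorem \ref{refhomf} with the statement of Theorem \ref{ansQu} then forces
\[
\Res_k^{\pm}=\tau_{F_k}^{*}\!\left(\int_{U(F_k)/F_k}\widetilde{\ch}\right),
\]
so everything reduces to computing this fibre integral over $F_k$ — a fibrewise question, for which one may take $B$ to be a point — and then pulling the answer back along $\tau_{F_k}$.

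Second, I would describe $U(F_k)$. Over a point $L=L^{+}\oplus L^{-}\in F_k$ of the Grassmannian bundle, let $\varsigma_k$ be the tautological ``kernel bundle'' with fibre $L^{+}\oplus(L^{-})^{\perp}$ (orthogonal complement inside $E^{-}$), $\bZ_2$-graded by $\varsigma_k^{+}:=L^{+}$, $\varsigma_k^{-}:=(L^{-})^{\perp}$, and carrying the connection $\nabla^{\varsigma_k}$ obtained by orthogonally projecting the pulled-back $\nabla$; in the odd case $\varsigma_k|_{L}:=L$, ungraded. The analysis of the linear flow on the Grassmannian in \cite{HL2} — equivalently, the local model of Section \ref{section 3} — shows that a point flows out of $L$ precisely when it is the graph of an element of the negative normal space of $f$ at $L$, and that this space is canonically $\Hom(\varsigma_k^{+},\varsigma_k^{-})$ in the graded case, resp.\ $\Sym(\varsigma_k)$ in the ungraded one, with the endpoint map $U(F_k)\ra F_k$ the bundle projection. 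Hence $U(F_k)$ is diffeomorphic to the total space of $\Sym(\varsigma_k)\ra F_k$; moreover $\tau_{F_k}^{*}\varsigma_k=\tau$ and $\tau_{F_k}^{*}\nabla^{\varsigma_k}=\nabla^{\tau}$, since $\varsigma_k|_{\phi_\infty(b)}=\Ker A_b=\tau_b$ and the orthogonal projection of $\nabla$ onto $\varsigma_k$ pulls back to the projection onto $\tau$.

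The third and decisive step is to identify $\widetilde{\ch}|_{U(F_k)}$. Over $U(F_k)$ the bundle $\pi^{*}E$ splits orthogonally as $p^{*}\varsigma_k\oplus p^{*}\varsigma_k^{\perp}$, where $p\colon U(F_k)\ra F_k$; since $U(F_k)\ra B$ factors through $p$, the connection $\pi^{*}\nabla$ (and with it the projected connections and second fundamental forms of the splitting) is pulled back from $F_k$, while the superconnection endomorphism $s^{\tau}$ restricts to the honest tautological section $s^{\varsigma_k}$ on $p^{*}\varsigma_k$ and to the constant ``point at infinity'' — the critical reflection determined by $L$ — on $p^{*}\varsigma_k^{\perp}$. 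The key point, which rests on Quillen's resolvent/Cayley-transform description of $\widetilde{\ch}$ in \cite{Qu}, is that the terms of $\widetilde{\ch}$ mixing the tautological block with the block at infinity occur only sandwiched against the resolvent of $s^{\tau}$ on the latter, which vanishes identically there; consequently, additively,
\[
\widetilde{\ch}|_{U(F_k)}=\ch\bigl(p^{*}\nabla^{\varsigma_k}+s^{\varsigma_k}\bigr)+p^{*}\bigl(\text{Chern character datum of the frozen block on }\varsigma_k^{\perp}\bigr).
\]
For $k\ge 1$ the fibre $U(F_k)/F_k=\Sym(\varsigma_k)$ is positive-dimensional, so the pulled-back summand has vanishing fibre integral and $\int_{U(F_k)/F_k}\widetilde{\ch}=\int_{\Sym(\varsigma_k)/F_k}\ch(p^{*}\nabla^{\varsigma_k}+s^{\varsigma_k})$. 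Applying $\tau_{F_k}^{*}$, and using that fibre integration commutes with pullback along the bundle map $\Sym(\tau)\ra\Sym(\varsigma_k)$ covering $\tau_{F_k}$ together with $\tau_{F_k}^{*}(\varsigma_k,\nabla^{\varsigma_k})=(\tau,\nabla^{\tau})$, gives exactly $\int_{\Sym(\tau)/S_k^{\pm}}\ch(\Pi^{*}\nabla^{\tau}+s^{\tau})$, which is the claim.

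I expect the main obstacle to be precisely the exactness asserted in the third step: that the restriction of Quillen's \emph{globalized} Chern character form to the unstable manifold is \emph{literally} the tautological Chern character form of $(\varsigma_k,\nabla^{\varsigma_k},s^{\varsigma_k})$ up to a term pulled back from $F_k$ — an identity of differential forms, not merely of cohomology classes or of currents. Making this rigorous means going through the Cayley/Laplace-transform formulas of \cite{Qu} carefully enough to see the mixed connection terms drop out in the limit at infinity, and checking convergence of the fibre integral over the non-compact fibre $\Sym(\varsigma_k)$ — the same integrability that already underlies the finiteness of $\int_{U(F)/F}$ in Proposition \ref{essprop1}. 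In the $\bZ_2$-graded case all of this is essentially contained in \cite{HL2}; the ungraded case should follow along the same lines after transporting the picture to the unitary group via Arnold's diffeomorphism, with the relevant residue identifications being variants of the computations in Section \ref{occMs} and Appendix \ref{resU}.
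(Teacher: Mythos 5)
Your overall route coincides with the paper's: feed Quillen's globalized Chern character form into Theorem \ref{refhomf}, identify $\phi_0^{-1}(S(F_k))$ with $S_k^{\pm}$ and the endpoint map with $\tau_{F_k}$, and reduce the whole statement to the claim that the restriction of the global form to the unstable bundle $U(F_k)\simeq\Sym(\varsigma_k)$ is the tautological Chern character form of the kernel bundle with its projected connection. The gap is exactly where you say it is: your third step asserts an additive splitting of $\widetilde{\ch}\bigr|_{U(F_k)}$ into the tautological term plus a term pulled back from $F_k$, justified only by the heuristic that the mixed blocks are ``sandwiched against a vanishing resolvent,'' and you yourself flag making this rigorous as the main obstacle. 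The paper closes precisely this step by citing Theorem 2 of \cite{Qu}: for an isometric embedding $E'\hookrightarrow E$, $\nabla'$ the orthogonal projection of $\nabla$, and $U'\in U(E')$, one has the \emph{exact} identity of forms $\ch(\nabla',U')=\ch(\nabla,\,U'\oplus\id_{(E')^{\perp}})$. Applied over $U(k)$, where the tautological unitary section splits as $\id\oplus\tilde s^{\tau}$ relative to $\pi^*E=\pi_k^*\tau^{\perp}\oplus\pi_k^*\tau$, this gives $\ch(\pi^*E,\pi^*\nabla,s^{\tau}_u)=\ch(\pi_k^*\tau,\pi_k^*\nabla^{\tau}+\tilde s^{\tau})$ with no remainder pulled back from $F_k$, so your auxiliary argument that a pulled-back term dies under fibre integration is both unproven and unnecessary. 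In short, the missing ingredient is not a new computation to be extracted from the Cayley/Laplace formulas but a quotable theorem of Quillen; without it (or an equivalent worked-out identity of forms) the proof is incomplete.

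Two smaller points. For the $\bZ_2$-graded case you defer to \cite{HL2}; the paper instead reduces the even case to the same unitary-group statement via Remark \ref{evun} (odd self-adjoint endomorphisms correspond, through the Cayley transform, to unitaries inverted by the grading involution), so a single appeal to Quillen's theorem handles both parities. And the convergence of the fibre integral over the non-compact fibre $\Sym(\varsigma_k)$, which you list as a further concern, needs no separate integrability argument here: the fibres of $P\ra B$ are compact, so the global form is bounded on them, and Proposition \ref{essprop1} already gives finite volume of the unstable fibres, which is all that Theorem \ref{refhomf} requires to make sense of the residue.
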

  
  \begin{proof} We prove just the ungraded case, the other one being treated similarly. 
  
    We need the following result from \cite{Qu} (Theorem 2): 
  
  \vspace{0.3cm}
 
 \noindent
{\bf Theorem} (Quillen) Let $E'\hookrightarrow E$ be an isometric embedding of hermitian vector bundles (commuting with $\epsilon$ in the even case) and suppose $U'\in U(E')$ is a unitary endomorphism. Suppose moreover that  $\nabla$ is a connection on $E$ and $\nabla'$ is the connection on $E'$  resulting by orthogonal projection of $\nabla$. Then
 \[\ch(\nabla',U')=\ch(\nabla, U'\oplus \id_{(E')^{\perp}})\footnote{The original statement involves unitary operators $U=U'\oplus -\id_{(E')^{\perp}}$. However, we work with a different Cayley transform throughout.} \]    
  where both sides represent the (extended)  Chern character forms associated to pairs (connection, unitary endomorphism).
  
  \vspace{0.3cm}
  
  The residue to be computed is up to a sign the pull-back (to $A^{-1}(S(k))$) of the form:
  \[\int_{U(k)/ F(k)}\ch(\pi^*E,  \pi^*\nabla, s^{\tau}_u),
  \]
  where $\pi:U(E)\ra B$,  $s^{\tau}_u:U(E)\ra \pi^* U(E)$ is the tautological section, 
   \[U(k)_b:=\{U\in U(E_b)~|~\dim{\Ker{(1-U)}}=n-k\}\quad\]\[ F(k)_b:=\{\id_{L^{\perp}}\oplus -\id_{L}\in U(E_b) \;|\; L\subset  E_b,\; \dim{L}=k \}\quad  \mbox{ and}\]
   \[ \pi_k:U(k)\ra F(k),\qquad U\ra \id_{\Ker{(1-U)}}\oplus -\id_{\Ker{(1-U)}^{\perp}}.
   \]
   
  We identify $F(k)_b$ with the Grassmannian of $k$ subspaces of $E_b$ on which we have a tautological bundle $\tau_b$ and its orthogonal complement $\tau^{\perp}_b$. Now, over $U(k)$, one has 
  \[\pi^*E=\pi_k^*\tau^{\perp}\oplus \pi_k^*\tau.\] 
and the tautological section $s^{\tau}_u\bigr|_{U(k)}$ splits as $s^{\tau}_u=\id\oplus \tilde{s}^{\tau}$ where, by taking its Cayley transform, we think of $\tilde{s}^{\tau}$ as a section of $\pi_k^*{\Sym(\tau)}$. Even better, we can look at $\tilde{s}^{\tau}$ as a section $\tilde{s}^{\tau}:\Sym(\tau)\ra \pi_k^*{\Sym(\tau)}$ since $U(k)\simeq \Sym(\tau)$.

Let $\pi_k^*\nabla^{\tau}$ be the projection of $\pi^*\nabla$ onto $\pi_k^*\tau$. This connection is the same as the pull-back via  $\pi_k$ of the projection of $\nabla$ onto $\tau$. We can now use Quillen's Theorem to conclude that
\[ \ch(\pi^*E, s^{\tau}_u, \pi^*\nabla)=\ch(\pi_k^*\tau, \pi_k^*\nabla^{\tau}+\tilde{s}^{\tau}).
\]
  It is not hard to see now that the pull-back of $\int_{\Sym(\tau)/F(k)}\ch(\pi_k^*\tau, \pi_k^*\nabla^{\tau}+\tilde{s}^{\tau})$ via $\tau_F$ is the same as $ \int_{\Sym(\tau)/S^{\pm}_k} \ch(\Pi^*\nabla^{\tau}+s^{\tau})$.

  \end{proof}

\section{\bf{Thom forms}}\label{sult}

In their celebrated article \cite{MQ}, Mathai and Quillen gave various constructions of equivariant Thom forms by exploiting  the Chern character defect as a ring homomorphism from $K$-theory to singular cohomology. While initially defined for vector bundles with a Spin  (or Spin$^c$) structure (necessary for one to have Thom isomorphism in $K$-theory), these forms make sense on every oriented vector bundle endowed with a Riemannian metric and compatible connection.  The justification of this fact goes through the intricacies of the Weil algebra and equivariant differential forms. A more direct construction of canonical Thom forms on a real vector bundle in the spirit of the superconnection formalism appears in  Getzler's article \cite{Ge}. Before we proceed with his construction, let us just say that Harvey and Lawson gave in \cite{HL1} a recipe for constructing myriads of such forms using as  building blocks   triples made from of a connection, a section and a mode of approximation. Since they already discuss their examples at length in various places we chose Getzler's Theorem 2.1 in the above mentioned article, as an application of the non-compact extension Theorem \ref{nncext}.
  
  Let $\pi:E\ra B$ be an oriented, Riemannian vector bundle endowed with a metric compatible connection $\nabla$.  The curvature $F(\nabla)\in \Gamma(\Lambda^2T^*B\otimes \mathfrak{so}(E))$ can be seen as a two form with values in $\Lambda^2E$ by using the canonical bundle isomorphism:
  \[ \mathfrak{so}(E_b)\simeq \Lambda^2E_b,\qquad A\ra \sum_{1\leq i<j\leq n}\langle e_i,Ae_j\rangle e_i\wedge e_j,
  \]
  induced by an orthonormal basis $\{e_1,\ldots,e_n\}$ of $E_b$.
 
 One pulls back $\nabla$ to $\pi^*E$. The bundle $\pi^*E\ra E$ has a natural tautological section $x$. Define the following element of $\Gamma(E;\Lambda^*T^*E\otimes\Lambda^*\pi^*E)$ for every $t\geq 0$:
 \[\omega_t:=\frac{t^2}{2}|x|^2+t\pi^*\nabla(x)+\pi^*F(\nabla).
 \]
 This is the analog of the curvature of a superconnection. In order to get "honest" forms on $E$ one needs a trace, which in this context is represented by  the Berezin integral. The metric and the orientation of $E$ give a bundle isomorphism:
 \begin{equation}\label{berint} \Lambda^{\max}E\simeq \underline{\bR}.
 \end{equation}
 Berezin integral  is the bundle morphism $\mathscr{B}:\Lambda^*E\ra \underline{\bR}$ equal to the  isomorphism (\ref{berint}) on $\Lambda^{\max}E$ and with $0$ everywhere else. It lifts to a bundle morphism $\Lambda^*\pi^*E\ra \underline{\bR}$. Now, assume $E$ is of even rank and define the following  form on $E$:
 \[ \mu_t:=(-1)^{n(n-1)/2}(2\pi)^{-n/2} \mathscr{B}(e^{-\omega_t}).
 \]
\begin{lemma} The form $\mu_t\in\Gamma(E;\Lambda^{n}T^*E)$ is closed.
\end{lemma}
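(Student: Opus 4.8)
The plan is to prove $d\mu_t=0$ by the Mathai--Quillen argument: push the exterior derivative on the total space $E$ through the Berezin integral onto the covariant exterior derivative of $\Lambda^*\pi^*E$-valued forms, show that $e^{-\omega_t}$ is annihilated by a twisted differential of the form ``$\nabla^\Lambda+(\text{const})\,\iota_x$'', and finish with the fact that $\mathscr{B}$ kills everything in the image of interior multiplication by the tautological section. Write $d_E$ for the exterior derivative on $E$.

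\emph{Step 1 (Berezin integral intertwines $d_E$ and the covariant derivative).} Let $\nabla^\Lambda$ be the connection induced by $\pi^*\nabla$ on the bundle of exterior algebras $\Lambda^*\pi^*E\to E$, extended to a covariant exterior derivative on $\Gamma(E;\Lambda^*T^*E\otimes\Lambda^*\pi^*E)$ by the Leibniz rule. Since $\nabla$ is metric compatible and $E$ is oriented, $\nabla^\Lambda$ preserves the $\Lambda$-degree and restricts to the trivial connection on $\Lambda^{\max}\pi^*E\cong\underline{\bR}$, the unit section coming from (\ref{berint}) being parallel. Hence the $\Lambda^{\max}$-component of $\nabla^\Lambda\alpha$ is $\nabla^\Lambda$ of the $\Lambda^{\max}$-component of $\alpha$, which translates into
\[ d_E\bigl(\mathscr{B}(\alpha)\bigr)=\mathscr{B}\bigl(\nabla^\Lambda\alpha\bigr),\qquad \alpha\in\Gamma(E;\Lambda^*T^*E\otimes\Lambda^*\pi^*E). \]
Moreover, if $\iota_x$ denotes interior multiplication (via the metric) by the tautological section $x\in\Gamma(\Lambda^1\pi^*E)$, acting as an odd derivation on the $\Lambda^*\pi^*E$-factor, then $\iota_x$ strictly lowers $\Lambda$-degree, so $\mathscr{B}\circ\iota_x=0$.

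\emph{Step 2 (equivariant closedness of $\omega_t$).} I would next check the identity
\[ \nabla^\Lambda\omega_t=t\,\iota_x\,\omega_t. \]
Computing term by term in $\omega_t=\tfrac{t^2}{2}|x|^2+t\,\pi^*\nabla(x)+\pi^*F(\nabla)$: first $\nabla^\Lambda\bigl(\tfrac{t^2}{2}|x|^2\bigr)=\tfrac{t^2}{2}\,d_E|x|^2=t^2\langle \pi^*\nabla(x),x\rangle=t^2\,\iota_x\bigl(\pi^*\nabla(x)\bigr)$, using metric compatibility; second $\nabla^\Lambda\bigl(t\,\pi^*\nabla(x)\bigr)=t\,(\nabla^\Lambda)^2 x=t\,\iota_x\bigl(\pi^*F(\nabla)\bigr)$, because the curvature of $\nabla^\Lambda$ on $\pi^*E$ is $\pi^*F(\nabla)$, whose action on a vector $v$ is $\iota_v(\,\cdot\,)$ under the identification $\mathfrak{so}(E)\cong\Lambda^2E$ fixed above; and $\nabla^\Lambda\bigl(\pi^*F(\nabla)\bigr)=0$ by the second Bianchi identity. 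Adding and using $\iota_x\bigl(\tfrac{t^2}{2}|x|^2\bigr)=0$ gives the identity. \textbf{This step is the main obstacle}: it requires keeping track of the Koszul signs in the graded tensor product $\Lambda^*T^*E\otimes\Lambda^*\pi^*E$, and of the sign conventions for the isomorphism $\mathfrak{so}(E)\cong\Lambda^2E$ and for the contraction $\iota_x$; these are precisely the conventions (implicit in the statement) for which the displayed identity holds, up to an immaterial sign of the constant.

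\emph{Step 3 (conclusion).} Since $\omega_t$ has even total degree, both $\nabla^\Lambda$ and $\iota_x$ act on $e^{-\omega_t}=\sum_{k\ge 0}\tfrac{(-1)^k}{k!}\,\omega_t^{\wedge k}$ as odd derivations passing through the even element $\omega_t$, so $\nabla^\Lambda e^{-\omega_t}=-(\nabla^\Lambda\omega_t)\wedge e^{-\omega_t}$ and $\iota_x e^{-\omega_t}=-(\iota_x\omega_t)\wedge e^{-\omega_t}$. Combined with Step 2 this yields $\nabla^\Lambda e^{-\omega_t}=-t\,(\iota_x\omega_t)\wedge e^{-\omega_t}=t\,\iota_x\bigl(e^{-\omega_t}\bigr)$. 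Therefore, by Step 1,
\[ d_E\mu_t=(-1)^{n(n-1)/2}(2\pi)^{-n/2}\,\mathscr{B}\bigl(\nabla^\Lambda e^{-\omega_t}\bigr)=(-1)^{n(n-1)/2}(2\pi)^{-n/2}\,t\,\mathscr{B}\bigl(\iota_x e^{-\omega_t}\bigr)=0, \]
which is the claim. Conceptually this is just the statement that, up to the normalizing constant, $\mu_t$ is the Chern character form of the superconnection $\pi^*\nabla^S+t\,c(x)$ on a local spinor bundle of $E$, which is closed by Bianchi as in Section \ref{SCcf}; the Berezin-integral computation above makes this work with no $\mathrm{Spin}$ assumption on $E$.
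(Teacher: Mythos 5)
Your proof is correct: the identity $d_E\circ\mathscr{B}=\mathscr{B}\circ\nabla^\Lambda$, the vanishing $\mathscr{B}\circ\iota_x=0$, and the equivariant-closedness relation $\nabla^\Lambda\omega_t=\pm t\,\iota_x\omega_t$ (the sign depending only on the convention for $\mathfrak{so}(E)\cong\Lambda^2E$, which is immaterial for the conclusion) together give $d_E\mu_t=0$ exactly as you argue. The paper offers no argument of its own — it simply cites Proposition 1.3(1) of Getzler's article — and your computation is precisely the standard Mathai--Quillen/Getzler proof given there, so this is essentially the same approach, rendered self-contained.
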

\begin{proof} See Proposition 1.3 (1) in \cite{Ge}. 
\end{proof}
\begin{remark} Notice that for $t=0$ one gets the Pfaffian of the connection $\pi^*\nabla$.
\end{remark}

The vertical flow that we consider on $E\ra B$ is the radial flow $v\ra tv$ induced by the gradient  of fiberwise Morse function $f(v)=\frac{1}{2}|v|^2$. There is one stable manifold in the story and that is the zero section and one unstable manifold and that is the entire $E$. There is only one residue  to compute and this is 
\[ \int_{E/B} \mu_t
\]
which one can see that is in fact the computation of a Gaussian in every fiber as follows from Proposition 1.3(2) in \cite{Ge}. Hence $\mu_t$ is a Thom form for every $t$. 

Let $s:B\ra E$ be a section transversal to the zero section. Theorem \ref{nncext} gives  the  Chern-Gauss-Bonnet Theorem again, in the form of Theorem 2.1 in \cite{Ge}.
\begin{theorem}[Getzler] The forms $s^*\mu_t$ converge as $t\ra \infty$ to a current of the form:
\[ \lim_{t\ra \infty} s^*\mu_t=[s^{-1}(0)].
\]
\end{theorem}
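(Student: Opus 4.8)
The plan is to read this off from the non-compact homotopy formula, Theorem \ref{nncext}, applied to the \emph{fixed} closed form $\mu_1$ and to the radial vertical gradient flow. First I would set up the data so that the hypotheses of Theorem \ref{nncext} are met. Take $P=E$ with the metric $g$ built from the bundle metric and $\nabla$, so that each fiber is a complete Euclidean space; the vertical gradient of $f(v)=\tfrac12|v|^2$ is the radial Euler field, whose fiberwise flow $\Phi(\sigma,v)=e^{\sigma}v$ is complete, $f$ is proper, and its only critical manifold is the zero section $F_0\cong B$, a nondegenerate fiberwise minimum — so the Morse–Bott–Smale condition and the finiteness of the critical set are automatic, and local constancy is clear since in a local trivialization the radial field is independent of the base point. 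Here $S(F_0)$ is the zero section, with $\codim S(F_0)=\rank E=\deg\mu_1$, and $U(F_0)=E$ with the bundle projection $U(F_0)\to F_0$ equal to $\pi$. Any section is transversal to $S(F_0)$, in particular $s$; and $s\pitchfork 0$ makes $s^{-1}(0)=\phi_0^{-1}(S(F_0))$ a codimension-$(\rank E)$ submanifold of $B$, with $\tau_{F_0}$ the inclusion composed with $\pi$.

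Second, I would record the scaling identity that converts the $t$-family into the flow. Writing $\rho_t(v)=tv=\Phi(\ln t,v)$, one checks directly from $\omega_t=\tfrac{t^2}{2}|x|^2+t\,\pi^*\nabla(x)+\pi^*F(\nabla)$ that $\rho_t^*x=tx$ and hence $\rho_t^*\omega_1=\omega_t$; since the Berezin integral and the normalizing constants are $t$-independent, $\rho_t^*\mu_1=\mu_t$. Because $\rho_t\circ s=\Phi_{\ln t}\circ\phi_0=\phi_{\ln t}$ is the flowout of $s$ at time $\ln t$, this gives $s^*\mu_t=\phi_{\ln t}^*\mu_1$, so that
\[ \lim_{t\to\infty}s^*\mu_t=\lim_{\sigma\to\infty}\phi_\sigma^*\mu_1, \]
which is exactly the left-hand side in Theorem \ref{nncext} for $\omega=\mu_1$.

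Third — and this is the analytic heart of the argument, where essentially all the work lies — I would verify the remaining hypotheses of Theorem \ref{nncext}: that $(g,\mu_1,\nabla^V f,s)$ is a strongly atomic tuple on the positive semi-axis (Definition \ref{sattup}) and that $\mu_1$ satisfies the fiberwise integrability bound (\ref{eq2snv}). Since $d\mu_1=0$, only the conditions on $\mu_1$ itself are at issue. After carrying out the Berezin integral one has $\mu_1=(-1)^{n(n-1)/2}(2\pi)^{-n/2}\,e^{-\frac12|x|^2}\,\mathscr{B}(e^{-\pi^*\nabla(x)-\pi^*F(\nabla)})$, whose coefficients are polynomial in $x$ times the Gaussian $e^{-\frac12|x|^2}$; hence locally in $B$ there is a polynomial $Q$ with $|\mu_1|_{(b,v)}\le Q(|v|)\,e^{-\frac12|v|^2}$. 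Along the graph $\xi(\sigma,b)=(e^{\sigma}s(b),s(b))$ the integrand therefore carries the super-exponential factor $e^{-\frac12 e^{2\sigma}|s(b)|^2}$, which dominates the polynomial growth of the induced volume on $\xi([0,\sigma]\times U)$. The only delicate region is a neighborhood of $Z=s^{-1}(0)$: there the transversality $s\pitchfork 0$ makes the Gram matrix governing the induced volume element degenerate, so the volume element grows only like $e^{(\rank E)\sigma}$, and $e^{-\frac12 e^{2\sigma}|s(b)|^2}$ still wins after integrating across the codimension-$(\rank E)$ set $Z$; this is precisely the content of the Gaussian estimates in Proposition 1.3 of \cite{Ge}. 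The same bound gives (\ref{eq2snv}), since $\int_{U(q)}|\mu_1|=\int_{E_b}|\mu_1|$ is a convergent Gaussian integral, uniformly bounded for $q$ in the zero section.

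Finally, with all hypotheses in place, Theorem \ref{nncext} yields
\[ \lim_{\sigma\to\infty}\phi_\sigma^*\mu_1=\Res_{F_0}^u(\mu_1)\,[s^{-1}(0)],\qquad \Res_{F_0}^u(\mu_1)=\tau_{F_0}^*\!\!\int_{U(F_0)/F_0}\!\!\mu_1=\int_{E/B}\mu_1, \]
because $F_0$ is the only critical manifold with $\codim S(F)\le\deg\mu_1$ and its unstable bundle is $E$ with fibers the fibers of $E\to B$. By Proposition 1.3(2) of \cite{Ge} the fiber integral $\int_{E/B}\mu_1$ equals $1$ (this is exactly the statement that $\mu_1$ is a Thom form — a Gaussian integral in each fiber), so $\Res_{F_0}^u(\mu_1)=1$ and $\lim_{t\to\infty}s^*\mu_t=[s^{-1}(0)]$, as asserted. (The transgression part of Theorem \ref{nncext}, combined with $d\mu_1=0$, additionally shows that $[s^{-1}(0)]$ and $s^*\mu_1$ differ by the boundary of an $L^1_{\loc}$ current, i.e.\ the refined Chern–Gauss–Bonnet equality.)
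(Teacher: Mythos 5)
Your argument is correct and follows essentially the same route as the paper: the paper likewise obtains Getzler's theorem by applying the non-compact Theorem \ref{nncext} to the radial flow of $f(v)=\tfrac12|v|^2$, with the zero section as the unique critical/stable manifold and the single residue $\int_{E/B}\mu_1=1$ computed as a fiberwise Gaussian (your scaling identity $\rho_t^*\mu_1=\mu_t$, hence $s^*\mu_t=\phi_{\ln t}^*\mu_1$, and the verification of strong atomicity and of (\ref{eq2snv}) are details the paper leaves implicit). One small correction: the assertion that \emph{any} section is transversal to $S(F_0)$ is false, since here $S(F_0)$ is the zero section and transversality to it is precisely the hypothesis $s\pitchfork 0$ of the theorem --- which you then invoke anyway, so the argument is unaffected.
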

The original proof was based on an ad-hoc argument relying on the Lebesgue Dominated Convergence Theorem.

\begin{appendix}

\section{Resolutions of Bott-Samelson type with corners} \label{appA}
 In  \cite{La},  Latschev proves that the closure of any stable or unstable manifold of a Morse-Bott -Smale flow on a compact manifold has finite volume. The main technical tools can be traced back to the proof of Theorem 14.3 in \cite{HL3}. We choose to present  here most of the details, instead of just citing the article for several reasons. First and foremost, we need the details for the proof of Theorem \ref{gan} and of Proposition \ref{essprop2}.  Second, while the main ideas are the same, the presentation below is slightly different, simplifying certain proofs and allowing for a more direct formalization of the arguments. An example of this  is the discussion of the refinement of  the smooth structure  one is forced  to consider at a certain point on the product of two manifolds with corners (compare with  Lemma 3.4 in \cite{La}).

 The main result of Latschev affirms the existence of  Bott-Samelson resolutions for the stable and unstable manifolds. The domains of these resolutions are \emph{manifolds with corners}. More precisely one has the following:

\begin{theorem}[Latschev] \label{Lath} Let $P$ be a compact manifold with a Morse-Bott-Smale function $f$ on it. Consider the gradient flow of $f$. The closure of any unstable  manifold is the image of a smooth map whose domain is a manifold with corners and whose target is $P$. This map is a  diffeomorphism from the interior of the manifold with corners to an open dense set of the stable manifold. An analogous result holds true for any stable manifold.
\end{theorem}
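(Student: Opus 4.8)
The plan is to construct the manifold with corners inductively, by doing one blow-up for each critical level, starting from the bottom (for unstable manifolds) or top (for stable manifolds). Since the statement is symmetric under $f\mapsto -f$, it suffices to treat unstable manifolds. Order the critical values $c_1<c_2<\dots<c_N$ (finitely many since $P$ is compact), and let $F$ be a critical manifold at level $c_j$. I would prove by downward induction on $j$ the following refined statement: for each regular value $a$ slightly below $c_j$, the intersection $\overline{U(F)}\cap f^{-1}(a)$ is the image of a smooth map $\theta_a\colon \widetilde W\to f^{-1}(a)$ whose domain $\widetilde W$ is a compact manifold with corners, which is a diffeomorphism from the interior onto an open dense subset, and which is moreover transversal to all stable bundles $S(F')$ at level $a$; then flowing down (which is a diffeomorphism onto its image on each regular slab between consecutive critical values) reproduces the piece of $\overline{U(F)}$ between two levels, and the piece near the next critical manifold below is handled by the Morse--Bott lemma together with a single elementary blow-up.

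The key local model is exactly the one that appears in (\ref{eqap1})--(\ref{eq2t31}): near a critical manifold $F'$ at level $c$, the Morse--Bott lemma gives coordinates $(x,y,z)$ with $f=c+\tfrac12(|x|^2-|y|^2)$, and an incoming piece of $\overline{U(F)}$ arriving at level $c+\delta$ with bounded $|x|\cdot|y|$ extends across $F'$ by blowing up the locus $\{x=0\}$ (which is precisely $S(F')$ in these coordinates); the map $\theta_{-\delta}$ analogous to (\ref{eq2t31}) patches the two sides together smoothly. The transversality to stable bundles that must be carried along the induction is exactly the content of Lemma \ref{transvlemma}: here one uses the Smale condition, which guarantees that after the blow-up the new map meets $S(F')\cap f^{-1}(c-\delta)$ transversally provided it met $S(F')\cap f^{-1}(c+\delta)$ transversally. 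One then observes that the strict transform of $\overline{U(F)}$ under the blow-up is the closure of the preimage, a compact manifold with corners whose interior still maps diffeomorphically onto an open dense set (broken trajectories correspond exactly to the new corner/boundary strata), and whose boundary faces are indexed by the critical manifolds $F''$ with $\overline{U(F)}\supset \overline{U(F'')}$.

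Gluing: between two consecutive critical levels the gradient flow is a product, so $\overline{U(F)}$ restricted to that slab is diffeomorphic to $[a,b]\times\big(\overline{U(F)}\cap f^{-1}(a)\big)$ (a manifold with corners, since the slice is), and the passage through each critical level is the blow-up construction above; concatenating these finitely many pieces produces a single compact manifold with corners $\widetilde{\overline{U(F)}}$ with a smooth map $\Pi$ to $P$, equal to the composite of all the blow-down maps and the flow, which is a diffeomorphism on the interior onto the open dense subset $U(F)$ of $\overline{U(F)}$, and $\operatorname{Im}\Pi=\overline{U(F)}$ because every (possibly broken) trajectory in $\overline{U(F)}$ is a limit of unbroken ones. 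Finite volume then follows immediately, since $\Pi$ is a smooth map from a \emph{compact} manifold with corners.

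The main obstacle, and the point requiring the most care, is the smooth-structure issue on products of manifolds with corners: when one flows a slice that is already a manifold with corners through another critical level and blows up again, the naive product smooth structure on the domain is the wrong one, and one must pass to a refined smooth structure so that the resulting map is genuinely smooth (this is the analogue of Latschev's Lemma 3.4, which the Appendix promises to streamline). Concretely, one checks that in the overlap charts the transition maps, built from expressions of the type $(\lambda,v,b)\mapsto\big(v\sqrt{\sqrt{\delta^2+\lambda^2|\alpha|^2}+\delta},\dots\big)$ as in (\ref{eq2t31}), are smooth only after this refinement, and one verifies that the refined structure is consistent across all the blow-ups so that the final $\widetilde{\overline{U(F)}}$ is a bona fide manifold with corners. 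Everything else---transversality bookkeeping, the density of the interior, compactness---is then routine given the Morse--Bott lemma and Lemma \ref{transvlemma}.
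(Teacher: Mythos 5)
Your proposal follows essentially the same route as the paper's Appendix \ref{appA}: sweep the fixed unstable manifold through the critical levels one at a time, blow up the preimage of the stable locus of each critical manifold encountered using the local model of (\ref{eq2t31})/(\ref{apcommdiag1}), propagate complete transversality by the Smale condition exactly as in Lemma \ref{transvlemma}, refine the product smooth structure at the critical levels, and terminate after finitely many steps by compactness. The only blemishes are notational: with your downhill convention (the slice arriving at level $c+\delta$ and leaving at $c-\delta$) the blow-up locus in the Morse--Bott coordinates is $\{y=0\}$ rather than $\{x=0\}$ (the two are exchanged under $f\mapsto -f$), and the induction is really over the successive critical levels crossed by $\overline{U(F)}$ rather than over the index $j$ of the level $c_j$ of $F$ itself --- neither affects the argument.
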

 The finite volume of the stable/unstable manifolds of the gradient of $f$ is an obvious corollary of this theorem. 
 
 The main technical tool used by Latschev is the blow-up of the stable spheres of critical manifolds in order to let the flow lines flow "without stops". The strategy is as follows. Organize\footnote{Technically one has to work with \emph{gradient like} vector fields. The essential arguments are the same though.} first the critical manifolds to lie at given energy\footnote{We follow Milnor in calling $f$ the energy function even if the appropriate notion is the  action functional. We do it for the sake of suggestive expressions  like "higher/lower energy".} levels. In order to show that a fixed unstable manifold $U(F)$ (where  $F$ is supposed for simplicity to lie at level $0$) is covered by a manifold with corners one starts by noticing that  one has a natural  surjective smooth map from $[0,a]\times U_{a}(F)$ to $U_{\leq a}(F)$, where $U_{\leq a}(F)$ is  $f^{-1}([0,a])\cap U(F)$ and $U_{a}(F):=f^{-1}(a)\cap U(F)$.   This is true at least for $a$ smaller than the next critical level. The rest goes roughly as follows.
 
 Let $W_1:=U_{a}(F)$ and consider the canonical inclusion $\iota:W_1\hookrightarrow f^{-1}(a)$. Let $\widetilde{W}_1$ be the blow-up of  $S(F')\cap W_1$ inside $W_1$ and  let $\tilde{V}_a$ be the blow-up of $S(F')\cap f^{-1}(a)$ inside $f^{-1}(a)$.  The inclusion $\iota$ lifts to a map $\tilde{\iota}:\widetilde{W}_1\ra \tilde{V}_a$ because of the transversality of $U(F)$ with $S(F')$. It turns out that  $\tilde{V}_a$ is just the $a$-slice of a more general blow-up $\tilde{V}$ of the union  $S(F')\cup U(F')$ inside the neighborhood of type $f^{-1}([a, a+b])$, $b>0$ of $F'$. Here, $f(F')=a+c$ with $c\in (0,b)$ and $a+b$ is a regular value coming right after $a+c$ .  Moreover, one has an extension of  $\tilde{\iota}$ to a map
  \[ \hat{\iota}: [a,a+b]\times \widetilde{W}_1\ra \tilde{V}\]
   which is smooth away from $\{a+c\}\times \widetilde{W}_1$. We have to refine the manifold with corners structure of the product $ [a,a+b]\times \widetilde{W}_1$ in order to turn $\hat{\iota}$ into a smooth map. But the crucial point here is that due to the  Smale property of the flow, $\hat{\iota}_{a+b}:\widetilde{W}_1\ra \tilde{V}_{a+b}$ is completely transversal to all $S(F'')$ for all critical manifolds $F''$ that lie further down the flow. Better said, it is transversal to the preimage of $S(F'')$ via the blow-up projection $\tilde{V}_{a+b}\ra f^{-1}(a+b)$. And the process can go on by taking $W_2:=\widetilde{W}_1$ and $\widetilde{W}_2$ to be the blow-up of $\hat{\iota}_{a+b}^{-1}(S(F''))$  inside $\widetilde{W}_1$.
 
 In this appendix we explain the inductive step. We describe first  the blow-up of a union $S(F)\cup U(F)$ in the coordinates of the Morse-Bott Lemma  \ref{MBL}. Then (see Definition \ref{comptrans}) we consider a generic smooth map $\sigma$ from a manifold with corners $W$ to a certain level set $f^{-1}(-\delta)$ that lies before  $F\subset f^{-1}(0)$. The map $\sigma$ is assumed to be transversal (along every corner-strata) to all $f^{-1}(-\delta)\cap S(F')$, including $F'=F$.  We describe how to build a map   $\tilde{\sigma}: [-\delta,\delta]\times\widetilde{W} \ra f^{-1}[-\delta,\delta]$, where $\widetilde{W}$ is the blow-up of $\sigma^{-1}(S(F))$ inside $W$. The map takes $\{t\}\times \widetilde{W}$ to $f^{-1}(t)$. The restriction $\tilde{\sigma}_{-\delta}$ coincides with $\sigma$  away from the blow-up locus  and the restriction $\tilde{\sigma}_{\delta}$ to $\{\delta\}\times \widetilde{W}$ is completely transversal to all  $S(F')\cap f^{-1}(\delta)$.
 
 \vspace{0.3cm}
 
 We present the details now. The difficulties are local around the critical manifolds.  Let 
\[ f:\bR^k\times \bR^{m}\times \bR^p\ra \bR,\quad\quad f(\underline{x},\underline{y},\underline{z})=\frac{1}{2}\left(\sum_{i=1}^kx_i^2-\sum_{j=1}^{m}y_j^2\right),
\]
be a Morse-Bott function with a unique critical manifold represented by the subspace $\underline{x}=\underline{y}=0$. Its gradient flow, $\gamma: \bR\times \bR^{k+m+p}\ra \bR^{k+m+p}$  is easy to describe:
\begin{equation}\label{gamfl} \gamma(t,\underline x,\underline y,\underline{z})=(e^t\underline x,e^{-t}\underline{y},\underline{z}).
\end{equation}

We warm-up by considering the two dimensional version of the blow-up we aim to analyze. For every $\delta>0$  let
\[H_{-\delta,\delta}:=\{(r,s)\in \bR^2~|~r\geq 0,\; s\geq 0,\; rs=1,\; -\delta \leq \frac{1}{2}\left( r^2-s^2\right)\leq \delta\}\]
 be a piece of the standard hyperbola in $\bR^2$ lying in the first quadrant between the levels of energy $-\delta$ and $\delta$. We use it to "blow-up" the broken segment/trajectory:
 \begin{equation}\label{eq0} \{r=0,\;  \sqrt{\delta}\geq s\geq 0\}\sqcup\{ s=0,\; 0\leq r\leq \sqrt{\delta}\}
 \end{equation}
as follows. Let $\psi:H_{-\delta,\delta}\times [0,\epsilon]\ra \bR^2_{x\geq 0,y\geq 0}$ be the map:
\begin{equation}\label{eq1}  \psi(t,q)= \left(\sqrt{\sqrt{t^2+q^2}+t} ,\sqrt{\sqrt{t^2+q^2}-t}\right), 
\end{equation}
where the "energy" coordinate $t=\frac{1}{2}(r^2-s^2)\in[-\delta,\delta]$ uniquely identifies a point on the hyperbola $H_{-\delta,\delta}$. The map $\psi$ takes a point $(t,q)$ to a point $(x,y)$ that lies on a certain hyperbola indexed by $q$ but has the same level of "energy" as $t$. This means that
\begin{itemize}
\item[(i)] $xy=q\in[0,\epsilon]$;
\item[(ii)] $\frac{1}{2}\left(x^2-y^2\right)=t=\frac{1}{2}\left(r^2-s^2\right)$.
\end{itemize}
The image of $\psi$ represents a continuous deformation of (a piece of) the hyperbola $xy=\epsilon$ to the broken trajectory (\ref{eq0}).

 Notice that $\psi$ is not differentiable at $(t,q)=(0,0)$. However  we can change the smooth structure of $H_{-\delta,\delta}\times [0,\epsilon]$ at exactly this point by introducing a corner hence turning this space into something resembling a curved pentagon, on which the map $\psi$ becomes smooth. The trivial way to do that is by noticing that $\psi$ is a homemorphism onto its image. We put the manifold with corners structure on $H_{-\delta,\delta}\times [0,\epsilon]$ that makes  $\psi$ a smooth chart an denote this smooth structure by
\[ H_{-\delta,\delta}\times_{\psi}[0,\epsilon].
\]

We use  now  the model of the blow-up of the broken trajectory \ref{eq0}  to build an analogous family blow-up of all the broken trajectories of $\nabla f$  that pass through the origin.  Let 
\begin{equation}\label{eqap1} V=\{(\underline {x},\underline{y},\ul{z})\in\bR^k\times \bR^{m}\times \bR^p~|~-\delta\leq f(x,y)\leq \delta,\;|x|\cdot |y|\leq \epsilon\}\end{equation} be a neighborhood of the critical manifold $\{\ul{x}=\ul{y}=0\}\subset\bR^{k+m+p}$. We consider a fiberwise  version of the blow-up process above\footnote{This is not quite the "standard" blow-up as the blow-up locus is not a manifold.  It is a union of two manifolds that intersect transversally, which is  the next best thing so to speak.} for  $V_0:=\{|x|\cdot |y|=0\}\cap V$ inside $V$. The word "fiberwise" refers to the projection of $V$ and/or $V_0$ onto the $\ul{z}$-coordinate. This blow-up is given by the following map:
\begin{equation}\label{apcommdiag1} \Psi:H_{-\delta,\delta}\times [0,\epsilon]\times S^{k-1}\times S^{m-1}\times \bR^p\ra \bR^k\times \bR^{m}\times \bR^p,
\end{equation}
\[\Psi(t,q, \underline {x},\ul{y},\ul{z})= \left(\ul{x}\sqrt{\sqrt{t^2+q^2}+t}  ,\;\ul{y}\sqrt{\sqrt{t^2+q^2}-t},\; \ul{z}\right)
\]
One can check easily that $\Imag\Psi=V$ and $\Imag\Psi\bigr|_{q=0}=V_0$ and 
$\Psi\bigr|_{q\neq 0}$ is a diffeomorphism onto $V\setminus V_0$.
\begin{remark} \label{flowlinepsi} There is one other very important point of view on  $\Psi$. Suppose $(x,y,z)\in \bR^k\times \bR^{m}\times \bR^p$ such that $|x||y|=:q\neq 0$. Then the flow line determined by $(x,y,z)$ does not end at a critical point in this chart of $F$. So what are the coordinates of the point of intersection of this flowline with the level set $f^{-1}(t)$? A simple computation using $\gamma$ (see (\ref{gamfl})) gives the answer:
\[ \left(\frac{\ul{x}}{|x|}\sqrt{\sqrt{t^2+q^2}+t},\frac{\ul{y}}{|y|}\sqrt{\sqrt{t^2+q^2}-t},\;\ul{z} \right)=\Psi\left(t, |x| |y|,\frac{\ul{x}}{|x|},\frac{\ul{y}}{|y|},\ul{z}\right).
\] 
\end{remark}

\vspace{0.3cm}

Now, let 
 \[\tilde{V}:=H_{-\delta,\delta}\times [0,\epsilon]\times S^{k-1}\times S^{m-1}\times \bR^p=\Dom{( \Psi)}\] 
 and  $\tilde{V}_t\subset \tilde{V}$ be the level set that freezes the first coordinate at $t$. Notice that $\Psi(\tilde{V}_t)\subset f^{-1}(t)$. 
 
 The map $\Psi$ is not differentiable  exactly at   the points $(0,0,x,y,z)\in\tilde{V}$. In order to turn $\Psi$ into a differentiable map one takes $\tilde{V}$ to be the product manifold structure of $H_{-\delta,\delta}\times_{\psi}[0,\epsilon]$ above with the standard atlas of $S^{k-1}\times S^{m-1}\times \bR^p$. We denote this manifold with corners by $\tilde{V}_{\psi}$.
 
\vspace{0.3cm}

The set
 \[ B_{-\delta}:=\{(0,y,z)~|~|y|^2=2\delta\}\subset\{0\}\times \bR^{m}\times \bR^p\] is the stable sphere bundle (of the unique critical manifold of $f$) lying at level $f=-\delta$.  Let $V_{-\delta}:=V\cap f^{-1}(-\delta)$ be a neighborhood of $B_{-\delta}$, as subsets of $f^{-1}(-\delta)$. Notice that $B_{-\delta}=V_0\cap f^{-1}(-\delta)\subset V_{-\delta}$.

 The map $\Psi\bigr|_{t=-\delta}:\tilde{V}_{-\delta}\ra V_{-\delta} $ can be seen as the real blow-up of the submanifold $B_{-\delta}$ of $V_{-\delta}\subset f^{-1}(-\delta)$. Since both $B_{-\delta}$ and $V_{-\delta}$ are fiber bundles over $\bR^p$, the same map can alternatively  be regarded as the fiberwise  blow-up with respect to $z$. Notice that the piece of the boundary of $\tilde{V}_{-\delta}$ that gets mapped to $B_{-\delta}$ is a (trivial) fiber bundle over $\bR^p$  with fiber $S^{k-1}\times S^{m-1}$. This fiber  is diffeomorphic to the spherical normal bundle of $B_{-\delta}\cap \{z=\ct\}$ as a submanifold of $f^{-1}(-\delta)\cap \{z=\ct\}$, as  usually happens in a real blow-up.
 
 \begin{remark}\label{totblup} While $\Psi$ is the blow-up of $V_0$ inside $V$, we can use $\Psi$ to build a blow-up of $V_0$ inside $f^{-1}([-\delta,\delta])$ by defining the following manifold with corners:
\[\widetilde{f^{-1}[-\delta,\delta]}:=f^{-1}([-\delta,\delta])\sqcup \tilde{V}/\sim,
\]
where $\sim$ identifies a point $b\in f^{-1}([-\delta,\delta])$ with $a$, $a\in\tilde{V}$ provided $a^q\neq 0$ and $\Psi(a)=b$. One has a natural surjective smooth map
 \begin{equation}\label{eqap5} \widetilde{\Psi}: \widetilde{f^{-1}[-\delta,\delta]}\ra f^{-1}([-\delta,\delta]).\end{equation} It also makes sense to speak about the $t$-level set of $\widetilde{f^{-1}[-\delta,\delta]}$ since in the gluing process one identifies points in the $t$-level set of $f$ with points with the first coordinate equal to $t$ in $\tilde{V}$ for every $t\in [-\delta,\delta]$.
\end{remark}


\vspace{0.3cm} 

Suppose now  we are given a smooth map $\sigma: W\ra V_{-\delta} \subset f^{-1}(-\delta) $ defined on a manifold with corners $W$, endowed with a Riemannian metric  such that $\sigma$ is \emph{completely transverse} to $B_{-\delta}$. This means the following.  Let $W:=\bigcup W_i$ be the decomposition of $W$ into smooth strata, each $W_i$ representing the codimension $i$ boundary of $W$.
\begin{definition}\label{comptrans} The map $\sigma$ is said to be completely transverse to $B_{-\delta}$  if the restriction $\sigma\bigr|_{W_i}$   is transverse to $B_{-\delta}$.\end{definition}
One shows by using the Implicit Function Theorem for manifolds with corners that $~$  $\sigma^{-1}(B_{-\delta})$ is a submanifold with corners of codimension $l=\codim B_{-\delta}$. The corner stratum of $\sigma^{-1}(B_{-\delta})$ of codimension $i$ lies inside $W_i$. One can therefore speak about the normal bundle of $\sigma^{-1}(B_{-\delta})$ which is isomorphic to the pull-back of the normal bundle $N B_{-\delta}$ of $B_{-\delta}$ as a submanifold of $V_{-\delta}$.

  Let $\Bl:\widetilde{W}\ra W$ be the result of performing a standard  blow-up of $\sigma^{-1}(B_{-\delta})$ inside $W$. By definition,
   \begin{equation}\label{bldiag}\widetilde{W}=W\setminus \sigma^{-1}(B_{-\delta})\sqcup S(N\sigma^{-1}(B_{-\delta}))\times [0,1)/\sim^{\beta},\end{equation} where $S(N\sigma^{-1}(B_{-\delta}))$ is the spherical normal bundle and $\beta:S(N\sigma^{-1}(B_{-\delta}))\times[0,1)\ra W$ is a smooth map whose image coincides with a tubular neighborhood of $\sigma^{-1}(B_{-\delta})$ and such that $\beta_0$ coincides with the bundle projection $S(N\sigma^{-1}(B_{-\delta}))\ra \sigma^{-1}(B_{-\delta})$. An example of such a map  $\beta$ is:
   \[  \beta(q,v,t)=\exp_q(vt),\quad\forall q\in \sigma^{-1}(B_{-\delta}),\; |v|=1,\;  t\in[0,1)
   \] Finally $p\sim^{\beta}(q,v,t)\Leftrightarrow \beta(q,v,t)=p$.  
      
  Since $N\sigma^{-1}(B_{-\delta})=\sigma^*N B_{-\delta}$ there exists a lift of $\sigma$ that makes the diagram commutative.
\begin{equation}\label{apcommdiag}  \xymatrix{ \widetilde{W}\ar^{\tilde{\sigma}_{-\delta}}[r]  \ar[d]_{\Bl}& \tilde{V}_{-\delta}\ar[d]^{\Psi_{-\delta}} \\
W\ar[r]^{\sigma} & V_{-\delta} }
\end{equation}
The map $\tilde{\sigma}_{-\delta}$ takes the exceptional divisor $\Bl^{-1}(\sigma^{-1}(B_{-\delta}))$ (which is also the newest "boundary") into the exceptional divisor $\tilde{V}_{-\delta}\cap \{q=0\}$ and equals $(\Psi_{-\delta,q\neq 0})^{-1}\circ \sigma\circ \Bl$ away from those points.

One can construct  an extension of the map $\tilde{\sigma}_{-\delta}$ simply:
\begin{equation}\label{apdiag2} \tilde\sigma: [-\delta,\delta]\times \widetilde{W}\ra \tilde{V},\quad\quad \tilde{\sigma}(t,\tilde{w}):=(t,\tilde{\sigma}_{-\delta}(\tilde{w})).\footnote{
In this definition, $\tilde{\sigma}_{-\delta}$ stands for the last $4$ ("non-trivial")  coordinates of $\tilde{\sigma}_{-\delta}$ in $\tilde{V}_{-\delta}$.}\end{equation}

The map $\tilde{\sigma}$  is not differentiable when one considers the atlas $\tilde{V}_{\psi}$ on the right hand side. The points where differentiability fails are of type $(0, \tilde{w})$ with $\tilde{w}\in \Bl^{-1}(\sigma^{-1}(B_{-\delta}))$. 

However, we can refine the product smooth structure on $[-\delta,\delta]\times \widetilde{W}$ by introducing new corners at the non-differentiability points in order to make $\tilde{\sigma}$ smooth. First, there exists an open neighborhood of $\Bl^{-1}(\sigma^{-1}(B_{-\delta}))$  in $\widetilde{W}$ of product type $[0,1)\times \Bl^{-1}(\sigma^{-1}(B_{-\delta}))$. This is immediate from the way the blow-up is built. We  modify the product smooth structure on $[-\delta,\delta]\times[0,1)\times \Bl^{-1}(\sigma^{-1}(B_{-\delta}))$ by introducing just one new corner in $[-\delta,\delta]\times[0,1)$ at the point $(0,0)$ analogous to what was done above with $H_{-\delta,\delta}\times_{\psi}[0,\epsilon)$ and then we take the product with $\Bl^{-1}(\sigma^{-1}(B_{-\delta}))$. This is the manifold with corners structure we consider on $[-\delta,\delta]\times \widetilde{W}$ for which the map $\tilde{\sigma}$ is smooth.
\begin{remark} We took the image of $\sigma$ to be a subset of $V_{-\delta}$ to better emphasize what happens around the critical manifold. However, it is important for the induction step to consider, more generally, maps $\sigma:W\ra f^{-1}(-\delta)$. Since $V_{-\delta}\subset f^{-1}(-\delta)$ is an open subset, there exists an inclusion map $\tilde{V}_{-\delta}\ra \widetilde f^{-1}(-\delta)$ where the later space is the blow-up of $B_{-\delta}$ inside $f^{-1}(-\delta)$. For the same reasons as above there exists a lift $\tilde{\sigma}_{-\delta}:\widetilde{W}\ra\widetilde {f^{-1}(-\delta)}$. Moreover, there exists a map $\tilde{\sigma}:[-\delta,\delta]\times \tilde{W}\ra \widetilde {f^{-1}[-\delta,\delta]}$ (see Remark \ref{totblup}). Indeed, for every point $(t,p)$ with $p\notin \Bl^{-1}(\sigma^{-1}(B_{-\delta}))$, $\tilde{\sigma}(t,p)$ is the point at the $t$-level of $\widetilde {f^{-1}[-\delta,\delta]}$ obtained by first considering $p_t$, the point of intersection of the flow line $\sigma(p)$ determines with the $t$-level set of $f$ and then taking (see \ref{eqap5}): $\widetilde{\Psi}^{-1}(p_t)\in \widetilde {f^{-1}[-\delta,\delta]}$. This extends smoothly to points $p\in \Bl^{-1}(\sigma^{-1}(B_{-\delta}))$ because of Remark \ref{flowlinepsi}.
\end{remark}

\vspace{0.3cm}
The map to look at is $\Psi \circ \tilde{\sigma}:[-\delta,\delta]\times \widetilde{W}\ra V$.
The crucial property of $\Psi\circ\tilde{\sigma}$ is that it preserves the transversality.  This means the following. Recall that  $\sigma:W\ra  V_{-\delta}$  is completely transverse to $B_{-\delta}=S(F)\cap f^{-1}(-\delta)$.
\begin{lemma} \label{transvlemma} Suppose that  $\sigma$ is also completely transverse to $S(F')\cap f^{-1}({-\delta})$ where $F'$ is another critical manifold. Then $\Psi_{\delta}\circ\tilde{\sigma}_{\delta}$ is completely transverse to $S(F')\cap f^{-1}({\delta})$ where  $\Psi_{\delta}:\tilde{V}_{\delta}\ra V_{\delta}$ is the restriction of $\Psi$.\end{lemma}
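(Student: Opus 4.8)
The plan is to verify transversality of $\Psi_\delta\circ\tilde\sigma_\delta$ to $S(F')\cap f^{-1}(\delta)$ on each corner stratum of $\widetilde W$, distinguishing the strata lying in the open set $\{q\neq 0\}$ from the strata contained in the exceptional divisor $E:=\Bl^{-1}(\sigma^{-1}(B_{-\delta}))$ --- the new boundary face created by the blow-up --- since the behaviour is genuinely different on the two pieces and the Smale hypothesis on the flow is needed only on the second.

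Over $\{q\neq 0\}$ the map $\Bl$ is a diffeomorphism, and comparing the values $\Psi(-\delta,q,\hat x,\hat y,\hat z)$ and $\Psi(\delta,q,\hat x,\hat y,\hat z)$ --- which lie on the same flow line, since $|x|\,|y|=q$ is a flow invariant --- together with the commutative diagram (\ref{apcommdiag}) shows that here $\Psi_\delta\circ\tilde\sigma_\delta=\chi_\delta\circ\sigma\circ\Bl$, where $\chi_\delta$ is the ``slide along the flow line to level $\delta$'' map of Remark \ref{flowlinepsi}. Because $S(F')$ is flow invariant and $S(F')\cap B_{-\delta}=\emptyset$, one has $\chi_\delta^{-1}\bigl(S(F')\cap f^{-1}(\delta)\bigr)=S(F')\cap f^{-1}(-\delta)$ on the domain of $\chi_\delta\big|_{f^{-1}(-\delta)}$, and $\chi_\delta$ restricts there to a diffeomorphism onto its (open, dense) image in $f^{-1}(\delta)$. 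Hence the hypothesis that $\sigma\big|_{W_i}\pitchfork S(F')\cap f^{-1}(-\delta)$ on each stratum $W_i$ transports, via the local diffeomorphisms $\chi_\delta$ and $\Bl$, to $\Psi_\delta\circ\tilde\sigma_\delta\pitchfork S(F')\cap f^{-1}(\delta)$ on every stratum of $\widetilde W\setminus E$; on this open part the image of $\Psi_\delta\circ\tilde\sigma_\delta$ avoids $U(F)\cap f^{-1}(\delta)$, so nothing is lost.

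On the exceptional divisor the identity $\Psi(\delta,0,\hat x,\hat y,\hat z)=(\sqrt{2\delta}\,\hat x,0,\hat z)$ shows $\Psi_\delta\circ\tilde\sigma_\delta(E)\subset A_\delta:=U(F)\cap f^{-1}(\delta)$, so transversality to $Z_\delta:=S(F')\cap f^{-1}(\delta)$ cannot be free and must use the Morse--Bott--\emph{Smale} property. The Smale condition $U(F)\pitchfork S(F')$ gives $A_\delta\pitchfork Z_\delta$ inside $f^{-1}(\delta)$, and this reduces the problem to showing that $\Psi_\delta\circ\tilde\sigma_\delta\big|_E:E\to A_\delta$ is transverse, stratum by stratum, to the submanifold $A_\delta\cap Z_\delta$ of $A_\delta$. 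Now $\Psi_\delta\circ\tilde\sigma_\delta\big|_E$ factors as the bundle map $\tilde\sigma_{-\delta}\big|_E:E\to S(N B_{-\delta})$ of (\ref{apcommdiag}) --- which covers $\sigma\big|_{\sigma^{-1}(B_{-\delta})}$ and records the normal direction in which $\sigma$ meets $B_{-\delta}$ --- followed by the submersion $S(N B_{-\delta})\to A_\delta$ that forgets the base direction along $B_{-\delta}$ (geometrically: collapses the incoming $S(F)$-direction of a broken trajectory through $F$ and keeps its outgoing $U(F)$-direction). Thus it suffices to show $\tilde\sigma_{-\delta}\big|_E$ is transverse to the preimage of $A_\delta\cap Z_\delta$ in $S(N B_{-\delta})$; the geometric point that makes this work is that this preimage is exactly the intersection of the exceptional divisor of $\widetilde V_{-\delta}$ with the strict transform of $S(F')\cap f^{-1}(-\delta)$, i.e. it consists precisely of the limiting directions in which $S(F')\cap f^{-1}(-\delta)$ approaches $B_{-\delta}$. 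Granting this, the assumption that $\sigma$ is completely transverse both to $B_{-\delta}$ and to $S(F')\cap f^{-1}(-\delta)$ forces $\tilde\sigma_{-\delta}\big|_E$ to be completely transverse to that preimage, which closes this case.

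I expect the main obstacle to be precisely this last identification on the exceptional divisor: one must show, working in the Morse--Bott normal coordinates of Lemma \ref{MBL} and bookkeeping the broken trajectories through $F$, that the closure of $\Psi_{-\delta}^{-1}\bigl(S(F')\cap f^{-1}(-\delta)\bigr)$ in $\widetilde V_{-\delta}$ is a submanifold with corners meeting the exceptional divisor $\{q=0\}$ transversally, along exactly the locus cut out (after flowing down) by the Smale intersection $U(F)\cap S(F')$. This is the step where the Smale property is genuinely indispensable --- a plain Morse--Bott function admits no such description, compare the remark following Lemma \ref{MBL} --- and it is also the point at which one must refine the product corner structure near $\{q=0\}\times\Bl^{-1}(\sigma^{-1}(B_{-\delta}))$ so that all the maps in sight are honestly smooth.
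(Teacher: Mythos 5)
Your first case (points of $S(F')\cap f^{-1}(\delta)$ away from $A_\delta:=U(F)\cap f^{-1}(\delta)$, handled by the flow diffeomorphism between the two level sets minus the blown-up loci) is exactly the paper's argument. The gap is in your second case, and you flag it yourself: after the (correct) reduction via $A_\delta\pitchfork Z_\delta$ to transversality of $\Psi_\delta\circ\tilde\sigma_\delta\bigr|_{E}:E\ra A_\delta$ to $A_\delta\cap Z_\delta$, everything is made to rest on two unproved claims — (i) that the preimage of $A_\delta\cap Z_\delta$ in $S(NB_{-\delta})$ is the trace of the strict transform of $S(F')\cap f^{-1}(-\delta)$ on the exceptional divisor, and (ii) that complete transversality of $\sigma$ to $B_{-\delta}$ and to $S(F')\cap f^{-1}(-\delta)$ "forces" transversality of $\tilde\sigma_{-\delta}\bigr|_{E}$ to that locus. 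Claim (ii) is not a formal consequence of the hypotheses: transversality to a center $Y$ and to a manifold $Z$ accumulating on $Y$ does not in general control the position of the lifted map relative to the strict transform of $Z$ in the blow-up along $Y$, and establishing the structure of that strict transform near $\{q=0\}$ is essentially as hard as the lemma itself. So what you present is a reduction to a statement you acknowledge you cannot yet prove ("Granting this\dots", "I expect the main obstacle to be precisely this last identification"), not a proof.

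The paper's proof of this case avoids the strict transform of $S(F')$ altogether, and this is the idea your proposal is missing. Since $\sigma\pitchfork B_{-\delta}$, the restriction of $\tilde\sigma_{-\delta}$ to the exceptional divisor \emph{is} the spherical normal bundle isomorphism induced by $d\sigma$, so over any point of $\sigma^{-1}(B_{-\delta})$ hit by the relevant stratum it covers an entire fiber sphere $\{(-\delta,0,v,w_0,z_0)\mid v\in S^{k-1}\}$ of $S(NB_{-\delta})$, and does so submersively in the $v$-direction. Pushing to level $\delta$ via $\Psi_\delta(\delta,0,v,w_0,z_0)=(\sqrt{2\delta}\,v,0,z_0)$, the differential of $\Psi_\delta\circ\tilde\sigma_\delta\bigr|_{\widetilde W_i}$ at the relevant point therefore contains the whole tangent space of the unstable sphere $A_\delta\cap\{z=z_0\}$ through $p$. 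The Morse--Bott--Smale condition says precisely that this unstable sphere is transverse to $S(F')\cap f^{-1}(\delta)$ inside $f^{-1}(\delta)$, so transversality of the composite map at $p$ follows immediately — no identification of limiting directions of $S(F')$, and no transversality statement for $\tilde\sigma_{-\delta}\bigr|_E$, is needed. If you want to salvage your route, you would have to prove your claims (i)–(ii) by hand (essentially a $\lambda$-lemma type analysis in the Morse--Bott chart), which is considerably more work than the surjectivity-onto-the-unstable-sphere argument above.
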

\begin{proof} Transversality should be clear for points $p\in S(F')\cap f^{-1}({\delta})$ which do not lie in the closed set 
\[A_{\delta}:=\{(x,0,z)~|~|x|^2=2\delta\} =U(F)\cap f^{-1}(\delta).\] Indeed,  the flow $\gamma$ induces a diffeomorphism between $f^{-1}(-\delta)\setminus B_{-\delta}$ and $f^{-1}(\delta)\setminus A_{\delta}$ and the flow preserves the transversality of the manifolds. By Remark \ref{flowlinepsi} this diffeomorphism has the expression:
\[ (x,y,z)\ra \Psi_{\delta}\left(|x||y|,\frac{x}{|x|},\frac{y}{|y|},z\right).
\]

 Combine this with  the fact that away from the blow-up locus one has:
  \[ \tilde{\sigma}_{\delta}=\left(\delta, |\sigma^x|\cdot|\sigma^y|, \frac{\sigma^x}{|\sigma^x|},\frac{\sigma^y}{|\sigma^y|},\sigma^z\right)\] and one gets the claim.

So far we have not used the Smale transversality property of the flow. We use it now. We fix a corner-stratum $W_i\subset W$ and take a look at $\tilde{\sigma}_{-\delta}\bigr|_{\widetilde {W}_i}$, where $\widetilde{W}_i$ is the result of blowing-up $\sigma^{-1} (B_{-\delta})$ as a submanifold of $W_i$. The  transversality of $\sigma\bigr|_{W_i}$ with $B_{-\delta}\subset V_{-\delta}$ implies that  $d\sigma: N\sigma^{-1}(B_{-\delta})\ra \sigma^*N{B_{-\delta}}$ is a bundle isomorphism along the manifold $\sigma^{-1}(B_{-\delta})\cap W_i\subset W_i$.

The interest in $d\sigma$ comes from the fact that the map $\tilde{\sigma}_{-\delta}$ restricted to the exceptional divisor
 \[  \widetilde{W}_i\cap \Bl^{-1}(\sigma^{-1}(B_{-\delta}))= S(N\sigma^{-1}(B_{-\delta})), \] can be identified with $d\sigma$ restricted to $S(N\sigma^{-1}(B_{-\delta}))$, the spherical normal bundle. Notice that the exceptional divisor in $\tilde{V}_{-\delta}$ is:
  \[(\Psi_{-\delta})^{-1}(B_{-\delta})= S(N{B_{-\delta}})=\{(-\delta, 0, v,w,z)~|~ (v,w,z)\in S^{k-1}\times S^{m-1}\times \bR^p\} \;\mbox{and}\]

Let us now take $p=(x_0,0,z_0)\in S(F')\cap A_{\delta}$ and suppose $p\in \Imag (\Psi_{\delta}\circ \tilde\sigma_{\delta}\bigr|_{\widetilde{W}_i})$. The sphere $A_{\delta}\cap \{z=z_0\}$ which contains $p$ is transverse to $S(F')\cap f^{-1}(\delta)$ by the Smale property. We show that this sphere is in the image of $\Psi_{\delta}\circ \tilde\sigma_{\delta}\bigr|_{\widetilde{W}_i}$  and we are done.

On one hand,
\[\Psi_{\delta}^{-1}(A\cap\{z=z_0\})=\{\delta,0,v,w,z_0)~|~(v,w)\in S^{k-1}\times S^{m-1}\}.
\]
Moreover:
\[  \tilde{\sigma}_{\delta}^{-1}\{(\delta,0,v,w,z_0)\}=(\tilde{\sigma}_{-\delta})^{-1}\{(-\delta,0,v,w, z_0)\}\qquad \forall(v,w)\in S^{k-1}\times S^{m-1}.
\]
We conclude that $p\in  \Imag (\Psi_{\delta}\circ \tilde\sigma_{\delta}\bigr|_{\widetilde{W}_i})$ implies that there exists $w_0\in S^{m-1}$ such that:
\[ (-\delta,0, x_0/\sqrt{2\delta},w_0,z_0)\in\Imag \tilde{\sigma}_{-\delta}\bigr|_{\widetilde{W}_i}.
\]
Clearly such a point $(-\delta,0, x_0/\sqrt{2\delta},w_0,z_0)\in \tilde{V}_{-\delta}$ is a point in the exceptional divisor of the blow-up of $B_{-\delta}$, i.e. in $S(NB_{-\delta})$. Since $\tilde{\sigma}_{-\delta}$ is the spherical normal bundle isomorphism $d\sigma$ we conclude that all points $\{(-\delta,0, v,w_0,z_0)~|~v\in S^{k-1}\}$ are in the image of $\tilde{\sigma}_{-\delta}$ and therefore all points $\{(\delta,0, v,w_0,z_0)~|~v\in S^{k-1}\}$ are in the image of $\tilde{\sigma}_{\delta}$. Hence $A_{\delta}\cap \{z=z_0\}$ is in the image of $\Imag (\Psi_{\delta}\circ \tilde\sigma_{\delta}\bigr|_{\widetilde{W}_i})$.

Finally, notice that the arguments are not affected by the change of smooth structure on $[-\delta,\delta]\times \widetilde{W}$ as they do not involve the points where this change happens.
\end{proof}

The proof of Theorem \ref{Lath} for $U(F)$ with $F$ lying at level $0$ is based on the construction of the Bott-Samelson resolution with corners which starts, as we said,  with $[0,\delta]\times U_{\delta}(F)$ and its natural projection to $U_{\leq\delta}(F)$. In this context we take $W_1=U_{\delta}(F)$, the spherical unstable bundle of $F$ and use the flow to  extend the previous map to one $[0, c-\delta']\times W_1\ra U_{\leq c-\delta'}(F)$ where $c$ is the next critical level at which lies $F'$. Now look at the map $\sigma: W_1\ra f^{-1}{(c-\delta'})$ which is completely transversal to all stable bundles, in particular $S(F')$. Apply the blow-up theory above to "extend" the map $[0, c-\delta']\times W_1$ to a map $[0,c+\delta']\times \widetilde {W_1}\ra f^{-1}([0,c+\delta'])$. Let  $W_2:=\widetilde{W}_1$ and continue this process until there are no more blow-ups to do. The process terminates because the manifold is compact. The Bott-Samelson resolution of $U(F)$ is $[0,\max{f}]\times W_{k}$ with the manifold with corners structure that comes by introducing new corners at the critical levels as in the discussion above.

 \begin{figure}
 \includegraphics[width=300pt]{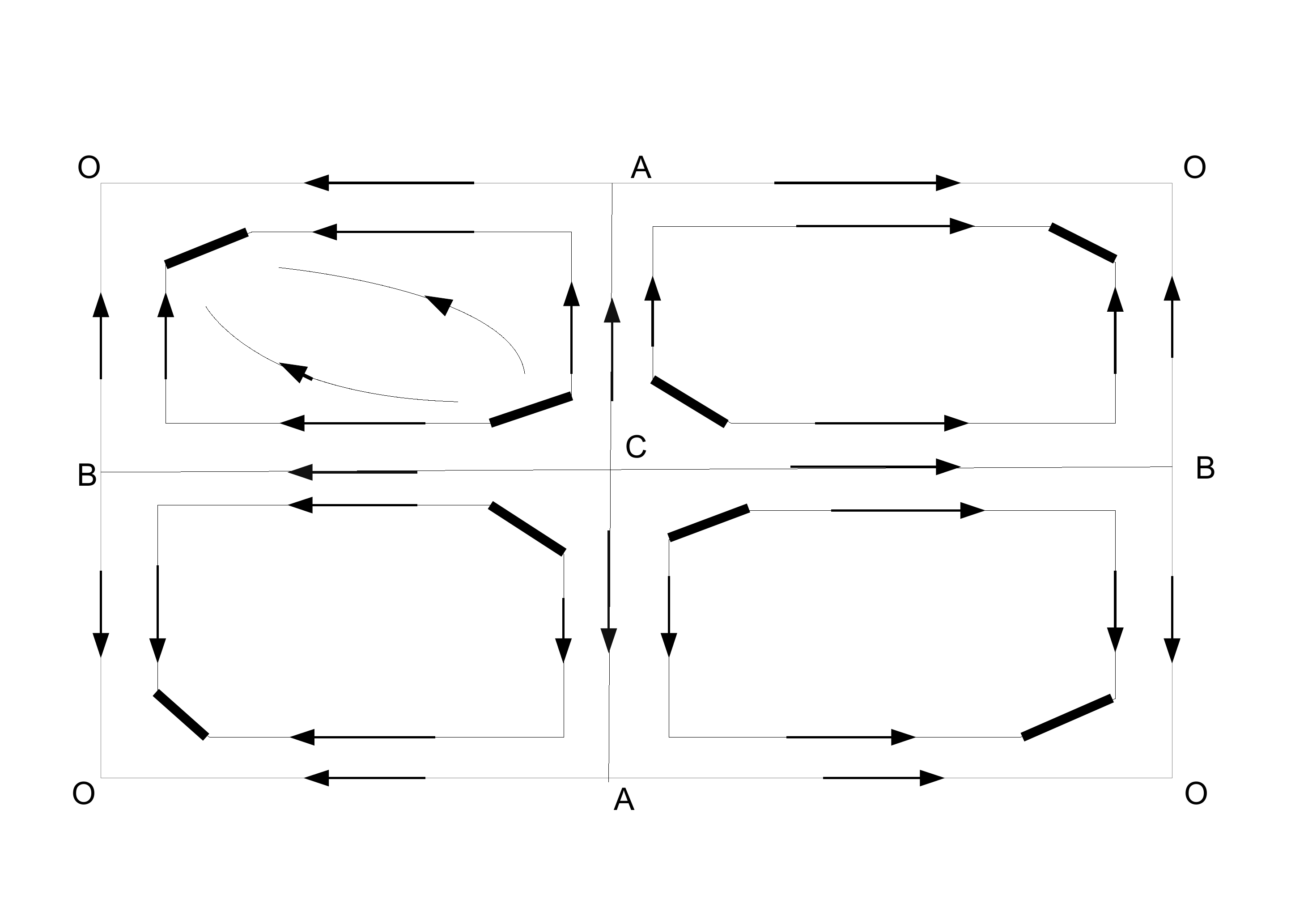}
 \caption{The  flow resolution of the standard flow on the torus with $4$ critical points is made  of a disjoint union of $4$ hexagons. The top and bottom levels of the hexagons are critical while the corners at "midlevel" are a result of the refinement of the smooth structure.}
 \end{figure}

The next result which is a reward of the discussion above gives something of a flow-resolution of a manifold endowed with a Morse-Bott-Smale function. By this we mean that there exists a manifold with corners covering the original manifold and a "linear" flow which covers the original flow.  Standard statements about the compactification of the moduli space of trajectories could, in principle, be derived out of it (compare with \cite{BFK}).  We just sketch the proof since we do not need it. 
\begin{theorem}\label{notn} Let $P$ be a compact manifold  of dimension $n$ and $f$ a Morse-Bott-Smale function on $P$ with each critical manifold lying  entirely within a fixed critical level set of $f$.  Then there exists a manifold with corners $W=\bigcup_{i=0}^{n-1} W_i$ of dimension $n-1$, a  manifold with corners structure on  $[\min{f},\max{f}] \widetilde\times W$ that refines the product structure, and a smooth surjective map $~$ $\pi:[\min{f},\max{f}] \widetilde\times W\ra P$ such that the following hold:
\begin{itemize}
\item[(a)]  each point $p$ in the corner stratum $W_i$ determines a unique broken trajectory $\alpha$ in $P$, trajectory broken exactly $i$-times between $\min{f}$ and $\max{f}$; the curve $t\ra \pi(t, p)$ gives a bijection  $[\min{f},\max{f}]\simeq \alpha$. This curve is smooth if one considers $[\min{f},\max{f}]\times\{p\}$ as a submanifold (with corners) of $[\min{f},\max{f}] \widetilde\times W$. This has the effect of introducing new corners in $[\min{f},\max{f}]$ at exactly the critical levels where  $\alpha$ breaks.
\item[(b)] the restriction of the map $\pi$ to $(\min{f},\max{f}) \times W_0$ is a diffeomorphism onto the open, dense set of $P$ which consists of all (simple) trajectories  connecting   $\min f$ to $\max{f}$. 
\item[(c)]  each connected component of $W_i$ corresponds to exactly one sequence  of $i+2$ critical manifolds (starting with one at the min level and ending with one at the max level)  which can be connected by trajectories.  
\end{itemize}
\end{theorem}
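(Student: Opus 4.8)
\emph{Sketch.} The plan is to run an iterated, ``global'' version of the single stable/unstable manifold blow-up construction of Lemma \ref{transvlemma} and Remark \ref{totblup}, regarding $P$ as the closure of the set of simple trajectories joining a minimum to a maximum of $f$, and resolving it one critical level at a time from the bottom up. First I would fix the bookkeeping. List the critical levels of $f$ as $\min f = c_0 < c_1 < \dots < c_N = \max f$ (several critical manifolds may share a level; their stable and unstable spheres are then disjoint and all blow-ups below are performed along their union, with no change to the arguments). Choose regular values $b_0,\dots,b_{N-1}$ with $c_j < b_j < c_{j+1}$, set $b_{-1}:=\min f$ and $b_N:=\max f$, so that $f^{-1}([b_{j-1},b_j])$ is a ``slab'' containing exactly the one critical level $c_j$. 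The induction on $j$ produces a manifold with corners $W^{(j)}$ of dimension $n-1$, a refinement $[\min f,b_j]\,\widetilde\times\,W^{(j)}$ of the product smooth structure, and a smooth surjection $\pi^{(j)}\colon[\min f,b_j]\,\widetilde\times\,W^{(j)}\to f^{-1}([\min f,b_j])$ with $f\circ\pi^{(j)}(t,\cdot)\equiv t$, such that $\pi^{(j)}_{b_j}:=\pi^{(j)}|_{\{b_j\}\times W^{(j)}}$ is \emph{completely transverse} (Definition \ref{comptrans}) to $S(F')\cap f^{-1}(b_j)$ for every critical manifold $F'$ with $f(F')>b_j$.

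For the base case $j=0$ I take $W^{(0)}:=\bigsqcup_{f(F)=c_0}U_{b_0}(F)$, the disjoint union of the spherical unstable bundles of the minima at the regular level $b_0$; each summand is an honest closed $(n-1)$-manifold, and the ``flow to level $t$'' map $[\min f,b_0]\times W^{(0)}\to f^{-1}([\min f,b_0])$ is onto, collapses $\{\min f\}\times W^{(0)}$ onto the minima, and is a diffeomorphism elsewhere, once one refines the product structure near $\{\min f\}\times W^{(0)}$ by the polar (square-root of the energy) blow-up; this is the degenerate one-sided case of the model $H_{-\delta,\delta}\times_{\psi}[0,\epsilon]$, makes the map smooth, and preserves $f\circ\pi^{(0)}\equiv t$. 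That $\pi^{(0)}_{b_0}$ is completely transverse to the stable spheres above is precisely the Morse--Smale condition $U(F)\pitchfork S(F')$ restricted to $f^{-1}(b_0)$. For the step from $j-1$ to $j$, I apply Lemma \ref{transvlemma} and the discussion around \eqref{apcommdiag}--\eqref{apdiag2} with the critical manifold(s) $F_j$ at level $c_j$ in the role of $F$, with $b_{j-1},b_j$ in the roles of $-\delta,\delta$, and with $\sigma:=\pi^{(j-1)}_{b_{j-1}}$ in the role of the completely transverse map: set $W^{(j)}:=\widetilde{W^{(j-1)}}$, the blow-up of $\sigma^{-1}(S(F_j)\cap f^{-1}(b_{j-1}))$ inside $W^{(j-1)}$; over the slab put $\pi^{(j)}:=\widetilde\Psi\circ\tilde\sigma$ with the refined smooth structure of \eqref{apdiag2}; and glue this to $\pi^{(j-1)}\circ(\id\times\Bl)$ over $[\min f,b_{j-1}]$, smoothness across the regular level $b_{j-1}$ being automatic since there the flow is a product. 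Lemma \ref{transvlemma} --- exactly where the Smale hypothesis is consumed --- gives that $\pi^{(j)}_{b_j}$ is again completely transverse to the stable spheres further up, so the induction continues. The last step $j=N$ is the top mirror of the base case: $F_N$ consists of maxima, and the map on $[b_{N-1},\max f]$ collapses the exceptional divisor onto them (again with a polar refinement near $\{\max f\}$). As $P$ is compact the induction is finite; set $W:=W^{(N)}$, $\pi:=\pi^{(N)}$ and let $W_i$ be the codimension-$i$ corner stratum.

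It then remains to read off (a)--(c). From $f\circ\pi(t,p)\equiv t$ each curve $t\mapsto\pi(t,p)$ is injective, and tracing the construction it is a genuine flow segment on each slab except at those interior levels $c_j$ where the stratum of $p$ lies in the $j$-th exceptional divisor, where it passes through the corresponding critical point; the refinements $H_{-\delta,\delta}\times_{\psi}[0,\epsilon]$ are precisely what make the reparametrized broken trajectory a smooth curve with a new corner at each break, which is (a). The interior stratum $W_0$ is the complement of all exceptional divisors, which under the successive blow-downs corresponds to the set of points of $\bigsqcup U_{b_0}(F)$ whose forward trajectory meets no stable sphere --- i.e. to the simple trajectories from $\min f$ to $\max f$ --- and on $(\min f,\max f)\times W_0$ every map in the chain is a diffeomorphism onto the corresponding open dense subset of $P$ (no collapsing occurs away from the two extreme levels, and blow-downs are diffeomorphisms off their exceptional sets), giving (b). Finally $W_i$ splits according to which collection of $i$ exceptional divisors a point meets, i.e. according to a chain of $i+2$ critical manifolds (one minimum, one maximum, $i$ in between) joined by trajectories; non-emptiness and connectedness of each piece follow by induction from the fact that the iterated blow-up loci are disjoint unions, indexed by such chains, of connected fibered manifolds, which is (c). The cartoon in the simplest case is the standard torus resolving into four hexagons, the top and bottom edges of each being the critical max and min, the two midlevel corners coming from the refinement at the saddle level.

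I expect the main obstacle to be bookkeeping rather than a new idea: one must carry the refined manifold-with-corners structure coherently through all $N$ gluings, verify that the assembled $\pi$ is globally smooth, check that the ``complete transversality'' input to Lemma \ref{transvlemma} is genuinely inherited at each successive regular level, and --- the most delicate, purely combinatorial point --- match the connected components of the corner strata with the admissible chains of critical manifolds in part (c). This is why only the sketch above is given.
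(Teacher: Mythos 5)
Your proposal is correct and follows essentially the same route as the paper: the paper's own (admittedly brief) sketch builds $W$ inductively by starting from the spherical unstable bundles of the minima, blowing up at each successive critical level the preimages of the stable spheres, and relying on the transversality propagation of Lemma \ref{transvlemma} and the refined corner structures of Appendix \ref{appA}, exactly as you do. Your write-up is in fact more detailed than the paper's sketch (base and top collapses, gluing across regular levels, and the reading off of (a)--(c)), and no genuine gap is introduced.
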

\begin{proof}(Sketch) The manifold with corners $W$ is obtained inductively by setting $W_0$ to be the boundary of the blow-up of the minimal critical manifolds. One has a natural smooth surjective map $\pi_0:[\min{f}, \min{f}+\epsilon]\times W_0\ra f^{-1}(\min{f}, \min{f}+\epsilon])$. Now $W_1$ is the blow-up of $W_0$ along $\pi_0^{-1}(\bigcup_F S(F)\cap f^{-1}(\min{f}+\epsilon))$ where the union is after all critical manifolds $F$  lying at the next critical level. The manifold $W_1$ is a manifold with boundary. The process goes one through induction.     
\end{proof}

\section{Residue computations in the unitary group}\label{resU}

Consider the following  submanifold of the unitary group $U(n)$:
 \[ \mathscr{U}_{\{n\}}:=\{U\in \mathscr{U}(n)~|~\dim{\Ker{(1-U)}}=n-1\}\subset U(n).\] 
 
 Let 
  $c_{n-1/2}=-\left(\frac{i}{2\pi}\right)^n\frac{[(n-1)!]^2}{(2n-1)!}\tr \wedge^{2n-1}g^{-1}dg\in \Omega^{2n-1}(U(n))$.
 The purpose of this section is to prove the following:
 \begin{prop} \[\int_{\mathscr{U}_{\{n\}}} c_{n-1/2}=1,\]
 \end{prop}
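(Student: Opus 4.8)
The plan is to pull $c_{n-1/2}$ back along the Bott--Samelson resolution from (\ref{BSres}) and to reduce the computation to two elementary integrals. Set
\[
\Phi:S^1\times\Gr(1,n)\ra U(n),\qquad \Phi(\lambda,L)=\big(\lambda\,\id_L\big)\oplus\id_{L^{\perp}} .
\]
Since $\mathscr{U}_{\{n\}}$ consists exactly of the unitaries which are $\id$ on a hyperplane and equal to some $\lambda\neq 1$ on the orthogonal line, $\Phi$ restricts to a diffeomorphism of $(S^1\setminus\{1\})\times\Gr(1,n)$ onto $\mathscr{U}_{\{n\}}$, while $\{1\}\times\Gr(1,n)$ is collapsed to $\{\id\}$. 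As $\dim\big(S^1\times\Gr(1,n)\big)=2n-1=\deg c_{n-1/2}$ and the collapsed set has $(2n-1)$-dimensional measure zero, it suffices to compute $\int_{S^1\times\Gr(1,n)}\Phi^{*}c_{n-1/2}$; this is the point at which the orientation of $\mathscr{U}_{\{n\}}$ as an unstable manifold must be pinned down, and I would fix conventions so that $\Phi$ is orientation preserving for the product orientation ``$S^1$ first''.

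Next I would compute $\Phi^{*}(g^{-1}dg)$. Writing $U=\Phi(\lambda,L)=\id+(\lambda-1)P_L$, where $P_L$ is the orthogonal projection onto $L\in\Gr(1,n)=\bC\bP^{n-1}$, one has $U^{-1}=\id+(\lambda^{-1}-1)P_L$; and with $\alpha:=P_L\,dP_L(1-P_L)$, $\beta:=(1-P_L)dP_L\,P_L$ (so $dP_L=\alpha+\beta$, because $P_L\,dP_L\,P_L=0=(1-P_L)dP_L(1-P_L)$) a direct manipulation gives
\[
\Phi^{*}(g^{-1}dg)=\lambda^{-1}d\lambda\;P_L+(1-\lambda^{-1})\,\alpha+(\lambda-1)\,\beta .
\]
Now I would exploit the nilpotency relations $\alpha\wedge\alpha=\beta\wedge\beta=0$ and $P_L\,dP_L\,P_L=0$, together with the facts that $d\lambda$ is pulled back from the one-dimensional factor $S^1$ (so it cannot occur twice) and that $\Gr(1,n)$ has real dimension $2n-2$. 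Expanding the $(2n-1)$st wedge power, the only words producing a form of top degree carry exactly one factor $\lambda^{-1}d\lambda\,P_L$ and $2n-2$ strictly alternating factors $\alpha,\beta$; using cyclicity of $\tr$ and noting that the resulting signs $(-1)^{j(2n-1-j)}$ are all $+1$ (as $2n-1$ is odd and $j\equiv j^{2}\bmod 2$), this collapses to
\[
\tr\wedge^{2n-1}(g^{-1}dg)=(2n-1)\,\lambda^{-1}d\lambda\wedge\tr\!\Big[P_L\big((1-\lambda^{-1})\alpha+(\lambda-1)\beta\big)^{\wedge(2n-2)}\Big].
\]
The inner wedge power equals $\big[(1-\lambda^{-1})(\lambda-1)\big]^{n-1}\big((\alpha\beta)^{\wedge(n-1)}+(\beta\alpha)^{\wedge(n-1)}\big)$; left multiplication by $P_L$ kills the $(\beta\alpha)$-term, and $(\alpha\beta)^{\wedge(n-1)}=(F^{\tau})^{\wedge(n-1)}$, where $F^{\tau}=P_L\,dP_L\wedge dP_L\,P_L$ is the curvature of the tautological line bundle $\tau$ on $\bC\bP^{n-1}$ with the projected connection $P_L\,d$. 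Writing $\lambda=e^{i\theta}$, so $\lambda^{-1}d\lambda=i\,d\theta$ and $(1-\lambda^{-1})(\lambda-1)=-|1-\lambda|^{2}$, we arrive at
\[
\Phi^{*}c_{n-1/2}=-\Big(\tfrac{i}{2\pi}\Big)^{\!n}\frac{[(n-1)!]^{2}}{(2n-1)!}\,(2n-1)(-1)^{n-1}\;\lambda^{-1}d\lambda\wedge|1-\lambda|^{2(n-1)}(F^{\tau})^{\wedge(n-1)} .
\]

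Finally, Fubini splits the integral into a fibre factor and a base factor. On the base, $\int_{\bC\bP^{n-1}}(F^{\tau})^{\wedge(n-1)}=(2\pi i)^{n-1}$, since $\tfrac{i}{2\pi}F^{\tau}$ represents $c_1(\tau)=-h$ with $\int_{\bC\bP^{n-1}}h^{n-1}=1$. On the circle, using $|1-e^{i\theta}|^{2}=4\sin^{2}(\theta/2)$ and Wallis' formula $\int_0^{\pi}\sin^{2m}u\,du=\tfrac{\pi}{4^{m}}\binom{2m}{m}$,
\[
\int_{S^1}|1-\lambda|^{2(n-1)}\lambda^{-1}d\lambda=i\,4^{\,n-1}\!\int_0^{2\pi}\!\sin^{2n-2}(\theta/2)\,d\theta=2\pi i\binom{2n-2}{n-1}=2\pi i\,\frac{(2n-2)!}{[(n-1)!]^{2}} .
\]
Substituting and using $(2n-1)(2n-2)!=(2n-1)!$, the factorials and $[(n-1)!]^{2}$ cancel, and
\[
\int_{\mathscr{U}_{\{n\}}}c_{n-1/2}=-(-1)^{n-1}\Big(\tfrac{i}{2\pi}\Big)^{\!n}(2\pi i)^{n}=-(-1)^{n-1}(-1)^{n}=-(-1)^{2n-1}=1 .
\]

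The main obstacle I expect is the bookkeeping in the third step: isolating precisely which non-commutative words in $\lambda^{-1}d\lambda\,P_L,\alpha,\beta$ survive the $(2n-1)$-fold wedge and the trace, and checking that every sign coming from permuting $1$-forms and from the cyclic invariance of $\tr$ collapses to $+1$. A second, logically prior delicate point is fixing the orientations of $S^1$, $\bC\bP^{n-1}$ and of $\mathscr{U}_{\{n\}}$ compatibly, so that the answer is $+1$ and not $-1$; this is exactly what the orientation discussion accompanying the computation is for.
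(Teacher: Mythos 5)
Your computation is correct, and it follows the paper's skeleton: parametrize $\mathscr{U}_{\{n\}}$ by the Bott--Samelson map $S^1\times\bC\bP^{n-1}\to U(n)$, fix the orientation by declaring this map orientation preserving, pull back $c_{n-1/2}$, and split the integral by Fubini into a circle factor times a $\bC\bP^{n-1}$ factor; I checked your identity $\Phi^*(g^{-1}dg)=\lambda^{-1}d\lambda\,P_L+(1-\lambda^{-1})\alpha+(\lambda-1)\beta$, the word-counting (exactly one $d\lambda$-letter, alternating $\alpha,\beta$, cyclic sign $+1$, factor $2n-1$), and the final arithmetic, and they all hold. Where you genuinely diverge from the paper is in the algebraic engine. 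The paper works in a chart $S^1\times\Hom(L,L^\perp)$, splits the pulled-back Maurer--Cartan form into blocks $C$ and $B$, proves by induction a matrix identity $\wedge^{2n-1}\omega=\wedge^{2n-2}B\wedge[nC+(n-1)C_1]+\wedge^{2n-1}B$ involving an auxiliary diagonal term $C_1$, and is then led to a weighted supertrace $\wstr(D^{n-1})$ which it identifies as a $U(n)$-invariant top form and evaluates at a single point against the Fubini--Study form; the circle integral is done by a Cayley transform and the residue theorem. You instead work coordinate-freely with the projector $P_L$, use $\alpha^2=\beta^2=0$, $P_L\beta=0$ and cyclicity of the trace to collapse the expansion, recognize $\alpha\wedge\beta$ as the curvature $F^{\tau}$ of the tautological line bundle with its projected connection so that the base integral becomes the Chern number $\int_{\bC\bP^{n-1}}c_1(\tau)^{n-1}=(-1)^{n-1}$, and do the circle integral by Wallis' formula. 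Your route avoids the inductive lemma and the $\wstr$ bookkeeping entirely and makes the $\bC\bP^{n-1}$ factor conceptual rather than a pointwise evaluation; its cost is the noncommutative word/sign bookkeeping inside the trace, which you flagged and which does work out exactly as you claim. The paper's route is more pedestrian but produces the explicit invariant-form evaluation (in the style of Lemma \ref{ftina}) and the $\binom{2n-2}{n-1}$ residue integral in a form it reuses for its orientation discussion.
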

 In what follows $\bC\bP^{n-1}$ is the set of \emph{lines} passing through the origin in $\bC^n$. We consider the map:
  \[\phi: S^1\times \bC\bP^{n-1}\ra U(n),\quad\quad (\lambda,L)\ra \left(\begin{array}{cc} \lambda & 0\\ 0&  1\end{array}\right),
  \]
  where the block decomposition is relative $L\oplus L^{\perp}$. The restriction of $\phi$ to $S^1\setminus\{1\}\times \bC\bP^{n-1}$ is a diffeomorphism onto $\mathscr{U}_n$ and the image of $\phi$ is $\mathscr{U}_{\{n\}}\cup\{\id_{\bC^n}\}$.

\begin{remark}\label{orient disc} When integrating forms one has to fix an orientation. The orientation we chose is the one obtained  by \emph{declaring} the above map $\phi:S^1\times\bC\bP^{n-1}\ra U(n)$ (which is a diffeomorphism onto $U_{n}$ away from a point) to be orientation preserving. The manifold $S^1\times \bC\bP^{n-1}$ has a canonical orientation.  Another natural option would be to use the projective space of hyperplanes in $\bC^{n}$ and modify the map $\phi$ accordingly. If we insist that the integral is $1$ that would have the effect of changing\footnote{The first coefficient in $c_{n-1/2}$ would be "the nicer"  $\left(\frac{1}{2\pi i}\right)^n$ instead of $-\left(\frac{i}{2\pi}\right)^n$.} $c_{n-1/2}$ by  $(-1)^{n-1}$    since the map $\bC\bP^{n-1}\ra \bC\bP^{*,n-1}$   which takes $L\ra L^{\perp}$  becomes $A\ra A^*$ when linearized.  The  convention we chose fits the computations in \cite{Ni3}, Remark 6.5).
\end{remark}

   Fix $L\in\bC\bP^{n-1}$. Written in a chart of type $S^1\times \Hom(L,L^{\perp})$ the map $\phi$ has the following expression:
  \[  (\lambda, A)\ra \left(\begin{array}{cc} \frac{\lambda+A^*A}{1 +A^*A} & (1+A^*A)^{-1}A^*(\lambda-1)\\
  (1+AA^*)^{-1}A(\lambda-1) & \frac{1+\lambda AA^*}{1+AA^*}\end{array}\right)
  \]
  The differential at a point $(\lambda, 0)$  is:
  \[ d\phi_{\lambda,L}(w,S)=\left(\begin{array}{cc}w& S^*(\lambda-1)\\
   S(\lambda-1) &0 \end{array}\right).
  \]
  Then \[\phi^{-1}(\lambda,L)\cdot d\phi_{\lambda,L}(w,S)=\left(\begin{array}{cc}  \bar{\lambda} w & S^*(1-\bar{\lambda})\\
   S(\lambda-1) & 0
   \end{array}\right). \]
   This is of course the pull-back of $g^{-1}dg$ to $S^{1}\times \bC\bP^{n-1}$ as a form with values in $\mathfrak{u}(\tau\oplus\tau^{\perp})$ where $\tau$ represents the pull-back of the tautological bundle to $S^1\times \bC\bP^{n-1}$. We write it as:
   \[ \omega =\left(\begin{array}{cc} \lambda^{-1}d\lambda & -\overline{\alpha(\lambda)} dS^*\\
    {\alpha(\lambda)}dS & 0.
   \end{array}\right).\quad\quad\quad \alpha(\lambda)=\lambda-1
   \]
  where $dS$ is the pull-back to $S^1\times\bC\bP^{n-1}$ of the $1$-form with values in the bundle $T\bC\bP^{n-1}=T^{(1,0)}\bC\bP^{n-1}=\Hom(\tau,\tau^{\perp})$ obtained by differentiating the identity on $\bC\bP^{n-1}$. Moreover $dS^*$ is the conjugate of $dS$ which can be seen as a form with values in the dual to $T^{(1,0)}\bC\bP^{n-1}$ via metric duality.
  
  We split $\omega=C+ B$ where $C=\left(\begin{array}{cc}  \lambda^{-1}d\lambda & 0\\
  0 &0.
   \end{array}\right)$ and $B=\left(\begin{array}{cc} 0 & - \overline{\alpha(\lambda)} dS^*\\
   {\alpha(\lambda)}dS & 0
   \end{array}\right).$
   
   We will also need $C_1:=\left(\begin{array}{cc} 0 &0 \\
  0 & - \lambda^{-1}d\lambda \otimes\id
   \end{array}\right).$
  The following relations are straightforward:
  \[C^2=0,\qquad C_1^2=0,\qquad B^2C=CB^2,\qquad B^2C_1=C_1B^2
  \]
  and a quick computation shows that 
  \[BCB=B^2C_1,\qquad C_1BC=0,\qquad CBC_1=0.
  \]
   \begin{lemma} The following equality holds where we set $\wedge^0 B=\id$:
 \[ \wedge^{2n-1}\omega= \wedge^{2n-2}B\wedge [n C+(n-1)C_1] +\wedge^{2n-1}B,\quad\quad\forall n\geq 1.
 \]
 \end{lemma}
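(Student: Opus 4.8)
The plan is to argue by induction on $n$, after first recording a few elementary multiplicative identities among the matrices-of-forms $B$, $C$, $C_1$. Writing everything in the $\tau\oplus\tau^{\perp}$ block form one sees that $C$ is supported in the $(1,1)$-block, $C_1$ in the $(2,2)$-block, and $B$ is purely off-diagonal; so a one-line inspection of the explicit expressions gives, besides the relations already listed, also $CC_1=C_1C=0$ and $BC=C_1B$ (equivalently $CB=BC_1$). Combining these with $C^2=C_1^2=0$ and with $B^2C=CB^2$, $B^2C_1=C_1B^2$ (whence $B^{2m}C=CB^{2m}$ and $B^{2m}C_1=C_1B^{2m}$ by iteration), one deduces the two consequences actually needed in the induction: $CBC=C(BC)=C(C_1B)=(CC_1)B=0$, and, for odd powers, $B^{2n-1}C=B^{2n-2}(BC)=B^{2n-2}C_1B=(C_1B^{2n-2})B=C_1B^{2n-1}$.

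The base case $n=1$ is the tautology $\omega=\wedge^0B\wedge C+\wedge^1B=C+B$. For the inductive step I would use that, since $\omega=C+B$ and $C^2=0$, one has $\wedge^2\omega=CB+BC+B^2$, so $\wedge^{2n+1}\omega=(\wedge^{2n-1}\omega)\wedge(CB+BC+B^2)$, and then substitute the formula for $\wedge^{2n-1}\omega$. In $\wedge^{2n-2}B\wedge(nC+(n-1)C_1)\wedge(CB+BC+B^2)$ the $CB$- and $BC$-terms vanish (using $C^2=0$, $C_1C=0$, $CBC=0$, $C_1BC=0$) and the $B^2$-term contributes $n\,\wedge^{2n}B\wedge C+(n-1)\,\wedge^{2n}B\wedge C_1$; in $\wedge^{2n-1}B\wedge(CB+BC+B^2)$ the $B^2$-term gives $\wedge^{2n+1}B$, the $BC$-term gives $\wedge^{2n}B\wedge C$, and the $CB$-term, rewritten via $B^{2n-1}C=C_1B^{2n-1}$ and $C_1B^{2n}=B^{2n}C_1$, gives $\wedge^{2n}B\wedge C_1$. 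Adding all five surviving pieces yields $\wedge^{2n+1}\omega=\wedge^{2n}B\wedge\bigl((n+1)C+nC_1\bigr)+\wedge^{2n+1}B$, which is exactly the claimed identity at level $n+1$.

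Conceptually — and this is what makes the coefficients $n$ and $n-1$ transparent — one can instead note directly that $\wedge^{2n-1}\omega$ is the sum over all length-$(2n-1)$ words in the letters $C,B$, that the relations above kill every word containing two letters of ``$C$-type'', and that a surviving word $B^aCB^b$ with $a+b=2n-2$ collapses to $B^{2n-2}C$ when $b$ is even and to $B^{2n-2}C_1$ when $b$ is odd; there are $n$ even and $n-1$ odd values of $b$, while the single word $B^{2n-1}$ supplies the last term. Either way, the only real work is the bookkeeping: tracking which ``diagonal'' piece each word reduces to and counting multiplicities; every algebraic step is one of the handful of block-matrix identities collected at the start, so there is no genuine obstacle beyond organizing the combinatorics cleanly.
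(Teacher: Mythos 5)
Your proof is correct and its main argument is essentially the paper's own: induction on $n$ using $\wedge^2\omega=CB+BC+B^2$, killing all words with two letters of $C$-type and commuting the surviving $C$ past even powers of $B$; the extra relations you invoke, $CC_1=C_1C=0$ and $BC=C_1B$, $CB=BC_1$, are indeed valid (the minus sign in $C_1$ exactly offsets the anticommutation of the $1$-form entries) and immediately yield the relations $BCB=B^2C_1$, $C_1BC=0$, $CBC_1=0$ that the paper uses. Your closing word-counting remark is just a pleasant non-inductive repackaging of the same algebra, making the coefficients $n$ and $n-1$ visible as the number of even and odd positions of the single $C$.
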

 \begin{proof} We prove this by induction. For $n=1$ this is clearly true. Notice that $\wedge^2\omega= CB+BC +B^2$. It is quite easy to see that the monomials that contain $C$ twice or one $C$ and one $C_1$ vanish. Instead of writing $\wedge^{2n-1}\omega$, we write $\omega^{2n-1}$. We have due to the relations above
 \[\omega^{2n-1}=\omega^{2n-3}\wedge \omega^2=[B^{2n-4}((n-1)C+(n-2)C_1)+B^{2n-3}]
(CB+BC +B^2)=
 \]
 \[ =B^{2n-4}((n-1)C+(n-2)C_1)B^2+B^{2n-3} CB+B^{2n-2}C+B^{2n-1}=
 \]
 \[B^{2n-2}((n-1)C+(n-2)C_1)+B^{2n-2}C_1+B^{2n-2}C+B^{2n-1}=B^{2n-2}(nC+(n-1)C_1)+B^{2n-1}.
 \]
 \end{proof}
Since $B^{2n-1}$ is block anti diagonal we have due to the lemma:
 \[\tr {\wedge^{2n-1}\omega}=n\tr (B^{2n-2}\wedge C)+(n-1)\tr (B^{2n-2}\wedge C_1).
 \]
  We write this as 
  \begin{equation}\label{comp2} \tr{\wedge^{2n-1}\omega}=(-1)^{n-1}|\alpha(\lambda)|^{2n-2}(\lambda^{-1}d\lambda)\cdot \wstr{(D^{n-1})},\end{equation} where
   \[D=\left(\begin{array}{cc} dS^*\wedge dS&0 \\
   0& dS\wedge dS^*   \end{array}\right)\]and\[ \wstr{T}=n\tr{T_2}-(n-1)\tr{T_1}\qquad \forall T=\left(\begin{array}{cc} T_1 &0 \\ 0 & T_2\end{array}\right).\]

We use  the Cayley transform that preserves the orientation $t\ra \frac{t-i}{t+i}$ to turn the integral
 \[ \int_{S^1}(-1)^{n-1}|\alpha(\lambda)|^{2n-2}\lambda^{-1}d\lambda=(-1)^{n-1}2^{n-1}\int_{S^1}(1-\Real \lambda)^{n-1}\lambda^{-1}d\lambda.
 \]
into  the integral
 \[ 2^{2n-1}(-1)^{n-1}i \int_{\bR}\frac{1}{(1+t^2)^n}~dt
 \]
which we compute  with the help of the Residue Theorem,  therefore obtaining
 \begin{equation} \label{comp3} \int_{S^1}(-1)^{n-1}|\alpha(\lambda)|^{2n-2}\lambda^{-1}d\lambda=2^{2n-1}(-1)^{n-1}i\int_{\bR}\frac{1}{(1+t^2)^n}~dt= (-1)^{n-1}2\pi i{2n-2\choose n-1}.
 \end{equation}
 
In order to compute $\displaystyle\int_{\bC\bP^{n-1}}\wstr{(\wedge^{n-1} D)}$ we remark that  by its very definition $D$ satisfies:
\[ D_{UL}=\ad_{U}D_L\quad\quad\forall U\in U(n),
\]
which implies that $\wstr{(\wedge^{n-1} D)}$ is actually an $U(n)$ invariant form of maximum degree in $\bC\bP^{n-1}$. Therefore it has to be a constant multiple of the volume form on $\bC\bP^{n-1}$. To determine this multiple fix a point $L_0=[1:0:\ldots :0]\in \bC\bP^{n-1}$ and denote  by $dz_1,d\bar{z_1},\ldots, dz_{n-1}, d\bar{z}_{n-1}$ the canonical $1$-forms which form a basis in the dual space to $T^{(1,0)}_{L_0}\bC\bP^{n-1}\oplus T^{(0,1)}_{L_0}\bC\bP^{n-1}$. Then
\[ dS_{L_0}=\left(\begin{array}{c} dz_1 \\ dz_2\\ \ldots \\ dz_{n-1}\end{array}\right)\quad\quad dS^*_{L_0}=\left(d\overline{z}_1, d\overline{z}_2,\ldots, d\overline{z}_{n-1} \right)
\]
Now $dS^*_0\wedge dS_0=\sum_{i=1}^nd\overline{z_i}\wedge dz_i$ and hence the top-left entry of of $\wedge^{n-1}D$ is
\begin{equation}\label{comp5} (n-1)!(-1)^{n-1} dz_1\wedge d\overline{z_1}\wedge\ldots \wedge dz_{n-1}\wedge d\overline{z}_{n-1}.
\end{equation} 
On the other hand for the matrix of two forms $(dS_{L_0}\wedge dS_{L_0}^*)_{ij}=dz_i\wedge d\bar{z}_j$ we have the following:
\begin{lemma}\label{comp6}\[(dS_{L_0}\wedge dS_{L_0}^*)^{n-1}=(n-2)!(-1)^{n-2}dz_1\wedge d\overline{z}_1\wedge\ldots \wedge dz_{n-1}\wedge d\overline{z}_{n-1}\otimes \id.\]
\end{lemma}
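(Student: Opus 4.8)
The statement to be proved is Lemma \ref{comp6}: that for the matrix of $2$-forms $N:=dS_{L_0}\wedge dS_{L_0}^*$ with entries $N_{ij}=dz_i\wedge d\bar z_j$ ($1\le i,j\le n-1$), one has
\[
N^{n-1}=(n-2)!\,(-1)^{n-2}\,dz_1\wedge d\bar z_1\wedge\cdots\wedge dz_{n-1}\wedge d\bar z_{n-1}\otimes\id.
\]
The natural approach is a direct computation of the $(n-1)$-st wedge power of a rank-one matrix of $2$-forms. Observe that $N=v\wedge v^*$ where $v$ is the column vector of $1$-forms $v=(dz_1,\dots,dz_{n-1})^{T}$ and $v^*=(d\bar z_1,\dots,d\bar z_{n-1})$ is the row vector; so $N_{ij}=dz_i\wedge d\bar z_j$. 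The $(k,l)$-entry of $N^{\wedge m}$ (the $m$-fold matrix wedge product) is $\sum_{i_1,\dots,i_{m-1}} dz_k\wedge d\bar z_{i_1}\wedge dz_{i_1}\wedge d\bar z_{i_2}\wedge dz_{i_2}\wedge\cdots\wedge d\bar z_{i_{m-1}}\wedge dz_{i_{m-1}}\wedge d\bar z_l$, where the internal sum runs over all ordered tuples. Since each $2$-form $dz_i\wedge d\bar z_i$ commutes with every other and squares to zero, only tuples $(i_1,\dots,i_{m-1})$ with distinct entries survive, and there are $(m-1)!$ orderings of each $(m-1)$-element subset.

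The key steps, in order: First I would set up the combinatorial identity above carefully, tracking signs. The relevant sign bookkeeping comes from reordering $d\bar z_{i}\wedge dz_{i}$ pairs into the standard order $dz_i\wedge d\bar z_i$; each such swap contributes $-1$, and since the product $dz_k\wedge d\bar z_{i_1}\wedge dz_{i_1}\wedge\cdots$ contains $m-1$ such "crossed" blocks in the interior, one gets a factor $(-1)^{m-1}$ (or a related power one must compute precisely). Second, I would take $m=n-1$ and evaluate the diagonal entry $(k,k)$: the surviving subsets must be exactly $\{1,\dots,n-1\}\setminus\{k\}$ once we also account for the boundary factors $dz_k$ and $d\bar z_k$, and there are $(n-2)!$ orderings, yielding the claimed factor $(n-2)!\,(-1)^{n-2}$ times the top volume form $dz_1\wedge d\bar z_1\wedge\cdots\wedge dz_{n-1}\wedge d\bar z_{n-1}$ (after reordering into standard form). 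Third, I would check that all off-diagonal entries $(k,l)$ with $k\ne l$ vanish: such a term would be $dz_k\wedge(\cdots)\wedge d\bar z_l$ where the interior must use $n-2$ distinct indices; but then either $dz_k$ or $d\bar z_l$ collides with an interior factor $dz_{i}\wedge d\bar z_i$ unless the interior set is $\{1,\dots,n-1\}\setminus\{k,l\}$ — but that set has only $n-3$ elements, one short, so the top-degree form cannot be filled and the entry is forced to be a form of degree $2(n-1)$ in only $n-2$ of the variables, hence zero. This confirms $N^{n-1}$ is a scalar multiple of $\id$ as a matrix of top-degree forms, with the scalar being the diagonal value computed in step two.

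A cleaner alternative, which I would mention as a sanity check, is to diagonalize formally: although $dz_i\wedge d\bar z_j$ are not numbers, one can use the identity $\det(\id + t\,N) = 1 + t\sum_i N_{ii}$ (valid because $N$ has "rank one" in the graded sense, i.e. $N^2$ has a single surviving term pattern), combined with the observation that $N^{n-1}$ is determined by its trace via the rank-one structure; comparing with the known top-left entry computation \eqref{comp5}, already established in the paper, pins down the constant without re-deriving all signs.

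\textbf{Expected main obstacle.} The one genuinely delicate point is the sign. Reordering the interleaved product $dz_k\wedge d\bar z_{i_1}\wedge dz_{i_1}\wedge\cdots\wedge d\bar z_{i_{n-2}}\wedge dz_{i_{n-2}}\wedge d\bar z_k$ into the canonical form $dz_1\wedge d\bar z_1\wedge\cdots\wedge dz_{n-1}\wedge d\bar z_{n-1}$ requires counting transpositions of $2$-forms (which commute) versus transpositions that move a $dz$ past a $d\bar z$, and one must verify the total parity is $(-1)^{n-2}$ uniformly over all orderings and all choices of $k$. I would handle this by first normalizing each surviving monomial to have its $d\bar z_i\wedge dz_i$ blocks flipped to $dz_i\wedge d\bar z_i$ (contributing a controlled sign), then observing that the $(n-1)$ resulting commuting blocks can be freely permuted into increasing order of index at no further sign cost — making the parity computation transparent. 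The comparison with the already-proven formula \eqref{comp5} for the top-left entry of $\wedge^{n-1}D$ provides a useful consistency check on the final constant.
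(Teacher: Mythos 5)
Your main argument is correct and is essentially the paper's own proof: expand the $(i,j)$ entry of the $(n-1)$-fold product as $\sum a_{ik_1}a_{k_1k_2}\cdots a_{k_{n-2}j}$, observe that only diagonal entries with distinct interior indices survive (the off-diagonal entries would need $n-2$ distinct indices avoiding both $k$ and $l$, which is impossible), count the $(n-2)!$ orderings, and track the sign $(-1)^{n-2}$ from flipping the interior blocks $d\bar z_i\wedge dz_i$ --- indeed you carry out the sign bookkeeping that the paper dismisses with ``the sign requires a bit of care''. One caution: your ``cleaner alternative'' sanity check is not valid, since for matrices of $2$-forms the rank-one identity $\det(\id+tN)=1+t\sum_i N_{ii}$ fails (already for $n-1=2$ the $t^2$-coefficient equals $2\,dz_1\wedge d\bar z_1\wedge dz_2\wedge d\bar z_2\neq 0$, consistent with $\det[dz_i\wedge d\bar z_j]=n!\prod_i dz_i\wedge d\bar z_i$ in the computation following Lemma \ref{ftina}); but as you present it only as an aside, the proof itself stands.
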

\begin{proof} The $ij$ entry of $(dS_{L_0}\wedge dS_{L_0}^*)^{n-1}$ is a huge sum:
\[ \sum_{k_1,\ldots, k_{n-2}} a_{ik_1}a_{k_1k_2}\ldots a_{k_{n-2}j},
\]
where $a_{lp}$ is an entry of $dS_{L_0}\wedge dS_{L_0}^*$. It is then straightforward that $(dS_{L_0}\wedge dS_{L_0}^*)^{n-1}$ is diagonal and every diagonal entry is up to a sign $(n-2)!dz_1\wedge d\bar{z}_{1}\wedge\ldots\wedge  dz_{n-1}\wedge d\bar{z}_{n-1}$. The sign requires a bit of care.
\end{proof}
We therefore get from Lemma \ref{comp6} and (\ref{comp5}) that
  \[ \wstr{D^{n-1}_{L_0}}=(-1)^{n-1}(2n-1) (n-1)!dz_1\wedge d\overline{z}_1\wedge\ldots \wedge dz_{n-1}\wedge d\overline{z}_{n-1}.
 \]
Recall now that the standard Kahler form on $\bC\bP^{n-1}$  at the point $L_0$ (see \cite{GH}, page 31) is:
\[ \eta:=\frac{i}{2\pi}\sum_{j=1}^{n-1}dz_j\wedge d\bar{z}_j
\]
and 
\[ \int_{\bC\bP^{n-1}}\eta^{\wedge n-1}=1.
\]
Due to the fact that we are dealing with invariant forms we get that the identity
 \[\wstr{\wedge^{n-1}D}=(-1)^{n-1}(2n-1)\left(\frac{2\pi}{i}\right)^{n-1}\eta^{\wedge n-1}\]  holds everywhere and therefore:
\begin{equation}\label{comp4}\int_{\bC\bP^{n-1}} \wstr{\wedge^{n-1}D}=(-1)^{n-1}(2n-1)\left(\frac{2\pi}{i}\right)^{n-1}.
\end{equation}
Putting together Fubini Theorem together with (\ref{comp2}), (\ref{comp3}) and (\ref{comp4}) finishes the proof of the proposition.
\end{appendix}

\end{document}